\def\A{\mathbb A}
\def\C{\mathbb C}
\def\F{\mathbb F}
\def\k{\mathbb k}
\def\L{\mathbb L}
\def\P{\mathbb P}
\def\bP{\mathbb P}
\def\Q{\mathbb Q}
\def\Z{\mathbb Z}
\def\OO{\mathcal O}
\def\XX{\mathcal X}
\def\chr{{\operatorname{char}}}
\def\Ker{{\operatorname{Ker}}}
\def\Image{{\operatorname{Im}}}
\def\Spec{{\operatorname{Spec}}}
\def\ord{{\operatorname{ord}}}
\def\Pic{{\operatorname{Pic}}}
\def\NS{{\operatorname{NS}}}
\def\Bl{{\operatorname{Bl}}}
\def\Gr{{\operatorname{Gr}}}
\def\PGL{{\operatorname{PGL}}}
\def\codim{{\operatorname{codim}}}
\def\Aut{{\operatorname{Aut}}}
\def\rk{{\operatorname{rk}}}
\def\chr{{\operatorname{char}}}
\def\Bir{{\operatorname{Bir}}}
\def\StBir{{\operatorname{StBir}}}
\def\Cr{{\operatorname{Cr}}}
\def\Rep{{\operatorname{Rep}}}
\def\Ex{{\operatorname{Ex}}}
\def\Gal{{\operatorname{Gal}}}
\def\Jac{{\operatorname{J}}}
\def\tc{\widetilde{c}}
\def\ol{\overline}
\def\Kt{{\mathrm{K}}}
\def\Id{{\mathrm{Id}}}
\def\OG{{\mathrm{OG}}}
\def\PPAV{{\mathrm{PPAV}}}
\def\Var{{\mathrm{Var}}}
\def\ab{{\mathrm{ab}}}
\theoremstyle{plain}
\newtheorem{dummy}{dummy}[section]
\newtheorem{theorem}[dummy]{Theorem}
\newtheorem{proposition}[dummy]{Proposition}
\newtheorem{lemma}[dummy]{Lemma}
\newtheorem{lem}[dummy]{Lemma}
\newtheorem{corollary}[dummy]{Corollary}
\newtheorem{cor}[dummy]{Corollary}
\newtheorem{example}[dummy]{Example}
\newtheorem{definition}[dummy]{Definition}
\numberwithin{equation}{section}
\theoremstyle{definition}
\newtheorem{Def}[dummy]{Definition}
\newtheorem{remark}[dummy]{Remark}
\newcommand{\ssec}{\subsection}
\newcommand{\ti}[1]{\tilde{#1}}
\newcommand{\ul}{\underline}
\newcommand{\vast}{\bBigg@{4}}
\newcommand{\Vast}{\bBigg@{5}}
\newcommand{\cD}{\mathcal{D}}
\newcommand{\cO}{\mathcal{O}}
\newcommand{\cU}{\mathcal{U}}
\newcommand{\cX}{\mathcal{X}}
\newcommand{\gS}{\Sigma}
\newcommand{\gb}{\beta}
\newcommand{\go}{\omega}
\newcommand{\gs}{\sigma}
\newcommand{\Frac}{\mathrm{Frac}}
\newcommand{\psreg}{\mathrm{p-reg}}
\newcommand{\sep}{\mathrm{sep}}
\newcommand{\Sym}{\mathrm{Sym}}
\newcommand{\X}{\mathrm{X}}
\newcommand{\bss}{\backslash}
\newcommand{\cnec}{\mathrel{:=}}
\newcommand{\dto}{\dashrightarrow}
\newcommand{\hto}{\hookrightarrow}
\newcommand{\hlto}{\hookleftarrow}
\newcommand{\xto}[1]{\xrightarrow{ #1 }}
\title{Motivic invariants of birational maps}
\author{Hsueh-Yung Lin, Evgeny Shinder}
\address{Department of Mathematics, National Taiwan University, 
	No. 1, Sec. 4, Roosevelt Rd., Taipei 10617, Taiwan.}
\email{hsuehyunglin@ntu.edu.tw}
\address{School of Mathematics and Statistics, University of Sheffield,
Hounsfield Road, S3 7RH, UK, and
Hausdorff Center for Mathematics
at the University of Bonn, Endenicher Allee 60, 53115.}
\email{eugene.shinder@gmail.com}
\subjclass[2010]{
14E07, 
19A99. 
}
\keywords{Birational maps, Cremona groups, Grothendieck rings of varieties}
\begin{document}

\raggedbottom

\maketitle

\begin{abstract}
We construct invariants of birational maps 
with values in the Kontsevich--Tschinkel group
and in the truncated Grothendieck groups of varieties.
These invariants are morphisms of groupoids and are
well-suited 
to investigating the structure
of the Grothendieck ring and L-equivalence.
Building on
known constructions
of L-equivalence, we prove new unexpected
results about
Cremona groups.
\end{abstract}

\setcounter{tocdepth}{1}
\tableofcontents 

\section{Introduction}

Let $\k$ be a field.
The Cremona groups $\Cr_n(\k) = \Bir(\P^n_{\k})$
have been actively studied in birational geometry since
the 19th century.
The aim of this work is to contribute to the study of Cremona groups
through a motivic viewpoint 
and bring new methods and results to light.

\ssec{Generating sets of Cremona groups}
\hfill

We address the problem
about the existence of simple generating sets of Cremona groups.
For the Cremona group of complex projective plane,
it has a well-known generating set due to
M. Noether and Castelnuovo.

\begin{theorem}[Noether--Castelnuovo~{\cite{zbMATH02722071,CastelnuovoCr2}}]\label{thm:noether}
For any algebraically closed field $\k$,
the Cremona group $\Cr_2(\k)$ is generated by $\PGL_3(\k)$
and the Cremona involution 
\[[X:Y:Z] \mapsto [YZ:XZ:XY].\]
\end{theorem}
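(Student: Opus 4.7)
The plan is to argue by descent on the algebraic degree of a birational self-map of $\P^2$. Given $\varphi \in \Cr_2(\k)$, let $|\MM|$ be the pullback by $\varphi$ of the linear system of lines on $\P^2$; this is a homaloidal net of curves of some degree $d \geq 1$ with assigned base points $p_1, \dots, p_r$ (possibly infinitely near) of multiplicities $m_1 \geq m_2 \geq \dots \geq m_r$. When $d = 1$ the map already lies in $\PGL_3(\k)$, so it suffices to prove that whenever $d > 1$ one can compose $\varphi$ on the source with an element of $\PGL_3(\k)$ and with the standard quadratic involution $\sigma \colon [X{:}Y{:}Z] \mapsto [YZ{:}XZ{:}XY]$ so that the resulting map has linear system of strictly smaller degree.

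The first step is to record the two classical Noether equalities
\[
\sum_i m_i = 3(d-1), \qquad \sum_i m_i^2 = d^2 - 1,
\]
which follow from the fact that two general members of $|\MM|$ intersect in a single free point, together with the adjunction/genus formula applied on a resolution of indeterminacy. The second and crucial step is Noether's inequality: among the base points there exist three indices $i_1,i_2,i_3$ with $m_{i_1}+m_{i_2}+m_{i_3} > d$. Combining the two equalities (e.g.\ by an averaging argument applied to $m_1 + m_2 + m_3 \geq 3\sum m_i^2/\sum m_i$) forces such a triple to exist as soon as $d \geq 2$.

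The third step is to use three such base points to reduce the degree. If the three chosen points are proper and in general position, choose $L \in \PGL_3(\k)$ carrying them to the three coordinate points, and replace $\varphi$ by $\varphi \circ L^{-1} \circ \sigma$. A direct computation on the blow-up of the coordinate points (on which $\sigma$ becomes a regular automorphism exchanging the three exceptional divisors with the three strict transforms of the coordinate lines) shows that the new linear system has degree
\[
d' = 2d - m_{i_1} - m_{i_2} - m_{i_3} < d,
\]
so descent on $d$ terminates with a map in $\PGL_3(\k)$, producing the desired factorization.

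The main obstacle, and historically the reason Noether's original 1870 argument was incomplete, is that the triple furnished by Noether's inequality need not consist of proper points in general position: the $p_i$ of largest multiplicity can be infinitely near one another or collinear, in which case the naive quadratic transformation is not available. Castelnuovo's contribution is precisely to remove this obstruction, by analyzing chains of infinitely near base points and performing preparatory elementary links (equivalently, elementary transformations between Hirzebruch surfaces, or, in modern language, a Sarkisov-program analysis on a weak del Pezzo model) to reduce to the generic configuration before applying $\sigma$. Carrying out this case analysis carefully is the delicate part of the argument; with it in hand the induction on $d$ goes through and yields the stated generation.
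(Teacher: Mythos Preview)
The paper does not contain a proof of this theorem: it is quoted in the introduction as the classical Noether--Castelnuovo result, with references to the original sources, and is used only as historical motivation for the questions studied in the paper. There is therefore no ``paper's own proof'' to compare your proposal against.

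That said, your sketch follows the classical line (descent on the degree of the homaloidal net via Noether's equalities, Noether's inequality, and the standard quadratic transformation, with Castelnuovo's case analysis handling infinitely near or collinear base points). Two comments. First, the averaging step you indicate, namely $m_1+m_2+m_3 \ge 3\sum m_i^2/\sum m_i$, is not a general inequality about decreasing sequences of nonnegative integers; you should instead combine the two Noether relations directly, for instance via $\sum_i m_i(3m_i-(d+1))=0$, to force some $m_i>(d+1)/3$ and then bootstrap. Second, the final paragraph is the heart of the matter and you have only named it rather than carried it out: what actually makes the proof nontrivial is producing, after preparatory transformations, three \emph{proper non-collinear} base points with multiplicity sum exceeding $d$, and this requires a genuine induction on a more refined invariant than $d$ alone. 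As written, your proposal is a correct high-level outline of the classical argument but not yet a proof.
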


When $n \ge 3$, the Cremona groups $\Cr_n(\k)$ 
are much less well understood. 
In terms of the size of generating sets, Hudson and Pan showed that 
for an algebraically closed field $\k$ of characteristic zero,
$\Cr_n(\k)$ is never generated by $\PGL_{n+1}(\k)$
together with any set of transformations of bounded degree 
or countably many elements
when $\k$ is uncountable~\cite{HudsonPan}.
It is therefore an intricate
question in higher dimension whether there exist,
group-theoretically or geometrically,
simple transformations
which generate the entire $\Cr_n(\k)$.
See e.g.~\cite[1.C]{BLZ} for a historical account of this problem,
which has been already mentioned in the 1895 
lectures by Enriques.
Currently, no explicit set of generators
of Cremona groups in dimension $n \ge 3$ is known.

Among the birational automorphisms,
regularizable maps form 
one of the most explicit classes
of elements of $\Cr_n(\k)$.
Recall that an element $\phi \in \Cr_n(\k)$
is called regularizable
if there exists a birational map 
$\alpha: \P^n \dashrightarrow X$
to some variety $X$ 
such that $\alpha \circ \phi \circ \alpha^{-1} \in \Bir(X)$ is a regular automorphism.
See Definition~\ref{Def-reg} for 
a more general definition of 
pseudo-regularizable maps. 
Regular automorphisms of $\P^n$ are regularizable,
and so are elements of $\Aut(U) \subset \Cr_n(k)$
for every open $U \subset \P^n$.
Furthermore, all
elements of $\Cr_n(\k)$ of
finite order are regularizable
by Weil's regularization theorem (see e.g.~\cite[Theorem 1.7]{CheltsovCremona}).
Thus Theorem \ref{thm:noether} implies that $\Cr_2(\C)$
is generated by regularizable elements.

In higher dimensions, our main result shows that
the contrary holds in most situations and accordingly, 
any set of generators
of $\Cr_n(\k)$
is necessarily quite complicated.

\begin{theorem}\label{thm:main}
In each of
the following cases,
the Cremona group
$\Cr_n(\k)$ is not generated by pseudo-regularizable elements
\textup(which include $\PGL_{n+1}(\k)$
and all 
elements of finite order\textup):
\begin{enumerate}
    \item $n = 3$, and all number fields $\k$, or all function fields $\k$ 
    over a number field, over a finite field or over an algebraically closed field; 
    \item $n \ge 4$, and all fields $\k \subset \C$;
    \item $n \ge 5$, and all infinite fields $\k$.
\end{enumerate}
\end{theorem}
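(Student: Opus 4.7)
\medskip
\noindent\textbf{Proof proposal.}
The plan is to use the motivic invariants of birational maps constructed earlier in the paper, taking values either in the Kontsevich--Tschinkel group $\Burn_{n-1}(\k)$ or in a suitable truncated Grothendieck group of varieties. Each such invariant $c$ is, by construction, a morphism of groupoids from the birational groupoid to an abelian group $A$, and in particular it induces a group homomorphism $c\colon \Cr_n(\k) \to A$. The crucial observation is that $c$ vanishes on every pseudo-regularizable $\phi$: writing $\phi$ as a birational conjugate of a (pseudo-)automorphism $f$ of some model, the groupoid property gives $c(\phi) = c(f)$, and $c(f)=0$ because $c$ is set up to vanish on maps that are isomorphisms in codimension one (no divisorial exceptional locus is created or destroyed). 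Hence the subgroup $G \le \Cr_n(\k)$ generated by pseudo-regularizable elements is contained in $\ker c$, and the theorem reduces in each case to exhibiting a single $\phi \in \Cr_n(\k)$ with $c(\phi) \neq 0$.

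To produce such a $\phi$, I would exploit pairs $(X,Y)$ of smooth projective varieties of dimension at most $n-1$ that share the same class in the target group $A$ (e.g.\ are L-equivalent) but are \emph{not} birational over $\k$. One converts such a pair into a birational self-map of $\P^n_{\k}$ by a Sarkisov-type construction realising $X$ and $Y$ as opposite centres of a birational link factoring through $\P^n$; the invariant of the resulting $\phi$ then unwinds to a nonzero class of the form $[\k(X)] - [\k(Y)]$ in the Burnside case, respectively $[X] - [Y]$ in the truncated Grothendieck case. Known L-equivalence constructions supply the geometric input: Borisov--C\u{a}ld\u{a}raru Pfaffian--Grassmannian Calabi--Yau threefolds and their variants for case (2); spreads and specialisations of the same for case (3); and surface-theoretic or arithmetic L-equivalences (e.g.\ del Pezzo or Enriques pairs with nontrivial Galois or Brauer behaviour) for case (1).

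The main obstacle is case (1): in dimension three one must locate L-equivalent but non-birational surfaces over a number field or a function field, realise them as opposite centres of a link that genuinely lies in $\Cr_3(\k)$ rather than in some other rational threefold, and confirm that the invariant remains nonzero after restricting to $\k$ (descent and field-of-definition issues). A secondary obstacle, relevant for case (3), is ensuring that the $\C$-level constructions spread and specialise over an arbitrary infinite field without losing non-birationality, which should follow from a Lefschetz-principle-type argument combined with compatibility of $c$ with base change. Once these geometric inputs are in place, the deduction that $\Cr_n(\k) \neq G$ is formal.
\medskip
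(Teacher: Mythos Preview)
Your high-level strategy is exactly the paper's: $c$ is a groupoid homomorphism, it vanishes on pseudo-regularizable elements (because $c(\alpha^{-1}\gamma\alpha)=c(\gamma)=0$ when $\gamma$ is a pseudo-isomorphism), and one finishes by exhibiting a single $\phi\in\Cr_n(\k)$ with $c(\phi)\neq 0$. The reduction is correct and this is precisely Lemma~\ref{lem:regularizable} and the shape of the proof of Theorem~\ref{thm:bir-generation}.

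The gaps are in the geometric input you propose for each case. First, in case (1) with $n=3$, blow-up centres must have codimension $\geq 2$, so they are \emph{curves} (or points), and the exceptional divisors are \emph{ruled} surfaces $\P^1\times C$; del Pezzo or Enriques surfaces cannot arise this way. The paper instead uses genus one curves: over the fields listed there exist degree-$5$ genus one curves $C$ with $C\not\simeq C':=\Jac^2(C)$, and the Crauder--Katz/Mori--Mukai link $Q^3\dashrightarrow\P^3$ realises $C$ and $C'$ as opposite centres (Proposition~\ref{prop:elliptic}, Theorem~\ref{thm:ell-curves}). The arithmetic content is the existence of such torsors (Lemma~\ref{lem:torsors}), not Brauer-twisted surfaces. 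Second, for case (2) with $n=4$, Calabi--Yau \emph{threefolds} cannot be codimension-$2$ centres in a fourfold; the paper uses Hassett--Lai's K3 surfaces of degree $12$ embedded as singular surfaces in $\P^4$ (Theorem~\ref{thm:K3}), with a Terasoma-type argument to produce Picard-rank-one examples over $\Q$. Third, for case (3) the paper does not specialise the characteristic-zero Pfaffian--Grassmannian story; it uses the Ito--Miura--Okawa--Ueda $G_2$-Grassmannian link in dimension $5$, checks it works over any infinite field via the integral model of $G_2/P$, and proves non-stable-birationality of the Calabi--Yau centres in positive characteristic directly (Theorem~\ref{thm:CY3}, Lemma~\ref{lem-nSBG2}, and Appendix~\ref{app:MRC}).

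One more point of phrasing: you write that the centres $X,Y$ should ``share the same class in the target group $A$''. In fact the opposite is needed: one wants $[\P^{m-1}\times X]\neq[\P^{m'-1}\times Y]$ in $\Z[\Bir_{n-1}/\k]$, i.e.\ $X$ and $Y$ not stably birational. Their L-equivalence in $\Kt_0(\Var/\k)$ is a \emph{consequence} of the L-link (Lemma~\ref{lemma:L-link}(1)), not the hypothesis that makes $c(\phi)\neq 0$.
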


We refer to Theorem \ref{thm:bir-generation} for a more general statement regarding $\Bir(X)$
for classes of varieties $X$ birational to 
$\P^n \times W$ where $W$ is a (separably)
rationally connected variety.
Theorem \ref{thm:main}
disproves a conjecture by Cheltsov in 2004,
which says that 
birational automorphism groups are generated by regularizable
elements \cite[Conjecture 1.12]{CheltsovCremona}.
It also
gives a negative answer to
a more specific
question of Dolgachev \cite[Question on p.1]{Deserti} on whether Cremona groups
are generated by involutions.
We will explain below why certain
types of fields appear in Theorem~\ref{thm:main}.
For now let us note that even though
any field extension $\k \subset \k'$
induces
an embedding $\Cr_n(\k) \subset \Cr_n(\k')$,
there is no straightforward group-theoretic comparison
between $\Cr_n(\k)$ and 
$\Cr_n(\k')$ (in particular $\Cr_n(\ol{\k})$) in
terms of their generating sets.

\ssec{Motivic invariants}
\hfill

Our method of proof relies on a 
motivic 
invariant of birational maps.
Here, by `motivic' we mean
in the sense of
the Grothendieck ring of varieties,
which is one the original
constructions 
of the ring of motives due to Grothendieck, 
and the Burnside ring of Kontsevich--Tschinkel
\cite{KontsevichTschinkel}.
For an $n$-dimensional variety $X$ over $\k$,
this is a group homomorphism
\[
c: \Bir(X) \to \Z[\Bir_{n-1}/\k].
\]
Here $\Bir_{n-1}/\k$ is the set of $(n-1)$-dimensional
birational
equivalence classes of varieties over $\k$.
The invariant $c$ records the birational
classes of 
the exceptional divisors created
and contracted by the birational map.
We prove that in the situations described
in Theorem \ref{thm:main} for $X = \P^n$, 
the invariant 
is non-trivial, 
while it vanishes on all pseudo-regularizable elements.
Furthermore, 
using the explicit non-vanishing of the
invariant $c$ we can 
construct infinitely many
homomorphisms $\Cr_n(\k) \to \Z$ for various
fields $\k$ and $n \ge 3$, see Corollary~\ref{cor:homZ}.
This is in contrast to the 
homomorphisms $\Cr_n(\k) \to \Z/2$ constructed in~\cite{BLZ}.

Our invariant $c$ 
has an enhancement
\[
\ti{c}: \Bir(X) \to \Kt_0(\Var^{\le n-1}/\k),
\]
taking values in the truncated
Grothendieck group of varieties
in such a way that $c$ is the composition
\[
\Bir(X) \overset{\ti{c}}{\to} \Kt_0(\Var^{\le n-1}/\k) \to \Z[\Bir_{n-1}/\k],
\]
where the second homomorphism is the projection to the top dimensional
birational classes. We develop the invariant $\ti{c}$ 
because some proofs are more transparent in this context,
and also because
it provides full control over
the Grothendieck ring of varieties, 
as it determines how
$\Kt_0(\Var^{\le n-1}/\k)$ is mapped to $\Kt_0(\Var^{\le n}/\k)$ 
(see Proposition~\ref{prop:ker-iota}).
None of these constructions require
any form of the
resolution of singularities, and
we work over an arbitrary field
$\k$.

Once the invariants $\ti{c}$ and $c$ are constructed
and their properties settled in Section \ref{sec:invariants}, 
in Section \ref{sec:geometric-constructions}
we introduce and
construct examples of the so-called L-links 
(L stands for the Lefschetz class $\L$
in the Grothendieck ring and L-equivalence \cite{KuznetsovShinder}); 
these links provide a geometric input suitable
for L-equivalence constructions 
and for showing the non-vanishing of $c(\Bir(X))$.
To construct non-trivial L-links, 
we rely on the geometric constructions
of L-equivalence
involving genus one curves
\cite{Mori-Mukai, Crauder-Katz, ShinderZhang}, 
K3 surfaces \cite{HassettLai}, 
and Calabi-Yau threefolds \cite{IMOUG2}.
Even though these constructions already exist in the literature when
$\k = \C$,
much effort goes into generalizing them and checking 
that such L-links $\phi$ are motivically non-trivial,
namely $c(\phi) \ne 0$,
over a more general field $\k$ as in Theorem \ref{thm:main}.

For instance, links involving genus one curves 
become motivically trivial over $\C$, 
but not over
small fields as 
in Theorem \ref{thm:main}(1);
roughly speaking, 
Theorem \ref{thm:main}(1) 
works for these fields
because they have infinitely many
Galois extensions of fixed degree.
To prove Theorem~\ref{thm:main}(2),
we use the construction~\cite{HassettLai},
taking as input a general complex
K3 surface of degree $12$ and
Picard rank one, 
together with some rational points on it.
We extend the construction to $\Q$ nevertheless,
representing K3 surfaces
of degree $12$ 
as hyperplane sections of
a \emph{rational} Fano threefold of degree $12$
defined over $\Q$,
and using Terasoma's argument to prove the existence of K3 surfaces
over $\Q$ of Picard rank one~\cite{Terasoma}
involving Lefschetz pencils.
Finally to prove Theorem \ref{thm:main}(3),
we need to show that the L-links with Calabi-Yau threefolds as centers \cite{IMOUG2}
exist and
can be nontrivial
over any infinite field $\k$.
We have to deal with 
Calabi-Yau varieties 
which are possibly uniruled, and
standard arguments in characteristic zero involving MRC fibrations
do not apply. Instead, we rely on
a weaker statement in the spirit of MRC fibrations
for separably rationally connected varieties,
which we prove in Appendix~\ref{app:MRC}.
We also include
Theorem~\ref{thm:BurtCY},
due to Burt Totaro, showing that
birational
varieties with nef canonical class and 
Picard number
one are isomorphic, in arbitrary
characteristic, which was previously
known in characteristic zero or in positive characteristic in dimension up to three.

\subsection{Relation to other work}

\subsubsection{L-equivalence} 
\hfill

This paper was motivated by 
understanding the geometric meaning
of the information encoded by the Grothendieck ring of varieties and
its variants,
which have played a spectacular role in the recent
results on the specialization of stable rationality \cite{NicaiseShinder} and rationality \cite{KontsevichTschinkel}.
In particular 
we want to understand
L-equivalence \cite{Borisov}, \cite{HassettLai},
\cite{KuznetsovShinder}, \cite{IMOUG2}; our concept of L-links
formalizes the existing geometric constructions leading to L-equivalence.

This is a continuation of our work \cite{LSZ20} where
jointly with Susanna Zimmermann we have studied
the invariant $c(\phi)$ for surfaces $X$ over perfect fields 
using Minimal Model Program.
We showed that
$c(\Bir(X)) = 0$ 
and as a consequence, birational maps between surfaces 
do not produce non-trivial L-equivalence.

On the other hand, 
non-trivial L-equivalence
already exists from dimension $3$ and on,
as we will
explain a construction in Subsection \ref{ssec-dim3} originating from curves in rational threefolds. 
Namely, by Theorem \ref{thm:ell-curves}, we have $\L([C] - [C']) = 0$ for a pair of genus one curves.
This improves the exponent of $\L$ in 
\cite{ShinderZhang} from $\L^4$ to the minimal possible.

\subsubsection{Studying birational maps through the blow up centers} 
\hfill

Making an invariant $c$ from the blow up centers
or the exceptional divisors is similar in spirit 
to the filtration on the Cremona groups by the 
genus of the curves blown up by 
the maps
of threefolds,
considered and studied
by Frumkin \cite{Frumkin-Cremona},
Lamy
\cite{Lamy-Cremona}, Bernardara \cite{Bernardara-Cremona}
and Blanc--Cheltsov--Duncan--Prokhorov \cite{BCDP}.
In fact similar considerations also appear in 
the proof of the Hudson--Pan theorem
\cite{HudsonPan}, which relies on 
the existence of a large
set of possible types of exceptional divisors
for Cremona transformations.
Comparing to the above works,
our approach has the key feature
that the invariant $c$ is a \emph{homomorphism}, which in a way
refines these filtrations.

The relationship between birational maps and the structure
of the Grothendieck ring of varieties also
appears in the work
of Zakharevich in terms of the differentials
in a certain spectral sequence \cite{Zakharevich}
which converges to the higher K-theory groups
lying over
the Grothendieck ring of varieties.
Our invariant $c(\phi)$ coincides with the first
differential from that spectral sequence \cite[Lemma 3.2]{Zakharevich},
and the invariant $\ti{c}(\phi)$ is a certain
enhancement of that map.

On the other hand, relating the Grothendieck
ring of varieties and the Cremona groups is
implicitly suggested in \cite{HassettLai}.
Part of the motivation for our work was to understand
to which extent the blow up centers of a rationality construction $\P^n \dto X$ are determined
by $X$ itself, which was asked in \cite[Introduction]{HassettLai}.
The intrinsic ambiguity in these centers 
leads to the non-vanishing of $c(\Bir(X))$,
which makes our applications to the Cremona groups possible.
Along the way we answer a question
of Hassett and Tschinkel about the 
realization of non-isomorphic genus one curves as
 factorization centers 
for rational threefolds over non-closed fields~\cite[Remark 23 (2)]{HassettTschinkelCycle}, by Theorem \ref{thm:ell-curves} which
was already mentioned above.

\subsubsection{Non-simplicity of the Cremona groups and constructions of non-trivial homomorphisms} 
\hfill

There has been important progress concerning the non-simplicity of the Cremona groups
over the last decade;
see~\cite{Cantat-Cremona} and~\cite[Introduction]{BLZ}
for an excellent overview of the history and recent results.
First of all,
in 2012
Cantat and Lamy proved that 
for any algebraically closed
field $\k$,
$\Cr_2(\k)$ is not a simple group~\cite{MR3037611}.
This result was later extended to $\Cr_2(\k)$ for any field $\k$ by Lonjou~\cite{MR3533276}.
Using the Minimal Model Program and Birkar's boundedness,
Blanc--Lamy--Zimmermann~\cite{BLZ} recently constructed infinitely
many homomorphisms $\Cr_n(\k) \to \Z/2$ ($n \ge 3$) when $\k$ is a subfield of $\C$ (see also~\cite{MR4136435}). 
Their constructions
originate from Sarkisov links and relations between them, and
are of quite different nature to 
the homomorphisms 
$\Cr_n(\k) \to \Z$ we obtain 
in Corollary~\ref{cor:homZ}
through the motivic invariants.

\subsubsection{Generation of Cremona groups
by involutions}
\hfill

As a consequence of Theorem \ref{thm:noether},
$\Cr_2(\C)$ is generated by involutions.
The question whether same is true in higher dimension
was asked by Dolgachev during his series of lectures
in Toulouse in 2016.
Motivated by this question, D\'eserti has proved that $\PGL_{n+1}(\C)$
is generated by involutions \cite[Proposition C]{Deserti}.
Lamy and Schneider have proved
that Cremona groups $\Cr_2(\k)$ over a perfect field $\k$ are generated
by involutions \cite{LamySchneider}, thus providing
a positive answer to Dolgachev's question for $n = 2$.
On the other hand,
Blanc, Schneider and Yasinsky announced 
that birational automorphisms of Severi--Brauer surfaces
are not generated by elements of finite order \cite{BSY},
extending previous results by Shramov \cite{Shramov-SB}.
As a consequence of their result, 
they
are able to give a different proof that $\Cr_n(\C)$
for $n \ge 4$
is not generated by $\PGL_{n+1}(\C)$ and elements of finite order.

\subsection{Notation and conventions}
\label{ss:notation}
\hfill

Unless specified otherwise, we work over an arbitrary field $\k$.
By a variety over $\k$,
we mean
an 
irreducible
and reduced separated
scheme
of finite type over $\k$.
We will use various Grothendieck groups and rings, which we summarize for the convenience of the reader:
\begin{enumerate}
\item[(a)] $\Z[\Bir/\k]$ (resp. $\Z[\Bir_n/\k]$) 
is the free abelian group generated by birational isomorphism classes of varieties 
(resp. varieties of dimension $n$)
over $\k$.
By definition 
$\Z[\Bir/\k] = \bigoplus_{n \ge 0}
\Z[\Bir_n/\k]$. This is the graded Burnside ring of Kontsevich--Tschinkel \cite{KontsevichTschinkel};
\item[(b)] $\Kt_0(\Var/\k)$ 
(resp. $\Kt_0(\Var^{\le n}/\k)$)
is the Grothendieck ring (resp. group)
of varieties (resp. varieties of dimension $\le n$)
over $\k$
modulo cut and paste relations; 
see Subsection~\ref{ssec-tG0} for the precise definitions;
\item[(c)] $\Kt_0(\PPAV/\k)$ is the Grothendieck group of the additive category of 
principally polarized abelian varieties over $\k$
modulo relations $[A \times B] = [A] + [B]$;
\item[(d)] for a profinite group
$G$, $\Kt_0(\Rep(G, \Q_\ell))$
is the Grothendieck group of
the abelian category of
finite-dimensional
continuous $G$-representations 
with coefficients in $\Q_\ell$. See \cite[Section 2.1]{LSZ20} for the details of this construction.
\end{enumerate}
Among these,  (a) and (b)
are central for this paper and
are explained in detail and 
related to each other in Subsection \ref{ssec-tG0}.
The Grothendieck groups (c) and (d)
will be used for some technical purposes
as ``motivic realizations'' of varieties and birational classes,
see in particular \eqref{eq:sigma}, \eqref{eq:j}.

\section{The motivic invariants}
\label{sec:invariants}

\ssec{Invariant $c(\phi)$ with values in $\Z[\Bir_{n-1}/\k]$}
\hfill

Let $\k$ be a field and
let $X$ and $Y$ be two varieties over $\k$.
A birational map $\phi : X \dto Y$
is a morphism $U \to Y$ over $\k$
defined on a Zariski dense open subscheme
$U \subset X$
which is isomorphic onto its image.
Two birational maps $\phi,\psi : X \dto Y$
are identical
if there exists an open 
$U \subset X$ such that $\phi_{|U} = \psi_{|U}$
as morphisms.
We can compose birational maps and each
birational map $\phi$ has an inverse $\phi^{-1}$,
thereby defining a
groupoid $\ul{\Bir}/\k$ of birational classes
consisting of $\k$-varieties as objects
and birational maps as morphisms.

We say that $X$ and $Y$ are birational if there is a birational
map between them. We say that $X$ and $Y$ are stably
birational if $X \times \P_\k^k$ and $Y \times \P_\k^\ell$ are birational
for some $k, \ell \ge 0$.
The set of birational equivalence classes of 
$n$-dimensional
$\k$-varieties
is denoted by $\Bir_n/\k$.
The set of birational maps 
from $X$ to $Y$ is denoted by $\Bir(X,Y)$.

Let $\phi: X \dto Y$ be a birational map between
$n$-dimensional $\k$-varieties.
We say that $\phi$ is an isomorphism at a point 
$x \in X$ if there exists a Zariski open $U \subset X$
containing $x$ such that
$\phi_{|U}$ is an open embedding into $Y$.
Here, the point $x \in X$ is not required to be closed.
We define the exceptional set as
$$\Ex(\phi) \cnec \Set{x \in X | \phi \text{ is not an isomorphism at } x} \subset X.$$
According to the definition, $\Ex(\phi)$ is a closed
subset
containing the indeterminacy locus of $\phi$.

By definition, $\phi$ establishes
an isomorphism
\begin{equation}\label{eq:iso-ex}
X \setminus \Ex(\phi) \simeq Y \setminus \Ex(\phi^{-1})
\end{equation}

\begin{Def}
For every birational map $\phi: X \dto Y$ of $\k$-varieties,
we set
\begin{equation}\label{eq:def-c}
c(\phi) \ \ \cnec 
\sum_{\substack{y \in \Ex(\phi^{-1}) \\ \codim_Y y = 1}} [\k(y)]
 \ \ -  \sum_{\substack{x \in \Ex(\phi) \\ \codim_X x = 1}} [\k(x)] \ \ \in \ \ \Z[\Bir_{n-1}/\k]
 \end{equation}
where $\Z[\Bir_{n-1}/\k]$ 
is the free abelian group generated by $\Bir_{n-1}/\k$,
and where the function fields $\k(y)$, $\k(x)$ are identified
with the corresponding birational classes.
\end{Def}

Since both $\Ex(\phi) \subset X$ 
and $\Ex(\phi^{-1}) \subset Y$ are Zariski closed,
the sums in the definition of $c(\phi)$ are finite.
The notation $c$ stands for \emph{centers}
of the exceptional divisors \cite[Definition 2.24]{KollarMori}, 
as they are
encoded in~\eqref{eq:def-c} 
in a certain way (see e.g. Proposition~\ref{pro-cWF}).

A crucial property
of $c$ is that it defines a homomorphism
from the groupoid $\ul{\Bir}/\k$
of birational classes to
$\Z[\Bir/\k]$.

\begin{lem}\label{lem-homc}
Given birational maps $\phi : X \dto Y$ and $\psi : Y \dto Z$,
we have
$$c(\psi \circ \phi) = c(\phi) + c(\psi).$$
\end{lem}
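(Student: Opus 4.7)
The plan is to analyze the codimension-one part of $\Ex(\psi \circ \phi)$ and $\Ex((\psi \circ \phi)^{-1})$ by tracking how prime divisors behave under composition, then match terms with $c(\phi) + c(\psi)$ after cancellation.

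First I would establish the following dichotomy for every codimension-one point $x$ of $X$. If $x \in \Ex(\phi)$, then the corresponding prime divisor $D = \overline{\{x\}}$ is contracted by $\phi$, so it is a fortiori contracted by $\psi \circ \phi$, giving $x \in \Ex(\psi \circ \phi)$. If on the contrary $x \notin \Ex(\phi)$, then some open neighborhood $U \ni x$ embeds into $Y$ via $\phi$; setting $y = \phi(x)$, this gives a codimension-one point of $Y$ with a canonical isomorphism $\k(x) \simeq \k(y)$, and moreover $\psi \circ \phi$ is an isomorphism at $x$ if and only if $\psi$ is an isomorphism at $y$. Consequently, the codimension-one part of $\Ex(\psi \circ \phi)$ is in residue-field preserving bijection with
\[
\Ex(\phi)^{(1)} \ \sqcup\ \bigl\{y \in \Ex(\psi)^{(1)} : y \notin \Ex(\phi^{-1})^{(1)}\bigr\}.
\]
Applying the same analysis to $(\psi \circ \phi)^{-1} = \phi^{-1} \circ \psi^{-1}$ yields an analogous residue-field preserving bijection of $\Ex((\psi \circ \phi)^{-1})^{(1)}$ with
\[
\Ex(\psi^{-1})^{(1)} \ \sqcup\ \bigl\{y \in \Ex(\phi^{-1})^{(1)} : y \notin \Ex(\psi)^{(1)}\bigr\}.
\]

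Second, I would substitute these decompositions into the definition \eqref{eq:def-c} of $c(\psi \circ \phi)$. Writing $\Ex(\phi^{-1})^{(1)}$ as the disjoint union of its intersection with $\Ex(\psi)^{(1)}$ and its complement, and doing the same for $\Ex(\psi)^{(1)}$ inside $c(\phi) + c(\psi)$, the two sums over the common part $\Ex(\phi^{-1})^{(1)} \cap \Ex(\psi)^{(1)} \subset Y^{(1)}$ appear with opposite signs and cancel. The remaining terms on the right-hand side, namely $[\k(y)]$ for $y \in \Ex(\phi^{-1})^{(1)} \setminus \Ex(\psi)^{(1)}$ (which correspond to divisors in $\Ex((\psi \circ \phi)^{-1})^{(1)}$ via the bijection from Step 1) together with $[\k(z)]$ for $z \in \Ex(\psi^{-1})^{(1)}$, reassemble precisely into $\sum_{z \in \Ex((\psi \circ \phi)^{-1})^{(1)}} [\k(z)]$, and dually for the negative terms over $X$.

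The only delicate point is the verification that a codimension-one point $x \notin \Ex(\phi)$ determines a codimension-one point $y = \phi(x) \in Y$ with $\k(x) \simeq \k(y)$; this is immediate from the definition of $\Ex(\phi)$ as the non-isomorphism locus, since an open neighborhood of $x$ embeds into $Y$. Everything else is bookkeeping with finite disjoint unions of prime divisors. The main conceptual obstacle, which is really just a matter of care, is confirming that the three exceptional loci $\Ex(\phi), \Ex(\psi), \Ex(\psi \circ \phi)$ and their inverses interact correctly under composition at codimension one — i.e.\ that no new or missing divisorial components appear beyond those accounted for by the dichotomy above — and this follows because being in $\Ex$ at a codimension-one point is equivalent to the prime divisor being contracted, a condition that composes transparently.
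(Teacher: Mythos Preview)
Your direct divisorial bookkeeping differs from the paper's approach, which instead constructs the refined invariant $\ti{c}$ with values in $\Kt_0(\Var^{\le n-1}/\k)$ (Theorem~\ref{thm-Defc}): choosing a single open $U\subset X$ on which both $\phi|_U$ and $\psi|_{\phi(U)}$ are open embeddings, additivity of $\ti{c}$ is a one-line telescoping identity in the truncated Grothendieck group, and then $c=\pi_{n-1}\circ\ti{c}$.

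Your argument has a genuine gap. The key claim that $x\in\Ex(\phi)^{(1)}$ implies $x\in\Ex(\psi\circ\phi)^{(1)}$, which you justify by asserting that membership in $\Ex$ at codimension one is equivalent to the prime divisor being contracted and that contraction ``composes transparently,'' is false even for smooth projective varieties. Take $X=Z=\Bl_p\P^2$, $Y=\P^2$, let $\phi:X\to Y$ be the blow-down of the $(-1)$-curve $E$, and $\psi=\phi^{-1}$. The generic point of $E$ lies in $\Ex(\phi)^{(1)}$ and $E$ is genuinely contracted by $\phi$, yet $\psi\circ\phi=\id_X$, so $\Ex(\psi\circ\phi)=\emptyset$; your bijection predicts the singleton $\{[E]\}$. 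What happens is that $\phi$ contracts $E$ to a point at which $\psi$ is undefined, but the composite $\psi\circ\phi$ nonetheless extends across $E$. The cancellation in $c(\phi)+c(\psi)=0$ occurs between the contribution of $E\subset X$ to $c(\phi)$ and the contribution of $E\subset Z$ to $c(\psi)$, i.e.\ between points of $X^{(1)}$ and $Z^{(1)}$, whereas your scheme only permits cancellation inside $Y^{(1)}$. Analogous failures arise for non-proper targets (take $X=Z=\A^1$, $Y=\A^1\setminus\{0\}$, with $\phi$ the identity on the overlap and $\psi$ the inclusion) and for non-normal varieties (normalization of a nodal curve composed with its inverse). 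The paper's cut-and-paste argument sidesteps exceptional divisors entirely, which is precisely why it requires no smoothness, normality, or properness hypotheses.
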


Lemma~\ref{lem-homc} has a natural proof
based on motivic cut-and-paste,
and we postpone the proof until Subsection 
\ref{ssec-leftK},
see Theorem \ref{thm-Defc}.

Note that we work over arbitrary fields.
In particular,
the definition of $c$ and the proof of Lemma~\ref{lem-homc}
do not rely on 
any version of the resolution of singularities.
Over fields of characteristic zero 
or for surfaces over arbitrary fields
we understand birational maps better, 
thanks to
Hironaka's resolution of singularities (see e.g.~\cite{KollarRes}) 
and
Weak Factorization~\cite{WF},~\cite[Lemma 54.17.2]{stacks-project}.
The latter asserts that
every birational map $\phi : X \dto Y$
between smooth complete varieties over $\k$
admits a factorization
\begin{equation}\label{eq:wf}
\xymatrix{
 & Y_1 \ar[dr]^{q_1} \ar[dl]_{p_1} & & Y_2 \ar[dl]_{p_2} & \cdots & Y_{m-1} \ar[dr]^{q_{m-1}} & & Y_m  \ar[dr]^{q_m} \ar[dl]_{p_m}  \\
X & & X_1 & & \cdots & & X_{m-1} & & Y \\
}
\end{equation}
where each $p_i$ (resp. $q_i$) is a sequence of blow ups
(resp. blow downs)
along smooth irreducible closed subvarieties
of codimension $\ge 2$ or an isomorphism
(which we understand
as a blow up with empty center).

\begin{proposition}\label{pro-cWF}
If a birational map $\phi: X \dto Y$ between $n$-dimensional
smooth complete $\k$-varieties 
admits
a factorization \eqref{eq:wf},
then 
\begin{equation}\label{eqn-cWF}
c(\phi) = 
\sum_{Z \in \cU} [\P^{n - \dim(Z) - 1} \times Z]
  -  \sum_{T \in \cD} [\P^{n - \dim(T) - 1} \times T]  \in  \Z[\Bir_{n-1}/\k]
\end{equation}
where $\cU$ \textup(resp. $\cD$\textup)
is the set of blow up centers 
of $p_i$ \textup(resp. $q_i$\textup).

In particular, the alternating sum in~\eqref{eqn-cWF} is well-defined, namely it is independent of the
choice of weak factorization of $\phi$.

\end{proposition}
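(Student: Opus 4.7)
The plan is to use the groupoid homomorphism property of $c$ established in Lemma~\ref{lem-homc} in order to reduce the computation to the case of a single smooth blow-up. To carry this out, I would first refine the given weak factorization \eqref{eq:wf} so that each composite $p_i$ and $q_i$ is broken up into a sequence of individual blow-ups along smooth irreducible closed centers of codimension $\ge 2$, discarding the trivial isomorphism factors whose contribution to $c$ is zero. The map $\phi$ then becomes an alternating composition of single blow-up morphisms and their inverses, and iterated application of Lemma~\ref{lem-homc} splits $c(\phi)$ into a sum of contributions, one from each elementary factor.

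The core local computation is to evaluate $c$ on a single smooth blow-up. Let $b \colon \tilde V \to V$ be the blow-up of a smooth $n$-dimensional variety $V$ along a smooth irreducible closed subvariety $Z$ of codimension $r \ge 2$, with exceptional divisor $E \subset \tilde V$. Since $b$ is a morphism, $\Ex(b) = E$ is of pure codimension one, whereas $\Ex(b^{-1}) = Z$ has codimension $r \ge 2$ and therefore contributes nothing in \eqref{eq:def-c}. The divisor $E$ is the projectivization $\P(N_{Z/V})$ of the normal bundle, which is Zariski locally trivial as a $\P^{r-1}$-bundle and hence birational to $\P^{r-1} \times Z$. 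Plugging into \eqref{eq:def-c} yields
\[
c(b) = -[\P^{r-1} \times Z], \qquad c(b^{-1}) = [\P^{r-1} \times Z],
\]
with $r-1 = n - \dim Z - 1$.

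Combining the two steps gives \eqref{eqn-cWF}: the set $\cU$ enumerates the centers of the $p_i$, each appearing in $\phi$ through some $p_i^{-1}$ and contributing $+[\P^{n-\dim Z - 1} \times Z]$, while $\cD$ enumerates the centers of the $q_i$, contributing $-[\P^{n-\dim T - 1} \times T]$. The well-definedness of the alternating sum is then a tautology, since $c(\phi)$ is defined intrinsically from $\phi$ without reference to any factorization, so the right-hand side of \eqref{eqn-cWF} must agree with $c(\phi)$ and in particular be independent of the choice of weak factorization. The only nontrivial input is Lemma~\ref{lem-homc}, whose proof via cut-and-paste is deferred to Subsection~\ref{ssec-leftK}; granted that, the present proposition is essentially a direct computation, and I do not expect any serious obstacle beyond the routine birational identification $\P(N_{Z/V})\sim \P^{r-1} \times Z$ arising from Zariski local triviality of the projectivized normal bundle.
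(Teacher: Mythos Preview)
Your proposal is correct and follows essentially the same approach as the paper: both rely on Lemma~\ref{lem-homc} to decompose $c(\phi)$ along the weak factorization, and both reduce to the observation that the exceptional divisor of a smooth blow-up along $Z$ is birational to $\P^{n-\dim Z-1}\times Z$. You have spelled out the refinement of each $p_i$, $q_i$ into individual blow-ups and the sign bookkeeping in more detail than the paper's one-sentence proof, but the underlying argument is the same.
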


In \eqref{eqn-cWF} we use the convention that $[\emptyset] = 0$.

\begin{proof}

Proposition~\ref{pro-cWF} follows easily from
Lemma \ref{lem-homc} and the definition of $c$,
because the exceptional divisors of the $p_i$'s are of the form
$\P^{n-\dim(Z)-1} \times Z$, for $Z \in \cU$
and similarly for the $q_i$'s.
\end{proof}

We can relate $c(\phi)$ to the Galois action
on the exceptional
divisors.
Let $G_\k = \Gal(\k^{\sep}/\k)$
and let $\Kt_0(\Rep(G_\k,\Q_\ell))$ be the Grothendieck
ring of continuous finite-dimensional 
$\ell$-adic representations of $G_\k$; here
$\ell$ is a fixed prime number different from $\chr(\k)$. 
Define the group homomorphism
\begin{equation}\label{eq:sigma}
\gs: \Z[\Bir/\k] \to \Kt_0(\Rep(G_\k,\Q_\ell))
\end{equation}
by sending the class of a $\k$-irreducible variety 
$X$ to the permutation representation
on the irreducible components of $X_{\k^{\sep}}$
with coefficients in $\Q_\ell$.

For a smooth projective variety $X/\k$, 
we consider the N\'eron-Severi group with $\Q_\ell$-coefficients
$N(X) := \NS(X_{\k^{\sep}}) \otimes \Q_\ell$.
Then $N(X)$ is a finite-dimensional $\Q_\ell$-vector
space \cite[Theorem II.4.5]{Kollarrat} with a
continuous 
Galois action.

\begin{proposition}\label{prop:Picnb}
Let $\k$ be a field 
of characteristic zero.
For every birational map $\phi : X \dto Y$ 
between smooth projective 
varieties,
we have
$$\gs(c(\phi)) = [N(Y)] - [N(X)].$$
In particular, if $\phi \in \Bir(X)$,
then $\sigma(c(\phi)) = 0$.
\end{proposition}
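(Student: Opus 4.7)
The strategy is to reduce to the case of a single blow-up via weak factorization, which is available in characteristic zero, and then apply the Galois-equivariant form of the blow-up formula for the Néron-Severi group.

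First I would apply weak factorization to write $\phi$ as in~\eqref{eq:wf} and invoke Proposition~\ref{pro-cWF} to obtain
$$c(\phi) = \sum_{Z \in \cU} [\P^{n - \dim Z - 1} \times Z] - \sum_{T \in \cD} [\P^{n - \dim T - 1} \times T].$$
A preliminary observation is that $\gs([\P^k \times Z]) = \gs([Z])$ for any $\k$-irreducible $Z$: since $\P^k_{\k^{\sep}}$ is irreducible, the components of $(\P^k \times Z)_{\k^{\sep}}$ are the products $\P^k_{\k^{\sep}} \times Z_i$, where $Z_1,\dots,Z_r$ are the components of $Z_{\k^{\sep}}$, and $G_\k$ permutes them exactly as it permutes the $Z_i$. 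Consequently
$$\gs(c(\phi)) = \sum_{Z \in \cU} \gs([Z]) - \sum_{T \in \cD} \gs([T]).$$

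The key input is a Galois-equivariant blow-up formula: for any blow-up $\pi : \ti{W} \to W$ of a smooth projective $\k$-variety along a smooth $\k$-irreducible closed subscheme $Z$ of codimension $c \ge 2$, there is an isomorphism
$$N(\ti{W}) \cong N(W) \oplus \gs([Z])$$
of continuous $G_\k$-representations. Indeed, over $\k^{\sep}$ the classical blow-up formula gives $\Pic(\ti{W}_{\k^{\sep}}) = \pi^{*}\Pic(W_{\k^{\sep}}) \oplus \bigoplus_i \Z \cdot [E_i]$, where $E_1,\dots,E_r$ are the irreducible components of the exceptional divisor $E_{\k^{\sep}}$; each $E_i$ is a $\P^{c-1}$-bundle over the corresponding component $Z_i$ of $Z_{\k^{\sep}}$, and $G_\k$ permutes the $E_i$ in the same way as the $Z_i$. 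Modding out by algebraic equivalence and tensoring with $\Q_\ell$ preserves the direct sum decomposition, so the extra summand is precisely the permutation representation $\gs([Z])$.

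Applying this formula to each blow-up in the factors $p_i$ and $q_i$ of~\eqref{eq:wf} and telescoping yields
$$[N(Y)] - [N(X)] = \sum_{i=1}^{m} \Bigl( [N(X_i)] - [N(X_{i-1})] \Bigr) = \sum_{Z \in \cU} \gs([Z]) - \sum_{T \in \cD} \gs([T]) = \gs(c(\phi)),$$
and the final clause follows by taking $Y = X$. The main obstacle I anticipate is the careful verification of the Galois-equivariant blow-up formula when $Z$ is $\k$-irreducible but geometrically reducible: one must identify the components of $E_{\k^{\sep}}$ with those of $Z_{\k^{\sep}}$ in a $G_\k$-equivariant manner and check that the direct summand survives the passage to $\NS \otimes \Q_\ell$. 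Once this equivariant enhancement is in place, the rest is a formal telescoping powered by weak factorization and Proposition~\ref{pro-cWF}.
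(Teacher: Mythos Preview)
Your proposal is correct and follows essentially the same approach as the paper: reduce via weak factorization to a single blow-up and invoke the Galois-equivariant blow-up formula $N(\ti{W}) \simeq N(W) \oplus \Q_\ell[E]$, then telescope. The only cosmetic difference is that the paper computes $c$ of a single blow-up directly as $-[E]$ rather than passing through Proposition~\ref{pro-cWF}, but since the components of $E_{\k^{\sep}}$ and of $Z_{\k^{\sep}}$ are in Galois-equivariant bijection this amounts to the same computation.
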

\begin{proof}

By the Weak Factorization Theorem, 
the statement is reduced to the case where
$\phi: X \to Y$ is a single blow up 
with a smooth
$\k$-connected center $Z$ of codimension $\ge 2$.
In this case
the result follows
from the isomorphism of Galois representations
$N(X) \simeq N(Y) \oplus \Q_\ell[E]$,
where $\Q_\ell[E]$ stands for the permutation
representation on the geometric irreducible components
of $E$, because $\sigma(c(\phi)) = \sigma(-[E]) = -[\Q_\ell[E]]$.
\end{proof}

In the rest of this section, we 
first explain for which kinds of varieties
$c$ is identically zero,
then enhance $c(\phi)$ to 
an invariant $\ti{c}(\phi)$
which takes values in the
Grothendieck groups of varieties truncated
by the dimension.
We will explain how these invariants control the structure
of the Grothendieck ring
viewed as the 
colimit of its truncations.
The reader who is interested in examples and geometric applications
can skip these parts and go directly
to Section~\ref{sec:geometric-constructions}.

\subsection{Vanishing results for $c(\phi)$}
\hfill

Given a birational automorphism 
$\phi: X \dto X$ of variety $X$,
there are situations where $c(\phi) = 0$ is automatic.
First of all, by definition
this is always the case when $\phi$
is an isomorphism, or just a pseudo-isomorphism (isomorphism in codimension one).
This applies 
to smooth projective curves
and to all
smooth projective varieties $X$
with $K_X$ is nef (e.g. Calabi--Yau varieties),
by
Theorem~\ref{thm:BurtCY}.

In dimension two,
the following vanishing result was proven in~\cite{LSZ20}.

\begin{theorem}\cite[Theorem 3.4]{LSZ20}
\label{thm:surfaces-main}
If $\k$ is a perfect field and
$X/\k$ is a surface,
then $c(\Bir(X)) = 0$.
\end{theorem}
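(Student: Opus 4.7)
The plan is to combine the Minimal Model Program with the Sarkisov program for surfaces over the perfect field $\k$. Since $c$ is a groupoid homomorphism by Lemma~\ref{lem-homc}, it is well-behaved under birational conjugation, so we may replace $X$ by any smooth projective model in its birational class (which exists by resolution of singularities for surfaces over perfect fields). Running the MMP, we reduce to the case where $X$ is either minimal with $K_X$ nef, or a Mori fiber space.

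In the minimal case, the classical theorem that every birational self-map of a smooth projective surface with $K_X$ nef is an automorphism applies. Indeed, for a common resolution $p, q : Z \to X$ of a given $\phi \in \Bir(X)$, the discrepancy divisors in $K_Z = p^* K_X + E_p = q^* K_X + E_q$ are effective and supported on the exceptional loci; combined with the nefness of $K_X$, the negativity lemma forces $E_p = E_q$, so $\phi$ is a pseudo-isomorphism between smooth projective surfaces, hence an isomorphism. Thus $c(\phi) = 0$.

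In the Mori fiber space case, the Sarkisov program for surfaces over perfect fields decomposes any $\phi \in \Bir(X)$ into a composition of elementary Sarkisov links between the Mori fiber spaces birational to $X$. Because $c$ is a groupoid homomorphism, the claim reduces to showing that the sum of $c$-contributions along any Sarkisov loop vanishes. The four types of surface Sarkisov links (I--IV) can be described explicitly in terms of blow-ups and blow-downs at closed points, and a direct computation shows that the created and contracted $\P^1$-bundle exceptional divisors pair up in $\Z[\Bir_1/\k]$.

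The main obstacle is controlling residue fields over non-algebraically closed perfect fields. A closed blow-up center $x$ on a surface over $\k$ has residue field $\k(x)$ which may be a nontrivial extension of $\k$, and the exceptional $\P^1$-bundle has birational class $[\P^1_{\k(x)}] \in \Z[\Bir_1/\k]$ depending genuinely on $\k(x)$. One must therefore verify, link by link, that the multisets of these residue fields agree for the created versus contracted divisors; merely matching the total number of blow-ups or matching the associated Galois representations is not sufficient. This requires a careful analysis of the explicit geometry of Mori fiber spaces (Severi--Brauer surfaces, rank-one del Pezzo surfaces, and conic bundles) over $\k$, together with the Galois-equivariance of each step of the MMP.
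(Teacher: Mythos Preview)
The paper does not prove this statement; it simply cites \cite[Theorem~3.4]{LSZ20} as a result established in prior joint work. There is therefore no in-paper proof to compare against, beyond the remark in the introduction that the earlier paper studied $c(\phi)$ for surfaces ``using Minimal Model Program''.

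Your outline is the correct architecture and matches what is indicated about \cite{LSZ20}: pass to a smooth projective model (legitimate since $c$ is a groupoid homomorphism, so $c(\alpha^{-1}\phi\alpha)=c(\phi)$), run the surface MMP, dispose of the $K_X$ nef case via the pseudo-isomorphism argument (this is exactly Theorem~\ref{thm:BurtCY} specialized to surfaces), and treat Mori fibre spaces via a Sarkisov decomposition.

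However, what you have written is a plan, not a proof. The last two paragraphs explicitly defer the actual content: you assert that ``a direct computation shows'' the exceptional classes pair up, and then immediately concede that matching the residue fields $\k(x)$ of blow-up centres across a Sarkisov loop is the ``main obstacle'' and ``requires a careful analysis'' that you do not perform. That analysis \emph{is} the theorem. Over a non-closed perfect field the classes $[\P^1_{\k(x)}]\in\Z[\Bir_1/\k]$ genuinely depend on the residue field, and nothing you have said forces the multisets of residue fields on the two sides of a loop to agree; one has to go through the explicit geometry of the links (del Pezzo surfaces of each degree, conic bundles, Severi--Brauer surfaces) and check it. Until that is done, there is a genuine gap.
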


In dimension $3$,
the vanishing $c(\Bir(X)) = 0$ still holds in certain cases,
mostly depending on the base field $\k$.
The proof relies on the existence
of the principally
polarized intermediate Jacobian defined for
rationally connected threefolds over non-algebraically
closed
fields \cite{AchterCMVial}, \cite{AchterCMVial-2}, \cite{BenoistWittenbergPerf}.

\begin{proposition}\label{prop:ratconn3-fold}
Let $\k$ be a field of characteristic zero,
and $X$ be a smooth projective
rationally connected threefold.
The image $c(\Bir(X))$ is contained
in the subgroup of $\Z[\Bir_{2}/\k]$ generated
by classes of the form 
$[\P^1 \times C]$ where $C$ 
is an irreducible smooth curve
whose geometric
irreducible components have genus zero or one.
If $\k$ is algebraically closed, then 
$c(\Bir(X)) = 0$.
\end{proposition}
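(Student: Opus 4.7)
The argument uses Weak Factorization (available in characteristic zero) and the principally polarized intermediate Jacobian for rationally connected threefolds as constructed by Achter--Casalaina-Martin--Vial and Benoist--Wittenberg. The central input is the following fact: if $\pi : \wt X \to X$ is the blow-up of an RC threefold along a smooth closed subvariety $Z$, then as PPAVs over $\k$ one has $IJ(\wt X) \simeq IJ(X) \times \Jac(Z)$, where $\Jac(Z) = 0$ if $\dim Z = 0$ and $\Jac(Z) = \Jac(C)$ if $Z = C$ is a smooth curve.

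First, I would apply Proposition~\ref{pro-cWF} to an arbitrary $\phi \in \Bir(X)$. The blow-up centers in the weak factorization \eqref{eq:wf} are either smooth $\k$-irreducible curves $C$ (contributing $[\P^1 \times C]$) or closed points $\Spec K$ (contributing $[\P^2_K]$). Since $\P^2$ is birational to $\P^1\times\P^1$ over any field, $[\P^2_K] = [\P^1 \times \P^1_K]$, and the curve $\P^1_K$ has all geometric components $\cong \P^1$. Consequently every term occurring in $c(\phi)$ is already of the form $[\P^1 \times C]$ for some smooth $\k$-irreducible curve $C$; what must be shown is that curves with some geometric component of genus $\ge 2$ cancel out when passing to $\Z[\Bir_{n-1}/\k]$, and moreover that in the algebraically closed case even curves of genus $0$ or $1$ cancel.

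Next, traversing the factorization of $\phi: X \dto X$ and keeping track of $IJ$ at each step yields an isomorphism of PPAVs over $\k$
\[
IJ(X) \times \prod_{Z \in \cU} \Jac(Z) \ \simeq \ IJ(X) \times \prod_{T \in \cD} \Jac(T),
\]
which translates into the equality
\[
\sum_{C^+ \in \cU} [\Jac(C^+)] \ = \ \sum_{C^- \in \cD} [\Jac(C^-)]
\]
in $\Kt_0(\PPAV/\k)$. Now I would invoke the unique factorization of PPAVs into indecomposable PPAVs (the theta divisor characterization: an indecomposable PPAV has an irreducible theta divisor) and Torelli's theorem. For a $\k$-irreducible curve $C$ whose geometric components have some $C_i$ of genus $\ge 2$, $\Jac(C_i)$ is an indecomposable PPAV of dimension $\ge 2$, and by Torelli such a factor determines $C_i$ up to isomorphism. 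The equality above therefore forces a matching (with Galois-compatibility) of the genus-$\ge 2$ components appearing on the two sides; this matching lifts to an equality of the corresponding birational classes $[\P^1 \times C]$ in $\Z[\Bir_2/\k]$, so they cancel from $c(\phi)$. This proves the first assertion.

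For the algebraically closed case, the same argument applied to elliptic indecomposable factors kills the genus-$1$ contributions as well, leaving $c(\phi)$ equal to an integer multiple of $[\P^2]$. The coefficient of $[\P^2]$ is then pinned down by Proposition~\ref{prop:Picnb}: $\sigma(c(\phi)) = 0$, and over $\bar\k$ the map $\sigma$ simply reads off total signed multiplicity, so this integer is zero. The main obstacle in this plan is the last step of Torelli-type rigidity over a non-algebraically-closed $\k$: one needs to ensure that cancellation of indecomposable PPAV factors in $\Kt_0(\PPAV/\k)$, together with the requirement that those factors arise as geometric pieces of Jacobians of $\k$-irreducible curves, really forces a matching of the underlying $\k$-curves (not only their geometric reductions), so that the cancellation genuinely takes place in $\Z[\Bir_2/\k]$ and not just in a coarser invariant.
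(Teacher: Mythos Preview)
Your approach is essentially the paper's: weak factorization, the blow-up formula for the intermediate Jacobian, unique decomposition in $\Kt_0(\PPAV/\k)$, Torelli, and Proposition~\ref{prop:Picnb} for the final step over $\ol\k$. The obstacle you flag at the end is real, and the paper resolves it not by matching geometric components and then descending, but by working over $\k$ throughout.

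Concretely, the paper packages the argument as a homomorphism
\[
j:\Z[\Bir_2/\k]\longrightarrow \Kt_0(\PPAV/\k),\qquad [\text{ruled surface over }C]\mapsto [\Jac(C)],
\]
so that the blow-up formula for $IJ$ gives $j(c(\phi))=[\Jac(Y)]-[\Jac(X)]$, hence $c(\phi)\in\Ker(j)$ when $Y=X$. The point is then to show that the composite $\Z[\Bir_1/\k]\xrightarrow{[\P^1]\cdot}\Z[\Bir_2/\k]\xrightarrow{j}\Kt_0(\PPAV/\k)$ is injective on the span of classes $[C]$ with all geometric components of genus $\ge 2$. Two facts do this: (i) for a $\k$-irreducible smooth projective curve $C$, the PPAV $\Jac(C)$ is \emph{indecomposable over $\k$} (because Galois permutes the geometric indecomposable factors $\Jac(C_i)$ transitively, and unique factorization of PPAVs is Galois-equivariant); and (ii) Serre's form of Torelli, valid over any base field, says that $\Jac(C)$ as a PPAV over $\k$ determines $C$ over $\k$ when the geometric components have genus $\ge 2$. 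With (i) in hand, $\Kt_0(\PPAV/\k)$ is free on indecomposables over $\k$, and (ii) gives the desired injectivity directly at the level of $\k$-curves. No ``lift of a matching of geometric pieces'' is needed; this is exactly the step your plan was missing.
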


See Theorem \ref{thm:ell-curves} for genus one
curves appearing in the image of $c(\Bir(\P^3))$
over non-algebraically closed fields.

\begin{proof}
By the
Weak Factorization theorem
and Proposition~\ref{pro-cWF},
the image $c(\Bir(X))$ is contained
in the subgroup of $\Z[\Bir_2/\k]$
generated by classes $[\P^1 \times C]$
for smooth irreducible curves $C$
(if the blow up center is a closed point $Z$, the
corresponding term is $[\P^2 \times Z]
= [\P^1 \times \P^1_Z])$.

Now consider the assignment
\begin{equation}\label{eq:j}
[X] \mapsto \left\{\begin{array}{cl}
     \; \Jac(C), & \text{$X$ is birational to a 
     ruled surface over $C$} \\
     \; 0, & \text{otherwise} \\
\end{array}\right.
\end{equation}
for every birational class $[X] \in \Bir_2/\k$ of irreducible surfaces, where
$C$ is taken to be an irreducible smooth projective curve,
and $\Jac(C)$ is the Jacobian of $C$.
This defines a
group homomorphism 
$$j: \Z[\Bir_2/\k] \to 
\Kt_0(\PPAV/\k),$$
where $\Kt_0(\PPAV/\k)$ is the Grothendieck 
group of principally
polarized abelian varieties.

Let us prove that
for any $\phi \in \Bir(X,Y)$
where $X$ and $Y$ are
smooth projective rationally connected threefolds, we have
\begin{equation}\label{eq:j-c-jac}
j(c(\phi)) = [\Jac(Y)] - [\Jac(X)],
\end{equation}
where $\Jac(X)$ is the intermediate Jacobian of $X$~\cite{BenoistWittenbergPerf}.
Once again by Weak Factorization
and Lemma~\ref{lem-homc},
it suffices to check~\eqref{eq:j-c-jac}
for a single blow up along an irreducible center. 
This follows from
the blow up formulas
\cite[Lemma 2.10]{BenoistWittenbergPerf}
that $\Jac(\Bl_C(X)) \simeq \Jac(X) \times \Jac(C)$ 
when $C \subset X$ is a smooth projective curve,
and $\Jac(\Bl_P(X)) \simeq \Jac(X)$ for a point $P \in X$.
In particular, for any $\phi \in \Bir(X)$ we have
$$c(\phi) \in \Ker(j) \cap ([\P^1] \cdot \Z[\Bir_1/\k]).$$

For every irreducible smooth projective curve $C$ over $\k$,
the Jacobian $\Jac(C)$ is indecomposable over $\k$ 
as a principally polarized abelian variety.
Indeed, since $C$ is irreducible,
the $\Gal(\ol{\k}/\k)$-action on the geometric
irreducible components $C_1,\ldots,C_r$,
and therefore on the factors
$\Jac(C_{\ol{\k}}) \simeq \Jac(C_1) \times \cdots \times \Jac(C_r)$
of the decomposition of $\Jac(C_{\ol{\k}})$ is transitive.
Since each $\Jac(C_i)$ is indecomposable,
it follows from the uniqueness of the 
decomposition of principally polarized abelian varieties into irreducible ones
(see \cite[Theorem 3.3]{jordan2016unique} or \cite{DebarrePAV})
that $\Jac(C)$ is indecomposable.

Since principally polarized abelian varieties
admit unique decompositions into indecomposable ones, 
the Grothendieck 
group of principally
polarized abelian varieties
$\Kt_0(\PPAV/\k)$ is isomorphic to the free abelian group
generated by 
indecomposable ones. 
From Serre's Torelli theorem \cite[Appendix]{SerreTorelli},
it follows that $\Jac(C)$, as a principally polarized
abelian variety determines $C$ when all geometric components
of $C$ have genus $\ge 2$.
Thus the kernel
\[
\Ker(\Z[\Bir_1/\k] \overset{[\P^1] \cdot}{\to} \Z[\Bir_2/\k] 
\overset{j}\to \Kt_0(\PPAV/\k))
\]
is contained in the subgroup generated by 
birational classes of
curves with
geometric irreducible components
of genus zero or genus one.
This 
proves the first statement of Proposition~\ref{prop:ratconn3-fold}.

Finally, if $\k$ is algebraically closed,
then the Torelli theorem holds
for genus one curves as well. Thus for any
$\phi \in \Bir(X)$, 
necessarily 
$c(\phi) = m[\P^1 \times \P^1]$ for some $m \in \Z$.
It follows from Proposition \ref{prop:Picnb}
that $m = 0$.
\end{proof}

\subsection{Truncated Grothendieck groups $\Kt_0(\Var^{\le n}/\k)$}\label{ssec-tG0} 
\hfill

Let $\k$ be a field.
Recall that $\Bir_{n}/\k$ (resp. $\Bir/\k$)
denotes the set of birational isomorphism
classes of $n$-dimensional varieties (resp. all varieties),
and we have
the free abelian group 
$$\Z[\Bir/\k] = \bigoplus_{n \ge 0} \Z[\Bir_n/\k].$$
Similarly, let $\Z[\StBir/\k]$ denote 
the 
free abelian group
generated by the stable birational classes.
We have a surjective
homomorphism $\Z[\Bir/\k] \to 
\Z[\StBir/\k]$
sending each birational class to the corresponding
stable birational class.
Furthermore, if
$\chr(\k) = 0$, we have the Larsen-Lunts
isomorphism \cite{LarsenLunts} 
\[\xymatrix{
\Kt_0(\Var/\k) \ar[d] & \Z[\Bir/\k] \ar[d] \\
\Kt_0(\Var/\k) / (\L) \ar[r]_{ \ \ \simeq} &  \Z[\StBir/\k],
}
\]
where $\Kt_0(\Var/\k)$ is the Grothendieck ring of varieties over $\k$ and $\L = [\A^1_\k]$.
There is no natural homomorphism
between
the groups in the upper
row, even in characteristic zero.

We write $\Kt_0(\Var^{\le n}/\k)$ for the Grothendieck
group of varieties of dimension $\le n$: the generators
of $\Kt_0(\Var^{\le n}/\k)$
are
classes $[X]_n$
of schemes of finite type $X/\k$ with $\dim(X) \le n$
and the relations
are generated by
\begin{equation}\label{eq:cut-and-paste}
[X]_n = [U]_n + [Z]_n
\end{equation}
for every open $U \subset X$ with closed complement $Z$.
We have a sequence of natural maps
\begin{equation}\label{eqn-iota}
    \cdots \to \Kt_0(\Var^{\le n-1}/\k) \xto{\iota_{n-1}} \Kt_0(\Var^{\le n}/\k) \xto{\iota_{n}} \Kt_0(\Var^{\le n+1}/\k) \to \cdots.
\end{equation}
The colimit of the direct system defined by~\eqref{eqn-iota}
is the underlying group of 
the Grothendieck ring of varieties
\[
\Kt_0(\Var/\k) = \mathrm{colim}_n \; \Kt_0(\Var^{\le n}/\k).
\]

In this and the next subsections
we will express the kernel and the cokernel of
$$\Kt_0(\Var^{\le n-1}/\k) \xto{\iota_{n-1}} \Kt_0(\Var^{\le n}/\k)$$
for each $n$, 
in terms of information contained in 
the groupoid of birational classes. 
For the cokernel, we have the exact sequence
\begin{equation}\label{eq:K0-seq}
\Kt_0(\Var^{\le n-1}/\k) \overset{\iota_{n-1}}\to \Kt_0(\Var^{\le n}/\k) \overset{\pi_n}\to \Z[\Bir_n/\k] \to 0,
\end{equation}
where 
we define
$\pi_n$ 
on 
the classes of 
finite type schemes
of dimension $\le n$ by
\begin{equation}
\label{eq:def-pi}
\pi_n([X]_n) = \left\{\begin{array}{cc}
     \; [X_1] + \cdots + [X_r], & \dim(X) = n \\
     \; 0, & \dim(X) < n \\
\end{array}\right.,
\end{equation}
where $X_1,\ldots,X_r$ are the irreducible components of $X$ of dimension $n$.
The maps $\iota_{n-1}$ are not injective in general;
in Proposition \ref{prop:ker-iota} we describe their kernels,
see also Remark \ref{rem:iota-injectivity} where we explain
which ones are injective.

The following lemma 
is the starting point
for studying the Grothendieck groups inductively.
See Lemma
\ref{lem:kernelXY} 
for a reformulation of the statement using the motivic invariant
$\ti{c}$.

\begin{lemma}\label{lem:kernelXU}
Let $\Var^n/\k$ denote the set of isomorphism
classes of $n$-dimensional $\k$-varieties.
Then the natural homomorphism 
\begin{equation}\label{eq:plusprojectionU}
\Kt_0(\Var^{\le n-1}/\k) \oplus \Z[\Var^n/\k] \to \Kt_0(\Var^{\le n}/\k)
\end{equation}
is surjective and its kernel
admits the following presentation:
\begin{equation}\label{eq:kernel-XU}
\langle \ \left( [X \setminus U]_{n-1}, 
- [X] + [U] \right) \ \rangle_{U \subset X},
\end{equation}
for all $n$-dimensional
varieties $X$
and open subsets $U \subset X$. 
\end{lemma}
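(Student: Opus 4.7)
The plan is to prove surjectivity by a cut-and-paste reduction, verify that the listed relations lie in the kernel by direct computation, and establish the reverse inclusion by constructing a left inverse homomorphism $\psi$ via stratifications. Let $K$ denote the subgroup~\eqref{eq:kernel-XU} and write $G := (\Kt_0(\Var^{\le n-1}/\k) \oplus \Z[\Var^n/\k])/K$.

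For surjectivity, given a finite-type scheme $Y$ of dimension $\le n$, the cut-and-paste relation applied to the closed immersion $Y_{\mathrm{red}} \hookrightarrow Y$ (with empty open complement) gives $[Y]_n = [Y_{\mathrm{red}}]_n$, so one may assume $Y$ is reduced; then peeling off $n$-dimensional irreducible components one at a time via $[Y]_n = [Y_1]_n + [Y \setminus Y_1]_n$ reduces to the cases $\dim Y < n$ (image of $\iota_{n-1}$) or $Y$ an $n$-dimensional variety (image of $\Z[\Var^n/\k]$). For the easy inclusion of $K$ in the kernel of~\eqref{eq:plusprojectionU}: for $X$ an $n$-dimensional variety and $U \subset X$ a non-empty open, the relation $[X]_n = [U]_n + \iota_{n-1}([X \setminus U]_{n-1})$ in $\Kt_0(\Var^{\le n}/\k)$ shows that the generator $([X \setminus U]_{n-1}, -[X] + [U])$ is sent to zero.

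For the reverse inclusion, I would construct $\psi : \Kt_0(\Var^{\le n}/\k) \to G$ as follows. Every finite-type scheme $Y$ of dimension $\le n$ admits, by Noetherian induction, a finite stratification $Y = \bigsqcup_\alpha S_\alpha$ into locally closed $\k$-varieties; set
\[
\psi([Y]_n) := \sum_\alpha \psi_0(S_\alpha) \in G,
\quad
\psi_0(S) := \begin{cases} (0, [S]) & \text{if } \dim S = n, \\ ([S]_{n-1}, 0) & \text{if } \dim S < n. \end{cases}
\]

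The main obstacle is to show that $\psi$ is well-defined (independent of the stratification) and respects cut-and-paste, so that it descends to $\Kt_0(\Var^{\le n}/\k)$. Independence reduces, via common refinements, to a single refinement step $S_\alpha = \bigsqcup_\beta S_\alpha^\beta$: when $\dim S_\alpha < n$, the equality $[S_\alpha]_{n-1} = \sum_\beta [S_\alpha^\beta]_{n-1}$ is cut-and-paste in $\Kt_0(\Var^{\le n-1}/\k)$; when $\dim S_\alpha = n$, exactly one stratum $S_\alpha^{\beta_0}$ has dimension $n$ (being open dense in the irreducible variety $S_\alpha$), and the defining relation of $K$ applied to $(S_\alpha, S_\alpha^{\beta_0})$ converts $(0, [S_\alpha] - [S_\alpha^{\beta_0}])$ into $([S_\alpha \setminus S_\alpha^{\beta_0}]_{n-1}, 0) = (\sum_{\beta \ne \beta_0} [S_\alpha^\beta]_{n-1}, 0)$, matching $\sum_{\beta \ne \beta_0} \psi_0(S_\alpha^\beta)$. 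Cut-and-paste $[Y]_n = [U]_n + [Z]_n$ for $Y = U \sqcup Z$ is then automatic by concatenating stratifications of $U$ and $Z$. That $\psi$ is a left inverse to the map $G \to \Kt_0(\Var^{\le n}/\k)$ induced by~\eqref{eq:plusprojectionU} follows immediately from evaluation on the two summands, whence the kernel of~\eqref{eq:plusprojectionU} equals $K$.
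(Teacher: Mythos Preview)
Your proof is correct and follows essentially the same route as the paper: both arguments construct an inverse (left inverse) of the map $G \to \Kt_0(\Var^{\le n}/\k)$ by sending a scheme to the sum over a stratification into locally closed varieties, checking well-definedness in $G$ via the relations in $K$. Your writeup is in fact more detailed about the refinement argument than the paper, which simply asserts well-definedness and compatibility with cut-and-paste.
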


Recall that by our conventions, varieties are assumed
to be irreducible, while
$\Kt_0(\Var^{\le n}/\k)$
is defined with $\k$-schemes of finite type as generators. Lemma
\ref{lem:kernelXU} shows in particular that this distinction
is unimportant, and $\Kt_0(\Var^{\le n}/\k)$
could have been defined with irreducible schemes
of finite type as generators.

\begin{proof}
It is clear from definitions that 
\eqref{eq:plusprojectionU}
is surjective and that
its kernel contains
\eqref{eq:kernel-XU},
so that we have a well-defined surjective homomorphism
\begin{equation}\label{eq:plusprojectionU-new}
\frac{\Kt_0(\Var^{\le n-1}/\k) \oplus \Z[\Var^n/\k]}{\langle \ \left( [X \setminus U]_{n-1}, 
- [X] + [U] \right) \ \rangle_{U \subset X}} 
\overset{\beta}\longrightarrow 
\Kt_0(\Var^{\le n}/\k).
\end{equation}
To show that \eqref{eq:plusprojectionU-new} is also
injective,
we construct the inverse homomorphism $\alpha$ of $\gb$.
First we define $\alpha(X)$ for a scheme $X/\k$
of finite type of dimension $\le n$. 
Since the class $[X]_n$ only depends on the reduced structure,
we can assume that $X$ is reduced.
We stratify $X$ into 
irreducible varieties which are locally closed in $X$ and
let $X_1, \dots, X_r$
be its $n$-dimensional strata
($r = 0$ if $\dim(X) < n$).
The element
\[
\alpha(X) := 
\left([X \setminus \bigsqcup_{i=1}^r X_i ]_{n-1}, \sum_{i=1}^r [X_i]\right) 
\]
is well-defined 
(that is, independent of the choice of stratification of $X$) 
when
considered in the left-hand side of \eqref{eq:plusprojectionU-new}.
We verify that 
$\alpha(X) = \alpha(V) + \alpha(X\setminus V)$
for every open subscheme $V \subset X$, so
that $\alpha$ defines a homomorphism
from $\Kt_0(\Var^{\le n}/\k)$.
By construction $\alpha$ and $\beta$ are mutually inverse.
\end{proof}

\subsection{Invariant $\ti{c}(\phi)$ with values in $\Kt_0(\Var^{\le n-1}/\k)$}\label{ssec-leftK}
\hfill

The next result implies Lemma~\ref{lem-homc}.

\begin{theorem}\label{thm-Defc}
There exists a unique assignment
\[
\ti{c}: \Bir(X,Y) \to \Kt_0(\Var^{\le{n-1}}/\k)
\]
defined on the birational maps 
between $n$-dimensional varieties over $\k$ which
satisfies the following two properties.
\begin{enumerate}
    \item If $i: U \hto X$ is the inclusion of an open subset
    then $\ti{c}(i) = [X \setminus U]_{n-1}$.
    \item For every $\phi \in \Bir(X, Y)$, $\psi \in \Bir(Y,Z)$,
    we have $\ti{c}(\psi \circ \phi) = \ti{c}(\phi) + \ti{c}(\psi)$.
\end{enumerate}
Furthermore, we have $c(\phi) = \pi_{n-1}(\ti{c}(\phi))$,
where $c(\phi)$ is defined by \eqref{eq:def-c} and
$\pi_{n-1}$ is defined by \eqref{eq:def-pi}.
\end{theorem}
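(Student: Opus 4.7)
The plan is to observe that every birational map $\phi: X \dto Y$ between $n$-dimensional $\k$-varieties admits a factorization $\phi = j \circ i^{-1}$ with $i: U \hookrightarrow X$ and $j: U \hookrightarrow Y$ open immersions, where $U$ is any open subscheme of $X$ on which $\phi$ is an open immersion, identified with its image via $\phi$. Assuming~(1) and~(2), one has $\tilde{c}(\id_X) = [X \setminus X]_{n-1} = 0$ by~(1), so applying~(2) to $i \circ i^{-1} = \id$ forces $\tilde{c}(i^{-1}) = -\tilde{c}(i) = -[X \setminus U]_{n-1}$. Hence any such $\tilde{c}$ must satisfy
\[
\tilde{c}(\phi) \;=\; \tilde{c}(j) + \tilde{c}(i^{-1}) \;=\; [Y \setminus j(U)]_{n-1} - [X \setminus U]_{n-1},
\]
which establishes uniqueness. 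For existence, I would take this formula as the definition of $\tilde{c}(\phi)$.

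The main obstacle is to show that the right-hand side is independent of the choice of $U$. Any two admissible opens $U_1, U_2 \subset X$ both contain $U_1 \cap U_2$, so it suffices to compare the values at $U$ and at some admissible $U' \subset U$. The cut-and-paste relation~\eqref{eq:cut-and-paste} gives
\[
[X \setminus U']_{n-1} \;=\; [X \setminus U]_{n-1} + [U \setminus U']_{n-1},
\]
and similarly $[Y \setminus j(U')]_{n-1} = [Y \setminus j(U)]_{n-1} + [j(U) \setminus j(U')]_{n-1}$. Since $\phi|_U : U \to j(U)$ is an isomorphism carrying $U \setminus U'$ onto $j(U) \setminus j(U')$, the two added terms are equal as classes in $\Kt_0(\Var^{\le n-1}/\k)$ and cancel in the difference. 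This well-definedness is precisely what forces the invariant to land in the truncated Grothendieck group rather than in a freer group.

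Property~(1) is immediate: if $\phi = i : U \hookrightarrow X$, take $U$ itself as the common open, giving $\tilde{c}(i) = [X \setminus U]_{n-1} - 0 = [X \setminus U]_{n-1}$. For property~(2), given $\phi : X \dto Y$ and $\psi : Y \dto Z$, I would pick an open $V \subset Y$ on which $\psi$ is an open immersion, then an open $U \subset \phi^{-1}(V)$ on which $\phi$ is an open immersion; by construction $\psi \circ \phi$ is also an open immersion on $U$. Writing $V_0 := \phi(U)$ and $W := \psi(V_0)$, additivity $\tilde{c}(\psi \circ \phi) = \tilde{c}(\phi) + \tilde{c}(\psi)$ follows by direct telescoping from
\[
\tilde{c}(\phi) = [Y \setminus V_0]_{n-1} - [X \setminus U]_{n-1}, \quad
\tilde{c}(\psi) = [Z \setminus W]_{n-1} - [Y \setminus V_0]_{n-1},
\]
\[
\tilde{c}(\psi \circ \phi) = [Z \setminus W]_{n-1} - [X \setminus U]_{n-1}.
\]

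Finally, for the identity $c(\phi) = \pi_{n-1}(\tilde{c}(\phi))$, I would take the maximal admissible open $U := X \setminus \Ex(\phi)$; by~\eqref{eq:iso-ex}, its image under $\phi$ equals $Y \setminus \Ex(\phi^{-1})$, so
\[
\tilde{c}(\phi) \;=\; [\Ex(\phi^{-1})]_{n-1} - [\Ex(\phi)]_{n-1}.
\]
Applying $\pi_{n-1}$ isolates, by~\eqref{eq:def-pi}, the irreducible components of dimension exactly $n-1$ of these exceptional sets, namely their codimension one points weighted by the birational classes of the residue fields at their generic points, matching~\eqref{eq:def-c} on the nose.
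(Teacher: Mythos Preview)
Your proof is correct and follows essentially the same route as the paper's: factor $\phi = j \circ i^{-1}$ through a common open to force uniqueness and define $\ti{c}$, reduce well-definedness to the case $U' \subset U$ via intersection and cancel using the cut-and-paste relation, verify additivity by telescoping on a suitably chosen open, and recover $c(\phi)$ by taking $U = X \setminus \Ex(\phi)$ and applying $\pi_{n-1}$. Your presentation is slightly more explicit in places (e.g.\ spelling out $\ti{c}(\id_X)=0$ and $\ti{c}(i^{-1})=-\ti{c}(i)$, and verifying~(1) separately), but the argument is the same.
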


\begin{proof}[Proof of Theorem~\ref{thm-Defc} and
Lemma~\ref{lem-homc}]

Let $\phi \in \Bir(X, Y)$, then there is an open subset $i: U \subset X$
such that $j = \phi|_U$ is an open embedding. We have $\phi = j \circ i^{-1}$
so that properties (1) and (2) determine $\ti{c}(\phi)$ to be
\begin{equation}\label{eq:def-ct}
    \ti{c}(\phi) = \ti{c}(j) - \ti{c}(i) = [Y \setminus j(U)]_{n-1} - [X \setminus U]_{n-1}
\end{equation}
This shows that $\ti{c}$ is unique. 

To prove the existence,
we first check that \eqref{eq:def-ct} is well-defined, that is $\ti{c}(\phi)$
is independent of
the choice of $U \subset X$. Let $i': U' \subset X$ be another open subset
such that $j' = \phi|_{U'}$ is an open embedding.
Passing to the intersection of 
$U$ and $U'$ we may assume that $U' \subset U$.
Using the defining relations of $\Kt_0(\Var^{\le n-1}/\k)$
we obtain that
\[
\ti{c}(j') - \ti{c}(i') = [Y \setminus j'(U')]_{n-1} - [X \setminus U']_{n-1} = 
[Y \setminus j(U)]_{n-1} - [X \setminus U]_{n-1} = 
\ti{c}(j) - \ti{c}(i)
\]
hence $\ti{c}$ is well-defined.

To check (2), let $\phi \in \Bir(X, Y)$ and $\psi \in \Bir(Y,Z)$,
and choose $U \subset X$ such that both $j = \phi|_U$ and 
$j'= \psi|_{j(U)}$ are open embeddings. Then
$$\ti{c}(\psi) + \ti{c}(\phi)
= [Z \setminus j'(j(U))]_{n-1} - [Y \setminus j(U)]_{n-1}  +
[Y \setminus j(U)]_{n-1} - [X \setminus U]_{n-1} = \ti{c}(\psi 
\circ \phi).$$

For the final claim, by \eqref{eq:iso-ex} we can factorize $\phi$ through
the inclusions
\[
X \hlto X \setminus \Ex(\phi) \simeq Y \setminus \Ex(\phi^{-1}) \hto Y,
\]
so that by (1) and (2) we have
$\ti{c}(\phi) = [\Ex(\phi^{-1})]_{n-1} - [\Ex(\phi)]_{n-1}$.
It follows from the definitions that $\pi_{n-1}(\ti{c}(\phi)) = c(\phi)$. 
\end{proof}

\subsection{Invariant $\ti{c}$ and
the structure of the Grothendieck rings}
\hfill

The material of this section makes explicit
the first page of
the spectral sequences of
\cite[Section 3]{Zakharevich}
and generalizes \cite[\S 3.2]{LSZ20}.

When we put
$Y = X$ in Theorem 
\ref{thm-Defc},
we obtain a homomorphism
\begin{equation}\label{wtc-BirX}
\ti{c}|_{\Bir(X)} : \Bir(X) \to \Kt_0(\Var^{\le{n-1}}/\k),
\end{equation}
which, as we will see, 
is in general nonzero.
We note that it is crucial that we consider
$\Kt_0(\Var^{\le n-1}/\k)$, not the full Grothendieck
ring as the target of $\ti{c}|_{\Bir(X)}$.
Indeed, consider the homomorphism
\begin{equation}\label{eq:iota}
\Kt_0(\Var^{\le n-1}/\k) \overset{\iota_{n-1}}\to \Kt_0(\Var^{\le n}/\k).
\end{equation}
The following lemma shows in particular
that $\iota_{n-1} \circ \ti{c}|_{\Bir(X)} = 0$.

\begin{lemma}\label{lem:kernelXY}
The kernel of \eqref{eq:plusprojectionU}
admits the following presentation:
\begin{equation}\label{eq:kernel-XY}
\langle \ \left( \ti{c}(\phi), 
-[Y] + [X] \right) \ \rangle_{\phi \in \Bir(X,Y)}
\end{equation}
for all birational isomorphisms $\phi$
between all irreducible $n$-dimensional
varieties $X$, $Y$.
\end{lemma}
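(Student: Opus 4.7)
The plan is to deduce this from Lemma~\ref{lem:kernelXU}, which already provides a presentation of the kernel in terms of open inclusions $U \subset X$. Since the two candidate generating sets should cut out the same subgroup of $\Kt_0(\Var^{\le n-1}/\k) \oplus \Z[\Var^n/\k]$, the strategy is to exhibit each family of generators as a $\Z$-linear combination of the other.

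For the easy direction, I would observe that every open inclusion $i : U \hto X$ with $\dim X = n$ is itself a birational map in $\Bir(U,X)$, and by property (1) of Theorem~\ref{thm-Defc} we have $\ti{c}(i) = [X \setminus U]_{n-1}$. Therefore the generator
\[
([X \setminus U]_{n-1}, -[X] + [U]) = (\ti{c}(i), -[X] + [U])
\]
of the presentation from Lemma~\ref{lem:kernelXU} is literally one of the generators in~\eqref{eq:kernel-XY}. So the subgroup~\eqref{eq:kernel-XY} contains the subgroup~\eqref{eq:kernel-XU}, which by Lemma~\ref{lem:kernelXU} is the full kernel of~\eqref{eq:plusprojectionU}.

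For the other direction, I would write an arbitrary generator $(\ti{c}(\phi), -[Y] + [X])$ for $\phi \in \Bir(X,Y)$ as a difference of generators of the Lemma~\ref{lem:kernelXU} type, thereby showing that every element of~\eqref{eq:kernel-XY} is also a legitimate element of the kernel (and, a posteriori, lies in the subgroup~\eqref{eq:kernel-XU}). Concretely, choose a Zariski open $U \subset X$ such that $j := \phi|_U : U \hto Y$ is an open embedding, and let $i : U \hto X$ denote the inclusion, so that $\phi = j \circ i^{-1}$. Applying property (2) of Theorem~\ref{thm-Defc},
\[
\ti{c}(\phi) \;=\; \ti{c}(j) - \ti{c}(i) \;=\; [Y \setminus j(U)]_{n-1} - [X \setminus U]_{n-1},
\]
and therefore
\[
(\ti{c}(\phi), -[Y] + [X]) \;=\; ([Y \setminus j(U)]_{n-1}, -[Y] + [U]) \;-\; ([X \setminus U]_{n-1}, -[X] + [U]),
\]
which is a difference of two generators of~\eqref{eq:kernel-XU}. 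Combined with the first step, this yields equality of the two subgroups and hence the claim.

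I expect no serious obstacle: the only thing to check carefully is that the choice of $U$ in the factorization $\phi = j \circ i^{-1}$ does not matter, but this is exactly the well-definedness of $\ti{c}(\phi)$ already established in the proof of Theorem~\ref{thm-Defc}, so the argument is essentially bookkeeping together with the compatibility $\ti{c}(i) = [X \setminus U]_{n-1}$ for open inclusions.
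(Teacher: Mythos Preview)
Your proposal is correct and follows essentially the same approach as the paper: both arguments rewrite the Lemma~\ref{lem:kernelXU} generators as $(\ti{c}(i),-[X]+[U])$ for open inclusions $i:U\hookrightarrow X$, observe that these are special cases of \eqref{eq:kernel-XY}, and conversely decompose an arbitrary $\phi\in\Bir(X,Y)$ through open embeddings to express $(\ti{c}(\phi),-[Y]+[X])$ as a $\Z$-combination of the Lemma~\ref{lem:kernelXU} generators. Your write-up is slightly more explicit in displaying the difference formula, but the content is the same.
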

\begin{proof}
The subgroup \eqref{eq:kernel-XU} can be rewritten as
\begin{equation}\label{eq:kernel-UX}
\langle \ \left( \ti{c}(j:U \hto X), 
- [X] + [U] \right) \ \rangle_{U \subset X}
\end{equation}
where 
$U \subset X$ runs over 
open subsets of all irreducible $n$-dimensional
varieties $X$. 
It remains to show that the subgroups \eqref{eq:kernel-UX}
and \eqref{eq:kernel-XY} coincide.
It is clear that \eqref{eq:kernel-UX} is a subgroup of
\eqref{eq:kernel-XY}. Conversely, every element of
\eqref{eq:kernel-XY} can be written as a combination
of elements
from \eqref{eq:kernel-UX} after decomposing
$\phi$ as a composition of open embeddings and their inverses.
\end{proof}

\begin{proposition}\label{prop:ker-iota}
We have
\begin{equation}\label{eq:ker-iota}
\Ker(\iota_{n-1}) = \sum_{X \in \Bir_n/\k} \ti{c}(\Bir(X))
\end{equation}
so that there is an exact sequence
\begin{equation}\label{eq:seq-K0}
0 \to \Image(\ti{c})_n \to \Kt_0(\Var^{\le n-1}/\k) \overset{\iota_{n-1}}\to \Kt_0(\Var^{\le n}/\k) \overset{\pi_n}\to \Z[\Bir_n/\k] \to 0,
\end{equation}
where we write $\Image(\ti{c})_n$ for the right hand side
of \eqref{eq:ker-iota}.
\end{proposition}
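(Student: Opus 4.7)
The plan is to combine the right-exact sequence \eqref{eq:K0-seq}, which already identifies the cokernel of $\iota_{n-1}$, with a separate computation of $\Ker(\iota_{n-1})$ via Lemma~\ref{lem:kernelXY}; the equality \eqref{eq:ker-iota} will then imply \eqref{eq:seq-K0} by splicing. I would prove \eqref{eq:ker-iota} as two inclusions. The easy direction, $\sum_X \ti{c}(\Bir(X)) \subseteq \Ker(\iota_{n-1})$, is a direct verification: for $\phi \in \Bir(X)$, formula \eqref{eq:def-ct} gives $\ti{c}(\phi) = [X \setminus j(U)]_{n-1} - [X \setminus U]_{n-1}$, and after applying $\iota_{n-1}$ the cut-and-paste relations reduce both terms to $[X]_n - [U]_n$, so they cancel.

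For the opposite inclusion, take $a \in \Ker(\iota_{n-1})$, so that $(a, 0)$ lies in the kernel of the surjection \eqref{eq:plusprojectionU}. By the presentation in Lemma~\ref{lem:kernelXY}, I can write
\[
(a, 0) = \sum_i n_i \bigl( \ti{c}(\phi_i), \, [X_i] - [Y_i] \bigr)
\]
for birational maps $\phi_i: X_i \dto Y_i$ between irreducible $n$-dimensional varieties. The idea is to reduce each $\phi_i$ to a self-map at a fixed representative of its birational class. Choose, once and for all, a representative $X^\alpha$ for each $\alpha \in \Bir_n/\k$, and for every irreducible $V$ birational to $X^\alpha$ a birational map $f_V: V \dto X^\alpha$ with $f_{X^\alpha} = \id$. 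Let $\alpha_i$ be the common birational class of $X_i$ and $Y_i$, and set $g_i := f_{Y_i} \circ \phi_i \circ f_{X_i}^{-1} \in \Bir(X^{\alpha_i})$. By Theorem~\ref{thm-Defc}(2) together with $\ti{c}(f_V^{-1}) = - \ti{c}(f_V)$ (which follows from $\ti{c}(\id) = 0$ and additivity), we get
\[
\ti{c}(\phi_i) = \ti{c}(g_i) + \ti{c}(f_{X_i}) - \ti{c}(f_{Y_i}).
\]

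The key cancellation is that, after weighting by $n_i$ and summing, the correction terms assemble as $\sum_V \ti{c}(f_V) \bigl( \sum_{i: X_i = V} n_i - \sum_{i: Y_i = V} n_i \bigr)$, which vanishes term by term because the relation $\sum_i n_i ([X_i] - [Y_i]) = 0$ in $\Z[\Var^n/\k]$ forces each inner coefficient to be zero. Hence $a = \sum_i n_i \ti{c}(g_i)$, and grouping the $g_i$ by $\alpha_i$ and using that $\ti{c}: \Bir(X^\alpha) \to \Kt_0(\Var^{\le n-1}/\k)$ is a group homomorphism into an abelian group, each grouped sum takes the form $\ti{c}(G^\alpha)$ for some $G^\alpha \in \Bir(X^\alpha)$. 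Thus $a \in \sum_\alpha \ti{c}(\Bir(X^\alpha))$, which gives \eqref{eq:ker-iota}, and splicing with \eqref{eq:K0-seq} yields \eqref{eq:seq-K0}. The \emph{main obstacle} is organizing the bookkeeping cleanly via the auxiliary maps $f_V$: one must verify that the choices of $f_V$ are immaterial and that the correction terms cancel exactly thanks to the boundary relation on the second coordinate. Once that combinatorial setup is in place, the identification of $\Ker(\iota_{n-1})$ is mechanical.
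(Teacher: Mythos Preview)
Your proposal is correct and follows the same overall strategy as the paper: both deduce the result from Lemma~\ref{lem:kernelXY}, identifying $\Ker(\iota_{n-1})$ with the intersection of the kernel of \eqref{eq:plusprojectionU} with the first summand, and then reducing a relation with vanishing second coordinate to a sum of $\tc$ of self-maps. The difference lies only in that combinatorial reduction. The paper does it more directly: after absorbing signs via $\phi \mapsto \phi^{-1}$ and repeating terms so that all coefficients are $+1$, the condition $\sum_i([X_i]-[Y_i]) = 0$ in the free group $\Z[\Var^n/\k]$ forces a permutation $\sigma$ with $Y_i \simeq X_{\sigma(i)}$; one then composes the $\phi_i$ along each $\sigma$-cycle to obtain a single self-map per orbit, with no auxiliary anchor maps $f_V$ needed. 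Your approach works too, but note a small subtlety in the bookkeeping: the relation in $\Z[\Var^n/\k]$ only controls coefficients per \emph{isomorphism} class, so for the correction terms $\sum_i n_i(\tc(f_{X_i}) - \tc(f_{Y_i}))$ to cancel you must choose the $f_V$ so that $\tc(f_V)$ depends only on the isomorphism class of $V$ (e.g.\ factor $f_V$ through a fixed isomorphism to a chosen representative). The paper's permutation trick sidesteps this choice entirely.
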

\begin{proof}
$\Ker(\iota_{n-1})$ coincides with kernel of $\eqref{eq:plusprojectionU}$
intersected with $\Kt_0(\Var^{n-1}/\k)$.
Hence by Lemma \ref{lem:kernelXY} every element in
$\Ker(\iota_{n-1})$ has the form
\begin{equation}\label{eq:sum-tcphi}
\sum_{i=1}^r \tc(\phi_i)
\end{equation}
with $\phi_i: X_i \dto Y_i$
and $\sum_{i=1}^r ([X_i] - [Y_i]) = 0 \in \Z[\Var^n/\k]$.
Since the latter is a free abelian group on the isomorphism
classes of $n$-dimensional irreducible varieties,
there is a permutation $\sigma \in S_r$ such that
$Y_i \simeq X_{\sigma(i)}$. 
Let $i \in \{1, \dots, r\}$ and let $\ell$ be the length
of $\sigma$-orbit of $i$. Then we have a composition
\[
\psi_i: X_i \overset{\phi_i}{\dto} X_{\sigma(i)} \overset{\phi_{\sigma(i)}}{\dto} X_{\sigma^2(i)} \overset{\phi_{\sigma^2(i)}}{\dto} \dots
\overset{\phi_{\sigma^{\ell-1}(i)}}{\dto}
X_{\sigma^\ell(i)}  =  X_i. 
\]
This allows to rewrite \eqref{eq:sum-tcphi}
as a sum of $\tc(\psi_i)$, with $\psi_i \in \Bir(X_i)$,
and $i$ running over a set of representative classes
for orbits of $\sigma$ on $\{1, \dots, r\}$.
\end{proof}

\begin{remark}\label{rem:iota-injectivity}
Using Proposition \ref{prop:ker-iota}
and
Theorem \ref{thm:surfaces-main},
one can prove that 
$\iota_0$ and
$\iota_1$ are injective
for all perfect fields $\k$ \cite[Corollary 3.10]{LSZ20}.

On the other hand when $n \ge 2$, geometric constructions in the following
section 
shows in an explicit way that $\iota_n$ is not injective
over various fields $\k$;
see Theorems \ref{thm:ell-curves}, \ref{thm:K3}, 
\ref{thm:CY3}, and Theorem~\ref{thm:bir-generation}.
\end{remark}

\section{Geometric constructions}
\label{sec:geometric-constructions}

\subsection{Strong birational isomorphisms}
\hfill

Recall that two varieties $X$ and $Y$ are isomorphic
in codimension one if there exists a birational
map $\gamma: X \dto Y$ such that
$\Ex(\gamma)$ and $\Ex(\gamma^{-1})$ have codimension $\ge 2$.
We introduce a motivic version of this notion:

\begin{definition}\label{def:strong-birat}
We say that two varieties
$X$ and $Y$ are {\it strongly
birational}
if there exists $\gamma \in \Bir(X,Y)$
such that 
$c(\gamma) = 0$.
We say that $X$ is 
{\it strongly rational}
if $X$ is strongly birational to $\P^n$.
\end{definition}

Note that strong birationality
is an equivalence relation by the
additivity of $c$ (Lemma \ref{lem-homc}).
Furthermore, isomorphism in codimension one
implies strong birationality.

\medskip

When $\k$ is a field of characteristic zero,
it follows from Proposition
\ref{prop:Picnb} that strongly birational varieties
have equal ranks of their
geometric N\'eron--Severi groups.
In particular, a strongly rational
variety has geometric N\'eron--Severi group 
 of
rank one. 

The following lemma provides some examples 
of strongly rational varieties.

\begin{lemma}\label{lem:GP-strongly-rational}
If $X$ is a smooth projective variety
containing an irreducible rational
divisor $D \subset X$
such that $U := X \setminus D$
is isomorphic to $\A^n$,
then $X$ is strongly rational.
In particular, if $G$ is a split reductive
algebraic group over a field $\k$,
then every 
homogeneous space
$X = G/P$
of Picard rank $\rho(X) = 1$
is strongly rational.
\end{lemma}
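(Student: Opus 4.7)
The plan is to exhibit an explicit birational map $\gamma: X \dto \P^n$ with $c(\gamma)=0$, built directly from the open affine cell. Writing $i: U \hto X$ for the given open embedding with complement $D$, and choosing an identification $U \simeq \A^n \subset \P^n$ whose complement is a hyperplane $H \simeq \P^{n-1}$, I would set $j: U \hto \P^n$ to be the resulting open embedding and take $\gamma \cnec j \circ i^{-1}$.

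The key computation uses the enhanced invariant $\ti{c}$ from Theorem \ref{thm-Defc}. By property (1), $\ti{c}(i) = [D]_{n-1}$ and $\ti{c}(j) = [\P^{n-1}]_{n-1}$; by property (2) applied to $\gamma = j \circ i^{-1}$, this gives
\[
\ti{c}(\gamma) \; = \; [\P^{n-1}]_{n-1} - [D]_{n-1} \; \in \; \Kt_0(\Var^{\le n-1}/\k).
\]
Projecting via $\pi_{n-1}$ using the final statement of Theorem \ref{thm-Defc}, we obtain $c(\gamma) = [\P^{n-1}] - [D] \in \Z[\Bir_{n-1}/\k]$. Since $D$ is assumed rational of dimension $n-1$, the two birational classes coincide and $c(\gamma) = 0$. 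Thus $X$ is strongly birational to $\P^n$, which is the definition of strongly rational.

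For the second assertion, I would reduce to the hypothesis of the first by invoking the Bruhat decomposition for a split reductive $G$. Choose a Borel $B \subset P$ and let $U^- \subset G$ be the unipotent radical of the opposite Borel; the $U^-$-orbit of the base point is open in $X = G/P$ and isomorphic to $\A^n$ as a $\k$-scheme, and its complement is a union of codimension-one Schubert subvarieties. The assumption $\rho(X) = 1$ forces this complement to consist of a single irreducible divisor $D$, since the codimension-one Schubert classes freely generate $\Pic(X)$. Finally, $D$ is itself a Schubert variety and hence admits a cellular decomposition, so in particular is rational. The first part of the lemma then applies.

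The only subtle point, and the step I would check most carefully, is the last one: namely that Picard rank one really implies a single Schubert divisor and that this divisor is rational over the (possibly non-closed) field $\k$. Both facts hold because the Bruhat decomposition and the Schubert cell structure are defined over $\k$ for split $G$, so the cells are genuine affine spaces $\A^m_\k$ and the codimension-one Schubert subvarieties form a $\Z$-basis of $\Pic(G/P)$ defined over $\k$.
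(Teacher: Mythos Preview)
Your proposal is correct and follows essentially the same route as the paper: both build the birational map $X \dto \P^n$ from the open cell, compute $c$ as $[\P^{n-1}]-[D]=0$, and for $G/P$ invoke the Bruhat decomposition together with $\rho(X)=1$ to get a single rational Schubert divisor as complement of the big cell. The only cosmetic difference is that you route the computation through $\ti{c}$ before projecting, whereas the paper computes $c$ directly; and for the Picard-rank-equals-number-of-Schubert-divisors step the paper cites \cite[Example~1.9.1]{Fulton} rather than arguing it from the cell basis.
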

\begin{proof}
Consider the birational map 
$\phi :  X \dto \P^n$
induced by the isomorphism $U \simeq \A^n$.
We have
\[
\phi: X \hlto U \simeq \A^n \hto \P^n,
\]
so $c(\phi) = [\P^{n-1}] - [D] = 0$.
Thus $X$ is strongly rational.

Now we prove the second statement. 
A homogeneous space $X$ admits
the Bruhat stratification
into affine spaces \cite[Proposition 1.3]{Kock}. 
It has a dense open stratum isomorphic to $\A^n$ and 
the number of codimension one strata 
$\A^{n-1}$ in the Bruhat stratification
is equal to the Picard rank $\rho(X) = 1$ by~\cite[Example 1.9.1]{Fulton}.
The closure of the
$\A^{n-1}$ stratum  
is therefore the complement of the dense stratum $\A^n$.
It follows from the first part of the lemma that $X$
is strongly rational.
\end{proof}

\subsection{L-links}
\hfill

Let $X$, $Y$ be 
irreducible geometrically reduced
varieties.

\begin{Def} 
An \emph{L-link} with centers $X$ and $Y$
is a diagram 
\begin{equation}\label{eq:L-link}
 \xymatrix{
    & & T \ar[dl]_{\Bl_{X}} \ar[dr]^{\Bl_{Y}} & & \\
    X \ar@{^{(}->}[r]^{\text{closed}} & \XX \ar@{-->}[rr]^\psi &  & \XX' & \ar@{_{(}->}[l]_{\text{closed}} Y\\
}
\end{equation}
of blow ups with centers $X$ and $Y$,
subject to the following conditions:
\begin{enumerate}
    \item[(L1)] $\XX$ and $\XX'$ are 
 varieties smooth in codimension one,
 such that $[\XX] = [\XX']$ in $\Kt_0(\Var/\k)$.
    \item[(L2)] 
    $\XX$ and $\XX'$ are strongly birational.
    \item[(L3)]
    The exceptional divisors of
    $\Bl_{X}$ and $\Bl_{Y}$ are irreducible.
\end{enumerate}
\end{Def}

We say that the L-link $\psi$~\eqref{eq:L-link} is smooth 
(resp. projective)
if $X$, $Y$, $\cX$, $\cX'$, $T$
are smooth (resp. projective).
An L-link $\psi$
is called \emph{motivically non-trivial} if $c(\psi) \ne 0$.
For instance, this is the case when 
the link is smooth and
$X$ is not stably birational to $Y$.

\begin{example}
Special Cremona transformations~\cite{Crauder-Katz} give rise
to examples of L-links with
$\cX = \cX' = \P^n$
and $\gamma = \Id$. 
\end{example}

L-links are closely related to L-equivalence.
Recall that smooth
projective connected
varieties
$X$, $Y$
are
called
L-equivalent
if $\L^d([X]-[Y]) = 0$
in $\Kt_0(\Var/\k)$
for some $d \ge 0$.

\begin{lemma}\label{lemma:L-link}
Given an L-link \eqref{eq:L-link},
let $m = \codim_X(\XX)$,
$m' = \codim_Y(\XX')$. 
\begin{enumerate}
\item If the link is smooth, we have 
$\L([\P^{m-2}][X] - [\P^{m'-2}][Y]) = 0$ in $\Kt_0(\Var/\k)$.
\item Let $\phi = \gamma^{-1} \psi 
\in \Bir(\XX)$, where $\gamma$ comes from Definition \ref{def:strong-birat}.
Then 
$$c(\phi) = c(\psi) = [\P^{m-1} \times X] - [\P^{m'-1} \times Y]$$
in $\Z[\Bir/\k]$.
In particular, if the link \eqref{eq:L-link} is motivically non-trivial
(e.g. if $X$ and $Y$ are not stably
birational), then $c(\phi) \ne 0$.
\end{enumerate}
\end{lemma}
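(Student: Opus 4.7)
The plan is to reduce both parts to two basic additivities: additivity of $c$ (and its enhancement $\tilde c$) under composition of birational maps (Lemma~\ref{lem-homc} and Theorem~\ref{thm-Defc}), and additivity of classes in $\Kt_0(\Var/\k)$ under cut-and-paste. The main geometric ingredient is an explicit formula for $\tilde c$ of each of the two blow up morphisms in the diagram~\eqref{eq:L-link}.

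For part (2), write $\psi = q \circ p^{-1}$ where $p : T \to \cX$ and $q : T \to \cX'$ are the two blow ups in~\eqref{eq:L-link}, with irreducible exceptional divisors $E_X$ and $E_Y$ by (L3); set $n = \dim \cX$. We may assume $m, m' \ge 2$, since otherwise the corresponding blow up is an isomorphism and the formula is trivial. The morphism $p$ factors as
\[
T \ \hookleftarrow \ T \setminus E_X \ \xrightarrow{\sim} \ \cX \setminus X \ \hookrightarrow \ \cX,
\]
so the two defining properties of $\tilde c$ in Theorem~\ref{thm-Defc} give $\tilde c(p) = [X]_{n-1} - [E_X]_{n-1}$, and similarly $\tilde c(q) = [Y]_{n-1} - [E_Y]_{n-1}$. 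Additivity of $\tilde c$ then yields
\[
\tilde c(\psi) = \tilde c(q) - \tilde c(p) = [E_X]_{n-1} - [E_Y]_{n-1} + [Y]_{n-1} - [X]_{n-1}.
\]
Since $\dim X = n - m \le n - 2$ and $\dim Y = n - m' \le n - 2$, the terms $[X]_{n-1}$ and $[Y]_{n-1}$ are killed upon applying $\pi_{n-1}$, so $c(\psi) = [E_X] - [E_Y]$ in $\Z[\Bir_{n-1}/\k]$. The exceptional divisor of a blow up along an irreducible center is a $\P^{m-1}$-bundle over that center, Zariski locally trivial on a dense open, hence birational to the product; thus $[E_X] = [\P^{m-1} \times X]$ and $[E_Y] = [\P^{m'-1} \times Y]$ in $\Z[\Bir/\k]$. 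Finally, for $\phi = \gamma^{-1} \psi$, additivity of $c$ combined with $c(\gamma) = 0$ from (L2) gives $c(\phi) = -c(\gamma) + c(\psi) = c(\psi)$.

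Part (1) is even quicker. The smoothness assumption makes available the usual blow up formula $[T] = [\cX] - [X] + [E_X]$ in $\Kt_0(\Var/\k)$ via cut-and-paste, combined with $[E_X] = [\P^{m-1}][X]$ for the Zariski-locally-trivial projective bundle. Using $[\P^{m-1}] - 1 = \L[\P^{m-2}]$, this rearranges to $[T] - [\cX] = \L [\P^{m-2}] [X]$, and likewise $[T] - [\cX'] = \L [\P^{m'-2}] [Y]$. Subtracting and invoking $[\cX] = [\cX']$ from (L1) produces the desired identity $\L([\P^{m-2}][X] - [\P^{m'-2}][Y]) = 0$. The final non-triviality assertion is immediate from the formula of part (2): if $X$ and $Y$ are not stably birational, then $\P^{m-1} \times X$ and $\P^{m'-1} \times Y$ represent distinct birational classes, so $c(\phi) \ne 0$. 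The entire argument is essentially mechanical; the only mildly delicate step is the direct computation of $\tilde c$ on the blow up morphisms through Theorem~\ref{thm-Defc}, and the observation that the low-dimensional center classes $[X]_{n-1}$, $[Y]_{n-1}$ drop out after projection to $\Z[\Bir_{n-1}/\k]$.
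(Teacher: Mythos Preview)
Your proof is correct and follows the same approach as the paper: compute $[T]$ two ways for (1), and use additivity of $c$ together with the birational type of the exceptional divisors for (2). The only place the paper is more careful is in justifying that $E_X$ is birational to $\P^{m-1}\times X$: in the generality of an L-link, $\cX$ is only smooth in codimension one and $X$ only geometrically reduced, so the paper explicitly invokes (L1) and geometric reducedness to find a codimension-$\ge 2$ closed $Z\subset\cX$ with $\cX\setminus Z$ and $X\setminus Z$ smooth, over which the exceptional divisor is the projectivized normal bundle, and then uses (L3) to conclude; you assert this step without tying it to the hypotheses, which is not a gap but would be worth making explicit.
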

\begin{proof}
(1) follows by computing the class
of $[T]$ in two ways using the two
blow up morphisms and (L1).
For (2), we have $c(\phi) = c(\psi)$ by
(L2). 
Since $\cX$ is smooth in codimension one
and $X$ is geometrically reduced,
there exists a closed subscheme $Z \subset \cX$ of codimension two
such that both $\cX \bss Z$ and $X \bss Z$ are smooth.
As the exceptional divisor $E$ of $\Bl_{X}$
is irreducible by (L3), 
it follows that $E$ is birational to $\P^{m-1} \times X$.
Similarly,
the exceptional divisor of $\Bl_{Y}$
is birational to $\P^{m'-1} \times Y$.
Thus by Lemma~\ref{lem-homc}, we have
$c(\phi) = c(\psi) =  [\P^{m-1} \times X] - [\P^{m'-1} \times Y]$.
\end{proof}

\subsection{Elliptic curves}\label{ssec-dim3}
\hfill

Let $C$ be a smooth projective
connected
curve and let $k \in \Z$. 
Let $\Jac^k(C)$ denote
the Jacobian of divisors of degree $k$ on $C$.
Each $\Jac^k(C)$ is a torsor under
the algebraic group
$\Jac^0(C)$. 
If $C$ is a curve of genus one,
then
$\Jac^k(C)$
are twisted forms of $C$
which are not necessarily isomorphic to it.
We concentrate on curves $C$
of genus one and degree five for which
the only interesting Jacobian is $\Jac^2(C)$ \cite{ShinderZhang}.

\begin{proposition}\label{prop:elliptic}
Let $C$ be a genus one curve 
having
a divisor of degree $5$,
and let $C' = \Jac^2(C)$.
We have the following smooth projective L-link:
\begin{equation}\label{eq:L-link-curves}
 \xymatrix{
    & & T \ar[dl]_{\Bl_{C}} \ar[dr]^{\Bl_{C'}} & & \\
    C \ar@{^{(}->}[r] & Q^3 \ar@{-->}[rr]^\psi &  & \P^3 & \ar@{_{(}->}[l] C'.\\
}
\end{equation}
Here $Q^3$ is a smooth quadric threefold.
\end{proposition}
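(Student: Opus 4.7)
The plan is to realize this L-link as a classical Sarkisov-type link between the quadric threefold and $\P^3_\k$, with elliptic quintic curves as blow-up centers. By Riemann--Roch on a genus-one curve, the degree-$5$ divisor embeds $C$ in $\P^4_\k$ as an elliptic normal quintic. A cohomology computation gives $h^0(\P^4_\k, \cI_C(2)) = 5$, so $C$ lies on a three-dimensional projective family of quadrics. A generic member $Q^3$ of this family is smooth and contains $C$, by a Bertini-type argument with mild care over small fields.

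Next, the birational map $\psi : Q^3 \dashrightarrow \P^3$ is defined by the linear system $|2H_{Q^3} - C|$ of quadric sections of $Q^3$ through $C$, which has projective dimension three. Blowing up $C$ resolves the indeterminacy to $\pi : T \to Q^3$, and the composite $g : T \to \P^3$ is expected to be the blow-up along a smooth curve $C' \subset \P^3$. Birationality of $g$ follows from the intersection computation $(2H - E)^3 = 1$ on $T$; the divisorial nature of the contraction comes from the Mori theory of smooth threefolds; the adjunction formula then yields the exceptional divisor class $E' = 5H - 3E$, and the intersection numbers $(E')^3 = -20$ and $(E')^2 \cdot (2H - E) = -5$ identify $C'$ as a smooth curve of genus one and degree five in $\P^3$.

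The identification $C' \simeq \Jac^2(C)$, which I expect to be the main obstacle of the proof, proceeds geometrically. A fiber class $[F]$ of $E' \to C'$ satisfies $F \cdot g^*(H_{\P^3}) = 0$ and $F \cdot E' = -1$; solving in the basis $H, E$ of $\mathrm{Pic}(T)$ gives $F \cdot H = 1$ and $F \cdot E = 2$. Hence each $\P^1$-fiber of $E' \to C'$ maps under $\pi$ isomorphically onto a line in $\P^4_\k$ contained in $Q^3$ and meeting $C$ in a divisor of degree two. Thus $C'$ parametrizes the family of such secant lines on $Q^3$, and the assignment sending such a secant to its intersection divisor yields a morphism $C' \to \Sym^2(C) \to \Jac^2(C)$. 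The proof that this composition is an isomorphism, which must be made Galois-equivariant over a non-closed base field, is in the spirit of \cite{ShinderZhang}.

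Finally, the L-link conditions are routine to verify: (L1) follows from $[Q^3] = 1 + \L + \L^2 + \L^3 = [\P^3]$ via the Bruhat decompositions of the two varieties; (L2) from Lemma~\ref{lem:GP-strongly-rational}, since $Q^3$ and $\P^3$ are homogeneous of Picard rank one, hence strongly rational; (L3) from the fact that both exceptional divisors are $\P^1$-bundles over the smooth irreducible curves $C$ and $C'$, hence irreducible.
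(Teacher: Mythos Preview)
Your approach differs from the paper's: instead of working directly on $Q^3$, the paper first constructs the Crauder--Katz quadro-cubic Cremona transformation $\psi': \P^4 \dashrightarrow \P^4$ (given by $|2H-C|$), resolved by a single blow-up with center $C$ on one side and the secant surface $S \simeq \Sym^2(C)$ on the other, and then restricts to a smooth quadric $Q^3 \supset C$ in the source $\P^4$. The image $\psi'(Q^3)$ is a hyperplane $\P^3$ in the target, and $C' = S \cap \P^3$. Since $S \subset \P^4$ is the quintic elliptic scroll over $\Jac^2(C)$ with lines as rulings, a general hyperplane section is a section of $S \to \Jac^2(C)$, yielding $C' \simeq \Jac^2(C)$ immediately. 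Your direct Mori-theoretic analysis on $\Bl_C(Q^3)$ is valid in outline and recovers the same link, but the identification $C' \simeq \Jac^2(C)$---which you rightly flag as the crux---is harder to complete this way: you would need to show that the map sending a bisecant line on $Q^3$ to its intersection divisor on $C$ has degree one over $\Jac^2(C)$, and this does not follow from the intersection numbers alone.

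There is also a genuine gap in your verification of (L1) and (L2). Invoking the Bruhat decomposition of $Q^3$ and Lemma~\ref{lem:GP-strongly-rational} presupposes that $Q^3$ is a \emph{split} quadric, equivalently that it contains a $\k$-line. Over a non-closed field this is not automatic: $C$ need not have a $\k$-point (indeed in the intended application it has none), and while Springer's theorem applied to an odd-degree closed point of $C$ shows $Q^3$ has a $\k$-point, it does not produce a $\k$-line. The paper supplies precisely this missing step: under $\psi$, general lines on $Q^3$ correspond to $5$-secant conics to $C' \subset \P^3$, and such conics arise from general $\k$-rational planes in $\P^3$ when $\k$ is infinite (with a separate Chevalley--Warning argument over finite fields). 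Without this, neither $[Q^3] = [\P^3]$ nor the strong rationality of $Q^3$ is justified.
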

\begin{proof}
By Riemann-Roch, the linear
system of a divisor
of degree $5$ on $C$ 
gives rise to a degree $5$ embedding $C \subset \P^4$,
and it is well-known that 
$C$ is the scheme-theoretic intersection 
of the quadrics containing it, see e.g. \cite[Proposition 4.2(ii)]{Fisher-quintics}. 
Thus the map given by the linear system $|2H - C|$
is resolved by blowing up $C$, and we obtain
the Crauder--Katz quadro-cubic transformation \cite[Theorem 2.2(ii)]{Crauder-Katz}
\begin{equation}\label{eq:L-link-curves-big}
 \xymatrix{
    & & T' \ar[dl]_{\Bl_{C}} \ar[dr]^{\Bl_{S}} & & \\
    C \ar@{^{(}->}[r] & \P^4 \ar@{-->}[rr]^{\psi'} &  & \P^4 & \ar@{_{(}->}[l] S \\
}
\end{equation}
where $S$ is the surface
parameterizing the secant lines of $C$. 
Thus $S$ is isomorphic to $\Sym^2(C)$ 
(cf Proof of \cite[Theorem 3.3(A)]{Crauder-Katz})
and
is embedded as a quintic ruled surface $\Sym^2(C) \to \Jac^2(C)$
into $\P^4$.

To obtain the L-link 
\eqref{eq:L-link-curves} we take a hyperplane
section of $\eqref{eq:L-link-curves-big}$ as follows.
Take any smooth quadric $Q^3 \subset \P^4$ containing $C$
and restrict $\psi'$ onto $Q^3$. 
The image $\psi'(Q^3)$
is a hyperplane $\P^3$, 
and we obtain a birational map $\psi: Q^3 \dto \P^3$ and a diagram
\eqref{eq:L-link-curves} with $C' = S \cap \P^3$.
This curve is a section of the projection $S \to \Jac^2(C)$,
hence $C' \simeq \Jac^2(C)$,
and $C' \subset \P^3$
is a degree five genus one curve.

We claim that $Q^3$ contains $\k$-lines. 
If $\k$ is a finite field, then this is true
for any smooth quadric in $\P^4$, as the quadratic form
in 5 variables over a finite field is isomorphic to a direct
sum of two copies of the hyperbolic plane and a one-dimensional
form, by the Chevalley--Warning theorem. 
Let us assume that $\k$ is infinite.
Under the map $\psi$, general lines on $Q^3$ correspond to general
conics in $\P^3$ which are five-secant to $C'$. Such conics
are parameterized by planes $\P^2 \subset \P^3$, and taking
a general 
plane $\P^2 \subset \P^3$
defined over $\k$ we get a $\k$-rational line on $Q^3$.

Finally \eqref{eq:L-link-curves}
satisfies the definition of L-link
because $[Q^3] = [\P^3]$ as $Q^3$ 
contains a line~\cite[Example 2.8]{KuznetsovShinder},
and
$Q^3$ 
is strongly rational
(see e.g. Lemma \ref{lem:GP-strongly-rational}).
\end{proof}

\begin{theorem}\label{thm:ell-curves}
Let $\k$ be a 
field
of one of the following types:
\begin{itemize}
    \item a number field,
    \item a function field over a field $\F$,
    where $\F$ can be
    \begin{itemize}
        \item an algebraically closed field, 
        \item a number field,
        \item a finite field. 
    \end{itemize}
\end{itemize}
Then we can choose a curve
$C$ such that \eqref{eq:L-link-curves} is motivically non-trivial.
Furthermore, 
the set $I$ of isomorphism classes of such curves has the cardinality of $\k$.
\end{theorem}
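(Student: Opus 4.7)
First, I would unpack what ``motivically non-trivial'' means. By Lemma~\ref{lemma:L-link}(2) applied to~\eqref{eq:L-link-curves}, with $m = m' = 2$, we have
\[
c(\psi) = [\P^1 \times C] - [\P^1 \times C'] \in \Z[\Bir_2/\k].
\]
Since both $C$ and $C' = \Jac^2(C)$ have geometric genus one, the $\P^1$-fibres are the only rational curves on $\P^1 \times C$ and $\P^1 \times C'$, so the $\P^1$-fibration is a birational invariant, and $\P^1 \times C$ is birational to $\P^1 \times C'$ if and only if $C \cong C'$ as $\k$-varieties. Thus the task reduces to producing, for each listed $\k$, a set $I$ of cardinality $|\k|$ of pairwise non-isomorphic genus-one curves $C/\k$ that carry a $\k$-rational divisor of degree five and satisfy $C \not\cong \Jac^2(C)$.

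Second, I would translate into Weil--Ch\^atelet language. Fix an elliptic curve $E/\k$ with $j(E) \notin \{0, 1728\}$, so that $\Aut_\k(E) = \{\pm 1\}$. Torsors under $E$ are classified by $H^1(\k, E)$; the class of $C$ is $\eta := [\Jac^1(C)]$, with $[\Jac^k(C)] = k\eta$, and two torsors yield isomorphic $\k$-varieties precisely when their classes agree up to the $\pm 1$-action. Existence of a $\k$-divisor of degree five amounts to $5\eta = 0$, while $C \cong \Jac^2(C)$ reads $\eta = \pm 2\eta$, i.e.\ $\eta = 0$ or $3\eta = 0$; combined with $5\eta = 0$, either condition forces $\eta = 0$. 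Hence the problem reduces to producing $|\k|$-many nonzero classes of order dividing five in $H^1(\k, E)$, possibly for varying $E$.

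Third, I would produce these classes via the Kummer sequence $0 \to E[5] \to E \xrightarrow{[5]} E \to 0$, which yields a surjection $H^1(\k, E[5]) \twoheadrightarrow H^1(\k, E)[5]$ with finitely generated kernel $E(\k)/5E(\k)$. After a controlled finite extension splitting $E[5]$ and adjoining $\zeta_5$, inflation--restriction exhibits $H^1(\k, E[5])$ as dominating the Kummer group $\k^\times/(\k^\times)^5$ (or an Artin--Schreier analogue in characteristic five). For each listed $\k$, this group has cardinality $|\k|$: this reflects the introduction's observation that $\k$ admits $|\k|$-many Galois extensions of degree five, built via class field theory for number fields and via divisors on the base curve for function fields. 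Dividing by the finite ambiguities --- the $\pm 1$-action and $E(\k)/5E(\k)$ --- preserves cardinality and yields the required set $I$.

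The principal obstacle I expect is the uniform treatment across the four listed field types, in particular function fields of characteristic five (where $\mu_5$ is infinitesimal, requiring an Artin--Schreier substitute for Kummer theory) and function fields over finite fields (where Lang's theorem vanquishes the cohomology of the constant field, so the $H^1$-classes must be manufactured from the geometry of the base curve). Ensuring that the $|\k|$-many distinct classes $\eta$ correspond to pairwise non-isomorphic curves $C$ is a final, routine bookkeeping step.
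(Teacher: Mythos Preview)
Your first two steps are correct and match the paper's reduction. The paper likewise reduces to $C \not\cong \Jac^2(C)$, citing \cite[Lemma~2.7]{ShinderZhang}; that lemma is exactly your Weil--Ch\^atelet computation (the paper excludes only $j = 1728$, but your extra exclusion of $j=0$ is harmless).

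Your third step has two related gaps. First, the equivalence ``$C$ has a $\k$-rational degree-five divisor $\Leftrightarrow 5\eta = 0$'' is false in general: $5\eta = 0$ is the \emph{period} condition (a $\k$-point on the scheme $\Pic^5_{C/\k}$), whereas Proposition~\ref{prop:elliptic} needs an honest $\k$-rational line bundle of degree five, i.e.\ the \emph{index} divides $5$; for genus-one curves these can differ by the Brauer obstruction. Second, inflation--restriction to a field splitting $E[5]$ goes the wrong way to manufacture classes in $H^1(\k, E[5])$: restriction is a map \emph{out} of the group you want to show is large. Both problems dissolve under the same device, which is precisely what the paper invokes (following Lang--Tate): choose $E$ with a $\k$-rational $5$-torsion point $P$. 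Then $\Z/5 \hookrightarrow E$ over $\k$ itself yields $H^1(\k, \Z/5) \to H^1(\k, E)[5]$ with no base change needed, and every torsor $C$ in the image admits a degree-$5$ morphism to $E/\langle P\rangle$, forcing index $\mid 5$. The paper packages this as a citation to \cite[Theorem~1.10]{ClarkLacy} for finitely generated $\k$, and to \cite[Theorem~7]{Lang-Tate} together with the Hilbertian property of $\k$ (to supply infinitely many cyclic degree-$5$ extensions feeding $H^1(\k,\Z/5)$) for function fields over algebraically closed fields. Your hands-on route becomes correct once you insert this correction; the cardinality bookkeeping, including varying $E$ when $\k$ is uncountable, then goes through as you outline.
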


Here by a function field, we mean
a function field of a positive-dimensional
$\F$-variety.

\medskip

Before we prove the theorem, we need a preliminary result 
on the arithmetic of elliptic curves.

\begin{lemma}\cite{Lang-Tate, ClarkLacy}
\label{lem:torsors}
For a field $\k$ 
as in Theorem \ref{thm:ell-curves},
there exist infinitely many 
genus one
curves $C$
without rational
points but having 
a divisor of degree $5$,
and such that $j(\Jac^0(C)) \ne 1728$.
The set $I$ of isomorphism classes of such curves has the cardinality of $\k$.
\end{lemma}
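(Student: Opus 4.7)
The plan is to translate the statement into Galois cohomology and then invoke the cited arithmetic inputs. I would identify each genus one curve $C$ over $\k$ with a pair $(E,\eta)$, where $E=\Jac^0(C)$ and $\eta=[C]\in H^1(\k,E)$ is its class in the Weil--Châtelet group, taken up to the action of $\Aut(E)$. Under this identification, $C$ has no $\k$-rational point iff $\eta\ne 0$, while $C$ carries a $\k$-rational divisor of degree $5$ iff the period of $\eta$ (its order in $H^1(\k,E)$) divides $5$; the passage from rational divisor classes to actual effective divisors being automatic by Riemann--Roch in genus one. The task therefore reduces to producing, for many elliptic curves $E/\k$ with $j(E)\ne 1728$, a nonzero element of $H^1(\k,E)[5]$.

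Next I would feed in the arithmetic input. For number fields, the Lang--Tate theorem~\cite{Lang-Tate} asserts that $H^1(\k,E)$ is infinite for every elliptic curve $E$ over $\k$, and since this group is torsion, its $5$-primary part is nonzero for suitable $E$; one may in addition arrange the period of a chosen class to equal $5$ by truncation of its $5$-power part. For each of the function field cases, the Clark--Lacy theorem~\cite{ClarkLacy} provides genus one curves of every prescribed period over any infinite, finitely generated field of the listed types, which directly supplies nonzero period-$5$ classes in $H^1(\k,E)[5]$ for an ample collection of $E/\k$.

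To reach the cardinality bound, I would vary the Jacobian. The $j$-line $\A^1(\k)\setminus\{1728\}$ has cardinality $|\k|$ and parametrizes admissible $j$-invariants; a quadratic (or higher-order) twisting argument applied to a single elliptic curve produced in the previous step yields $|\k|$-many pairwise non-isomorphic elliptic curves $E/\k$ with $j(E)\ne 1728$, each still carrying a nonzero class in $H^1(\k,E)[5]$ (one checks that the twist of the torsor is a torsor of the twisted curve of the same period). Since a birational isomorphism between smooth projective genus one curves induces an isomorphism of their Jacobians, curves with non-isomorphic Jacobians are themselves non-isomorphic, which delivers $|\k|$-many isomorphism classes of the required $C$.

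The main obstacle is the substantive arithmetic content, namely the systematic production of nontrivial period-$5$ Weil--Châtelet classes for many elliptic curves over each of the listed fields; this is precisely what is supplied by~\cite{Lang-Tate, ClarkLacy}. The remaining ingredients -- the dictionary between torsors and genus one curves, the control of period versus index in degree~$5$, the avoidance of $j=1728$ (a single point to remove), and the twisting step to upgrade from ``infinitely many'' to ``cardinality $|\k|$'' -- are routine once the key existence result is in hand.
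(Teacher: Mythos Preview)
Your reduction has a genuine gap: you conflate period and index. Having a $\k$-rational divisor of degree $5$ is equivalent to the \emph{index} of $C$ dividing $5$, not the period. For genus one curves one only knows $\mathrm{per}\mid\mathrm{ind}\mid\mathrm{per}^2$, so a class $\eta\in H^1(\k,E)$ of order $5$ may well correspond to a curve of index $25$, which carries no degree-$5$ divisor. Your Riemann--Roch remark only says that a degree-$5$ element of $\Pic(C)$ is effective; it does not produce such an element from the period condition. The cited inputs actually address this: the paper invokes \cite[Theorem 1.10]{ClarkLacy} (which controls the index directly) for infinite finitely generated fields over $\Q$ or a finite field, and \cite[Theorem 7]{Lang-Tate} for function fields over an algebraically closed field, where the hypotheses (non-trivial $5$-torsion in $E(\k)$, finiteness of $E(\k)/5E(\k)$, infinitely many disjoint cyclic degree-$5$ extensions via Hilbertianity) are verified and the conclusion is the existence of infinitely many torsors of \emph{index} $5$. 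Your allocation of the two references is essentially reversed, and your summary of Lang--Tate (``$H^1$ is infinite, hence the $5$-primary part is nonzero for suitable $E$'') does not by itself give index $5$.

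Your cardinality step is also shakier than it looks. Quadratic twists of $E$ all share the same $j$-invariant, so the sentence about the $j$-line is doing no work; and while the twist of a period-$5$ torsor is again period $5$ (your parenthetical check is fine), you have not shown the index is preserved under twisting, so the gap above reappears. The paper instead observes that $\k$ is uncountable only in the function-field-over-algebraically-closed case, and then varies the constant elliptic curve $E/\F$ over a set of cardinality $|\F|=|\k|$, applying the Lang--Tate input to each $E_\k$ separately; for countable $\k$, ``infinitely many'' already gives $|\k|$.
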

\begin{proof}
If $\k$ is an infinite 
finitely generated field over $\Q$ or over a finite field, 
then infinitely many
such curves exist by~\cite[Theorem 1.10]{ClarkLacy};
the condition $j(\Jac^0(C)) \ne 1728$ is not explicitly
checked in \cite{ClarkLacy}, however the elliptic
curves they use to produce torsors indeed satisfy this condition \cite[Section 4]{ClarkLacy}.

The remaining case is
a function field $\k$ over an algebraically
closed field $\F$. 
We apply~\cite[Theorem 7]{Lang-Tate}.
First of all since $\k$ is Hilbertian~\cite[Proposition 12.3.3, Theorem 13.4.2]{MR2445111},
it has infinitely many cyclic extensions
of degree $5$ which are
linearly disjoint in $\ol{\k}$~\cite[Corollary 16.2.7]{MR2445111}. 
In particular, these cyclic extensions are non-isomorphic.
Next we take any ordinary elliptic
curve $E/\F$
with $j(E) \ne 1728$ in $\F$.
Since $\F$ is algebraically closed,
$E(\F)$ has non-trivial $5$-torsion.
The base change $E_\k$ of $E$ to $\k$
has also non-trivial $5$-torsion
and $j(E_\k) \ne 1728$ in $\k$.
The group $E(\k) / 5E(\k)$
is finite by~\cite[Remark 1.3, Proposition 1.4]{ClarkLacy}. 
We thus conclude by~\cite[Theorem 7]{Lang-Tate}
that there exists infinitely many $E_\k$-torsors of index $5$.

Note that among the fields we consider, $\k$ is uncountable only if $\k$ is the function field over an algebraically closed field $\F$.
In this case $\F$ and $\k$ have the same cardinality.
As the set of elliptic curves $E/\F$
considered above has the same cardinality as $\F$,
the last statement follows.
\end{proof}

\begin{proof}[Proof of Theorem \ref{thm:ell-curves}]
Take the curve $C$ defined
by Lemma \ref{lem:torsors}
and set $C' = \Jac^2(C)$.
By \cite[Lemma 2.7]{ShinderZhang}
we have $C \not\simeq C'$
(the cited result assumes $\chr(\k) = 0$,
however the proof works without this assumption
since
there is no $\gs \in \Aut(E)$ of order $4$ when $j \ne 1728$
for any field
\cite[Proposition A.1.2]{SilvermanAEC}).
As $C$ and $C'$ are non isomorphic smooth curves, they are
not birational.
Hence the L-link~\eqref{eq:L-link-curves} is motivically non-trivial.
\end{proof}

\begin{remark}
The L-link \eqref{eq:L-link-curves} appears in the classification
of Fano threefolds of rank two by Mori--Mukai \cite[2) on page 117]{Mori-Mukai}.
It is a very interesting question whether
there exist other
nontrivial smooth projective
L-links between strongly rational varieties in dimension $3$. The ultimate
question in this direction
is to fully describe the image $c(\Bir(\P^3))$
using the Sarkisov link decomposition.
\end{remark}

\begin{remark}
Let $X \to B$ be an elliptic surface
with a multisection of degree five over $\k = \C$.
Let $Y = \Jac^2(X/B)$ be the relative Jacobian
of degree two divisors.
Then spreading out the construction
of Theorem \ref{thm:ell-curves},
one can show that
$[X] - [Y] \in \Image(c_{\P^3 \times B})$.
See~\cite{ShinderZhang}
for a detailed analysis when $X$, $Y$
are not birational in the case
of elliptic K3 surfaces.
\end{remark}

\subsection{K3 surfaces of degree $12$} 
\hfill

\begin{theorem}\label{thm:K3}
Let $\k \subset \C$.
There exist motivically 
non-trivial L-links 
\begin{equation}\label{eq:L-link-K3}
 \xymatrix{
    & & T \ar[dl]_{\Bl_{S_0}} \ar[dr]^{\Bl_{S'_0}} & & \\
    S_0 \ar@{^{(}->}[r] & \P^4 \ar@{-->}[rr]^\psi &  & \P^4 & \ar@{_{(}->}[l] S'_0\\
}
\end{equation}
where $S_0$ and $S'_0$ are projective
surfaces such that the minimal models $S$ and $S'$ of their normalizations
are non-isomorphic
K3 surfaces of geometric Picard
rank one and degree $12$, 
and $S'_\C$ is the unique Fourier--Mukai partner of $S_\C$.
Furthermore, 
the set $I$ of isomorphism classes of the K3 surfaces $S$ occurring in these L-links has the cardinality of $\k$.
\end{theorem}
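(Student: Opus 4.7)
The plan is to upgrade the Hassett--Lai L-link construction of \cite{HassettLai} from $\C$ to an arbitrary subfield $\k \subset \C$. Over $\C$, that construction starts from a general K3 surface $S$ of degree $12$ and Picard rank one, together with its unique Fourier--Mukai partner $S'$, and produces a birational map $\psi \colon \P^4 \dto \P^4$ whose exceptional centres $S_0$ and $S'_0$ have normalisations with minimal models $S$ and $S'$ respectively. The key geometric input is that such K3 surfaces arise as hyperplane sections of a certain Fano threefold $V$ of genus seven (degree $12$) which is rational and defined already over $\Q$. Hence the whole Hassett--Lai diagram descends from a $\Q$-geometric construction, and after base change to $\k$ the L-link axioms (L1)--(L3) are inherited from the complex case: $\cX = \cX' = \P^4$ is strongly rational with $\gamma = \id$, the equality $[\P^4] = [\P^4]$ is obvious, and irreducibility of the two exceptional divisors is geometric and stable under base change.

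The main obstacle is arranging that $S$ has geometric Picard rank one over $\k$. Over $\C$ this is true for a very general K3, but for $\k$ a countable field such as $\Q$ one must avoid a countable union of Noether--Lefschetz loci in the parameter space of hyperplane sections of $V$, and it is not a priori clear that enough $\k$-rational parameters remain. For this I would follow Terasoma's argument \cite{Terasoma}: restrict to a Lefschetz pencil of hyperplane sections of $V$ defined over $\k$, and exploit the irreducibility of the monodromy representation on vanishing cohomology together with a specialisation argument to produce a subset of $\k$-members of the pencil, of cardinality $|\k|$, consisting of K3 surfaces of geometric Picard rank one. The Fourier--Mukai partner $S'$ is then produced over $\k$ directly from the Hassett--Lai diagram, since $\psi$ is defined over $\k$ and $S'_0$ is read off as the closure of the indeterminacy locus of $\psi^{-1}$ in the second copy of $\P^4$.

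For each such $S$, motivic non-triviality of the resulting L-link is automatic. Over $\C$ the Fourier--Mukai partners $S_\C$ and $S'_\C$ are non-isomorphic K3 surfaces of Picard rank one, hence minimal and thus not birational; \emph{a fortiori} $S$ and $S'$ are not birational over $\k$, so Lemma~\ref{lemma:L-link}(2) gives $c(\psi) \neq 0$. The cardinality assertion $|I| = |\k|$ then follows because the Picard-rank-one members of the pencil form a $\k$-rational subset of $\P^1_\k$ of cardinality $|\k|$, while the map from this set to $\k$-isomorphism classes of K3 surfaces has fibres that are at most countable (each isomorphism class corresponds to a single moduli point, and the pencil maps non-constantly into the $19$-dimensional moduli space of degree $12$ K3 surfaces).
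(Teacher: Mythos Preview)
Your proposal is correct and follows essentially the same route as the paper: the Hassett--Lai construction, the descent via a $\Q$-rational Fano threefold $V \subset \P^8$ obtained as a linear section of $\OG_+(5,10)$, Terasoma's Lefschetz pencil argument for Picard rank one, and reading off $S'$ from the indeterminacy locus of $\psi^{-1}$. Two small points where the paper is more careful: first, the three points $\Sigma \subset S$ must themselves be defined over $\k$, and the paper secures this by checking that the incidence variety of triples of points together with a Lefschetz pencil is $\Q$-rational (so has dense $\Q$-points); second, to invoke Lemma~\ref{lemma:L-link}(2) you need $S$ and $S'$ not \emph{stably} birational, not merely not birational, and the paper supplies this via Corollary~\ref{cor-CYnSB} (non-isomorphic K-trivial varieties of Picard number one are not stably birational).
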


Theorem \ref{thm:K3}, over $\C$,
is the main result of Hassett and Lai~\cite{HassettLai}.
We start by summarizing their construction,
and then we explain how to extend it to an arbitrary
subfield $\k \subset \C$ using Lefschetz pencils.

\begin{theorem}[{\cite[Theorem 2.1, Theorem 3.1]{HassettLai}}]\label{thm-HL}
Let $(S,H)$ be a general polarized complex K3 surface with $H^2 = 12$
and let $\gS = \Set{ x_1,x_2,x_3 } \subset S$ be a general triple of points.
The linear system $|H|$ defines an embedding $S \hto \P^7$;
let $S_0 \subset \P^4$ be the proper transform
of $S$ under the projection $\P^7 \dto \P^4$
from the plane generated by $\gS$.
We have the following:
\begin{enumerate}
\item $S_0$ has three transverse double points,
and the normalization of $S_0$ is $\Bl_\gS S$.
\item The linear system of quartics containing $S_0$ cuts out
$S_0$ as a scheme, and
defines a birational map
$\psi: \P^4 \dto \P^4$.
\item 
The inverse
$\psi^{-1}: \P^4 \dto \P^4$
is constructed the same way,
starting from another K3 surface $S'$ of degree $12$
and three points $\gS' = \Set{ x_1', x_2', x_3' }$ on $S'$.
\item We have $\Bl_{S_0}\P^4 \simeq \Bl_{S'_0}\P^4$
and it resolves $\psi$,
where $S'_0$ the proper transform
of $S'$ under the projection $\P^7 \dto \P^4$
from the plane generated by $\gS'$.
\end{enumerate}
Furthermore, if $\Pic(S) = \Z \cdot H$, 
then $S'$ is the unique Fourier--Mukai partner of $S$ such that
$S \not\simeq S'$.
Finally, if $S \hto \P^7$ and $\gS \subset S$ are defined over a subfield $\k \subset \C$,
then both $\psi$ and the pair $(S',\gS')$ are defined over $\k$.
\end{theorem}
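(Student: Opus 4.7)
The plan is to treat the four enumerated assertions as a single geometric construction running through all of $(1)$--$(4)$, and then append the Fourier--Mukai identification and the descent to $\k$ as separate items.

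For $(1)$, Riemann--Roch and Kodaira vanishing give $h^0(S,H)=2+H^2/2=8$, so for generic $(S,H)$ the linear system $|H|$ embeds $S\hto\P^7$ as a projectively normal smooth surface of degree $12$. Let $\Pi\subset\P^7$ be the plane through $\gS$; projection from $\Pi$ defines a rational map $\pi:\P^7\dto\P^4$ whose restriction to $S$ has indeterminacy locus exactly $\gS$. Blowing up yields a morphism $\wt S:=\Bl_{\gS}S\to\P^4$ attached to the linear subsystem $|H-E_1-E_2-E_3|\subset|H|$. A genericity argument---the tangent planes $T_{x_i}S$ are transverse to $\Pi$, no four points of $S$ are coplanar with $\Pi$, and no two points of $S\setminus\gS$ are collinear with any $x_i$---shows that the induced morphism to the image $S_0\subset\P^4$ is birational; the only failure of injectivity comes from the three trisecant lines joining pairs of points of $\gS$ and meeting $S$ at one further point, producing three transverse double points on $S_0$. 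Since $\wt S$ is smooth and the map is finite outside these nodes, it realises the normalization $\wt S\to S_0$.

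For $(2)$, I would compute the degree of $S_0\subset\P^4$ as $H\cdot(H-E_1-E_2-E_3)-3=9$ via intersection theory on $\wt S$, and then compute $h^0(\P^4,\cI_{S_0}(4))$ using the ideal-sheaf sequence together with Riemann--Roch on $\wt S$ applied to the pull-back $4H-E_1-E_2-E_3$, obtaining a five-dimensional linear system $V$ of quartics through $S_0$. A local syzygy computation at the three nodes confirms that $V$ cuts out $S_0$ scheme-theoretically, and the rational map $\psi:\P^4\dto\P^4$ it defines is shown to be birational by producing an explicit rational inverse. For $(3)$--$(4)$, resolve $\psi$ by blowing up $S_0$ to form $T:=\Bl_{S_0}\P^4$; the strict transform of $V$ becomes base-point free on $T$ and yields a second morphism $T\to\P^4$ factoring $\psi$. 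I would analyse the exceptional locus of this second morphism and identify it as a surface $E'$ fibered over a subscheme $S'_0\subset\P^4$ with three transverse double points, with $E'\to S'_0$ the normalization obtained from a smooth K3 surface $S'$ blown up at three points $\gS'$. Reversing the construction of $(1)$ exhibits $S'_0$ as the projection of $S'\hto\P^7$ from the plane spanned by $\gS'$, so the symmetry of the diagram forces $T\simeq\Bl_{S'_0}\P^4$.

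For the Fourier--Mukai claim, I would interpret $S'$ intrinsically as a connected component of the moduli space of stable sheaves on $S$ with an appropriate Mukai vector determined by $H$ and $\gS$; assuming $\Pic S=\Z\cdot H$, Mukai's enumeration of Fourier--Mukai partners then yields exactly two candidates $\{S,S'\}$, and a transcendental-lattice calculation distinguishes them. The descent to $\k$ is formal: the embedding $|H|$, the plane $\Pi$, the projection $\pi$, the image $S_0$ and the linear system $V$ of quartics through $S_0$ are all produced by Galois-equivariant polynomial operations on the coefficients of $(S,H,\gS)\subset\P^7$, hence are defined over $\k$ whenever $(S,H,\gS)$ is. Consequently $\psi$ and the exceptional locus $S'_0$ of $\psi^{-1}$ are $\k$-rational, and $S'$ arises canonically from $S'_0$ by normalization followed by contraction of the three $(-1)$-curves over the nodes, an operation that commutes with base change. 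The principal obstacle is the content of $(3)$--$(4)$: proving that the ``hidden'' second contraction of $T$ really factors through the symmetric construction with a second K3 surface (rather than with some less structured threefold) is the heart of the matter, and this together with the Mukai-theoretic identification of $S'$ is where the detailed birational geometry of Hassett--Lai enters.
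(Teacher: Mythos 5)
The paper does not re-prove the Hassett--Lai geometry at all: its proof of this theorem is essentially a citation, with the bulk of the effort spent on the one sentence that is genuinely new, namely the descent to a subfield $\k\subset\C$.  Concretely, the paper attributes items (1)--(4) directly to \cite[\S1.1, Theorem 2.1]{HassettLai}, and for the statement ``$S'$ is the unique Fourier--Mukai partner with $S\not\simeq S'$'' it combines \cite[Theorem 3.1]{HassettLai} (which exhibits $S'$ as a moduli space of sheaves on $S$) with the derived invariance of the Picard number of a K3 surface and with \cite[Proposition 1.10]{OguisoFM}, which for a degree-$12$ K3 with $\Pic = \Z\cdot H$ gives exactly two FM partners.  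The descent argument is: $\psi$ is defined over $\k$ by construction, $S_0'$ is the fundamental locus of $\psi^{-1}$ and hence $\k$-rational, and $(S',\gS')$ is recovered from $S_0'$ by normalization, taking the minimal model and marking the images of the $(-1)$-curves, all of which commute with base change.

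Your proposal, by contrast, sketches the internal Hassett--Lai proof from scratch: Riemann--Roch for $|H|$, the geometry of the projection from $\Pi$, a degree count for $S_0$, the dimension of the quartics through $S_0$, and the resolution of $\psi$ by $\Bl_{S_0}\P^4$.  This is a different route: it reconstructs the input rather than citing it, and the trade-off is that it demands verification of several geometric claims that the paper never touches because \cite{HassettLai} has already done that work.  You are right to flag at the end that the identification of the second contraction of $T$ with a blown-up K3 (items (3)--(4)) and the moduli-space interpretation of $S'$ are the nontrivial part; as written these are announcements of what must be proved rather than proofs, so as a self-contained argument the proposal is incomplete exactly where you say it is.  Your descent argument is essentially the same as the paper's and is correct.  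Two small points: the degree of $S_0$ should come from $(\sigma^*H - E_1 - E_2 - E_3)^2 = 12 - 3 = 9$ on $\wt S$, not from ``$H\cdot(H-E_1-E_2-E_3)-3$'' (same numerical answer, but the formula as written is not the self-intersection of the projected hyperplane class), and for the FM uniqueness it is worth recording the specific ingredient the paper uses, namely that degree $12 = 2\cdot 6$ with $\rho=1$ yields exactly two FM partners by \cite[Proposition 1.10]{OguisoFM}, so that once $S'\not\simeq S$ the pair $\{S,S'\}$ exhausts the list.
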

\begin{proof}
The main statement follows from~\cite[\S1.1, Theorem 2.1]{HassettLai}.
The statement about $S \not\simeq S'$ follows from~\cite[Theorem 3.1]{HassettLai},
together with the derived invariance of Picard number for K3 surfaces 
and~\cite[Proposition 1.10]{OguisoFM}.
Finally, if $S \hto \P^7$ and $\gS \subset S$ are defined over $\k$,
then $\psi$ is defined over $\k$ by construction.
As $S_0'$ is the fundamental locus of $\psi^{-1}$,
$S_0'$ is defined over $\k$ as well.
Since $S'$ is the minimal model of the normalization of $S'_0$,
and $\gS' \subset S'$ is the image of the $(-1)$-curves,
we conclude that $(S',\gS')$ is defined over $\k$.
\end{proof}

Let $\k \subset \C$.
Let $S \subset \P^7$ be a K3 surface
of degree $12$ defined over $\k$ 
and $\Sigma \subset S$
a smooth subscheme of length three.
We say that $(S, \Sigma)$ is HL-admissible
if 
Theorem \ref{thm-HL}(1-4) holds
    for $S_\C$ and $\Sigma_\C$.

It is a construction going back to Mukai
that 
general 
K3 surfaces of degree $12$ can be obtained
as linear sections 
of a Grassmannian
$\OG_+(5,10) \subset \P^{15}$, see \cite[2.1]{HassettLai}.
Consider a general $3$-dimensional
linear section $V \subset \P^8$ over $\Q$
of this Grassmannian.
As $\OG_+(5,10)$, being a rational homogeneous variety, 
has dense $\Q$-points and since the linear section cutting off $V$ is general,
we can assume that $V$ is smooth and has a $\Q$-point. 
This implies that $V$ is $\Q$-rational~\cite[Theorem 1.1]{KuPrRatFano3}. 

Let $\P^6 \subset \P^8$ be a linear subspace
and
$\Sigma = \{ x_1, x_2, x_3 \} \subset V \cap \P^6$.
We call the 
pair 
$(\P^6, \gS)$ \emph{HL-admissible}
if the following conditions are satisfied:
\begin{enumerate}
    \item[(a)] The pencil $S_t$, $t \in \P^1$, 
    defined by intersecting
    $V$ with hyperplanes containing $\P^6$ is a Lefschetz pencil.
    \item[(b)] For one (hence for general) $t$, the pair
    $(S_t, \Sigma)$ is HL-admissible.
\end{enumerate}

\begin{lemma}\label{lem:HL-admis-pencil}
For a general $V$ as above, 
there exists an HL-admissible pair $(\P^6, \Sigma)$
defined over $\Q$.
\end{lemma}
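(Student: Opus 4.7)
\emph{Proof plan.} I propose a density argument combining $\Q$-rationality of a parameter space with openness of the HL-admissibility conditions over $\C$. Introduce the incidence variety over $\Q$
$$\cW := \{(P, x_1, x_2, x_3) \in \Gr(7, 9) \times V^3 : x_i \in V \cap P \text{ in linearly general position}\},$$
where $\Gr(7,9)$ parametrizes $6$-planes $P \subset \P^8$. The projection $\cW \to V^3$ is a Grassmannian bundle with fibers $\Gr(4, 6)$, parametrizing $\P^6$'s through the span $\langle x_1, x_2, x_3 \rangle \subset \P^8$. Since $V$ is $\Q$-rational, so is $V^3$; combined with rationality of Grassmannians, $\cW$ is $\Q$-rational, and in particular $\cW(\Q)$ is Zariski dense in $\cW_\C$.

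Next, I would verify that HL-admissibility cuts out a non-empty Zariski open subset $\cW^\circ \subset \cW_\C$. Condition (a) is the classical open condition that the line in $(\P^8)^\vee$ dual to $P$ meets the dual variety $V^\vee$ transversally at smooth simple nodes, which is open on $\Gr(7,9)$. For condition (b), the requirements (1)--(4) of Theorem \ref{thm-HL}---transverse double points on $S_0$, scheme-theoretic cut-out of $S_0$ by quartics, the existence of a reciprocal birational construction, and the isomorphism $\Bl_{S_0}\P^4 \simeq \Bl_{S_0'}\P^4$---are themselves open, so they define a Zariski open subset $\cU$ in the total space of the universal pencil over $\cW$ via the evaluation $((P, \Sigma), t) \mapsto (S_t, \Sigma)$; condition (b) asserts that $(P, \Sigma)$ lies in the image of $\cU$ under the flat, hence open, projection $\cW \times \P^1 \to \cW$, which is itself Zariski open. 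Non-emptiness of $\cW^\circ(\C)$ follows from Theorem \ref{thm-HL} together with Mukai's linear section construction: hyperplane sections of $V_\C$ sweep out a dense family of polarized K3s of degree $12$ in moduli, so a generic Lefschetz pencil on $V_\C$ contains a general such K3, and three general points on that fiber yield a point of $\cW^\circ(\C)$.

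Finally, since $\cW$ is geometrically irreducible (being $\Q$-rational), the non-empty open $\cW^\circ \subset \cW_\C$ is Zariski dense, and its intersection with the Zariski dense set $\cW(\Q)$ is non-empty, producing the desired HL-admissible pair $(\P^6, \Sigma)$ defined over $\Q$. \emph{The main obstacle} I anticipate is making condition (b) genuinely Zariski open: although Hassett--Lai phrase their result for a ``general'' polarized K3, one must inspect the proof of Theorem \ref{thm-HL} to extract explicit transversality and scheme-theoretic cut-out conditions, and to confirm that the ``very general'' Picard rank one hypothesis plays no role here (it enters only in the separate conclusion $S \not\simeq S'$, which is not part of HL-admissibility and will be handled independently by Terasoma-type arguments in the next step).
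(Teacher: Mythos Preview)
Your proposal is correct and follows essentially the same approach as the paper: define the incidence variety of triples $(x_1,x_2,x_3;\P^6)$, observe it is $\Q$-rational (via the Grassmannian-bundle projection to $V^3$), argue that HL-admissibility is a non-empty Zariski open condition, and conclude by density of $\Q$-points. The paper's proof is extremely terse---it simply asserts that conditions (a) and (b) are non-empty Zariski open and that the incidence variety is $\Q$-rational---so your more detailed treatment of the openness of (b) and the concern you flag about extracting explicit open conditions from Hassett--Lai are exactly the points the paper leaves implicit.
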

\begin{proof} 
As both conditions (a) and (b) are nonempty Zariski open conditions,
HL-admissible pairs form a Zariski dense 
open subset $U$
in the incidence
variety 
$$\cX \cnec \Set{(x_1,x_2,x_3; L) \in V^3 \times \Gr(\P^6,\P^8)
| x_1,x_2,x_3 \in V \cap  L}.$$
It is easy to see that $\cX$ is a $\Q$-rational variety,
hence the open subset $U \subset \cX$ has (dense)
$\Q$-rational points.
\end{proof}

\begin{proposition}\label{pro-K3HLPic1}
There exist infinitely many polarized K3 surfaces $(S,H)$ of degree $12$ over $\Q$
together with three $\Q$-points $\gS = \Set{x_1,x_2,x_3} \subset S(\Q)$
such that
\begin{enumerate}
    \item $\Pic(S_\C) \simeq \Z $;
\item $(S; \gS)$ is HL-admissible. 
\end{enumerate}
The same conclusion holds if $\Q$ is replaced by any subfield $\k$ of $\C$,
and the isomorphism classes of such K3 surfaces $S$
form a set $I$ which has the cardinality of $\k$.
\end{proposition}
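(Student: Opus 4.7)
The plan is to combine Lemma~\ref{lem:HL-admis-pencil} with Terasoma's specialization theorem to extract infinitely many K3 surfaces of the required type from a single Lefschetz pencil over $\Q$.

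Concretely, I would first apply Lemma~\ref{lem:HL-admis-pencil} to produce, over $\Q$, a smooth rational Fano threefold $V\subset\P^8$, three $\Q$-rational points $\gS=\{x_1,x_2,x_3\}\subset V$, and a linear subspace $\P^6\subset\P^8$ containing $\gS$, such that the pencil $\{S_t\}_{t\in\P^1_\Q}$ cut out on $V$ by hyperplanes through $\P^6$ is a Lefschetz pencil and $(S_t,\gS)$ is HL-admissible for generic $t$. Since $\gS\subset\P^6$ lies in every hyperplane of the pencil, each $S_t$ automatically contains $\gS$, so the task reduces to finding infinitely many $t\in\P^1(\Q)$ for which $(S_t,\gS)$ is HL-admissible \emph{and} $\Pic(S_{t,\C})\simeq\Z$.

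The next step is to ensure that the geometric generic fiber $S_{\bar\eta}$ of the pencil has Picard rank one. The parameter space of pairs $(\P^6,\gS)$ in Lemma~\ref{lem:HL-admis-pencil} is $\Q$-rational, so we may pick it generically enough that the moduli map $\P^1_\Q\to\MM_{12}$ of the pencil avoids the Noether--Lefschetz locus at its geometric generic point; equivalently, the monodromy acts without invariants on the primitive part of $H^2$. Applying Terasoma's theorem~\cite{Terasoma} for Lefschetz pencils of K3 surfaces over the Hilbertian field $\Q$ then exhibits the set of $t\in\P^1(\Q)$ with $\rank\Pic(S_{t,\bar\Q})\ge 2$ as a thin subset of $\P^1(\Q)$; outside it, $\Pic(S_{t,\bar\Q})\simeq\Z$, and for K3 surfaces this forces $\Pic(S_{t,\C})\simeq\Z$ as well.

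Intersecting this infinite set with the nonempty Zariski-open HL-admissibility locus produces infinitely many $S_t$ over $\Q$ with all the required properties, and the finiteness of the fibers of the induced moduli map $\P^1\to\MM_{12}$ yields infinitely many isomorphism classes. For a general subfield $\k\subset\C$, I would base-change the entire construction: the locus in $\P^1(\k)$ where either HL-admissibility fails or the Picard rank jumps lies in a countable union of thin or proper closed subsets, whose complement has cardinality $|\k|$, giving $|I|=|\k|$. The hardest part is arranging a Lefschetz pencil \emph{over $\Q$} whose geometric generic fiber has Picard rank one; this is precisely the content of Terasoma's monodromy-plus-Hilbert-irreducibility argument, and once it is secured the remaining steps are openness and base change.
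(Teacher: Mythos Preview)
Your approach is essentially the same as the paper's: both invoke Lemma~\ref{lem:HL-admis-pencil} to obtain a Lefschetz pencil over $\Q$, apply Terasoma's argument to produce infinitely many fibers of geometric Picard rank one, intersect with the open HL-admissibility locus, and handle uncountable $\k$ via a countable-union (``very general'') argument. The only minor differences are that the paper argues non-isotriviality directly via the invariant cycle theorem and $\Aut(S_{t,\C})=\{\Id\}$ rather than through finiteness of moduli fibers, and that your separate step of arranging the geometric generic fiber to have Picard rank one is already subsumed in Terasoma's monodromy argument once the pencil is Lefschetz.
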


\begin{proof} 
The Lefschetz pencil $\{S_t\}$ 
given by the HL-admissible pair from 
Lemma~\ref{lem:HL-admis-pencil}
is defined over $\Q$,
hence Terasoma's argument in~\cite[Step 2 and Step 3 in \S3]{Terasoma}
proving~\cite[Theorem 1]{Terasoma}
applies verbatim 
and shows that there exist infinitely many 
K3 surfaces among $\{S_t\}$ defined over $\Q$
(and thus over any subfield $\k \subset \C$) with 
geometric Picard rank $1$. 
If $\k$ is uncountable,
then since ``having geometric Picard rank $1$" is a 
very general property,
the set of K3 surfaces over $\k$ 
among $\{S_t\}$ for which (1) holds
is also uncountable.

We claim that
the Lefschetz pencil
$\{S_t\}$ is non-isotrivial.
This is because the monodromy action on $H^2(S_{t,\C}, \Q)$
is nontrivial by the invariant cycle theorem,
and $\Aut(S_{t,\C}) = \{\Id\}$ if $\rho(S_{t,\C}) = 1$~\cite[Corollary 15.2.12]{HuybK3book}, 
hence
monodromy does not act by holomorphic transformations.
Thus $\{S_t\}$ contains infinitely many
isomorphism classes of K3 surfaces for any $\k \subset \C$,
uncountably many if $\k$ is uncountable.
Finally, since 
the subset of K3 surfaces $S_t$ 
in $\{S_t\}$
for which $(S_t; \gS)$ is HL-admissible  
is open, 
Proposition~\ref{pro-K3HLPic1} follows.
\end{proof}

\begin{proof}[Proof of Theorem \ref{thm:K3}]
Let $S$ be a K3 surface over $\Q$ as in Proposition~\ref{pro-K3HLPic1}
and let $S'$ be as in Theorem \ref{thm-HL}.
In particular, $S_\C \not\simeq S'_\C$ 
and $S'_\C$
is a Fourier-Mukai
partner of $S_\C$.

Let $\psi \in \Bir(\P^4)$ be the map as in Theorem~\ref{thm-HL}. 
By Theorem~\ref{thm-HL},
both $\psi$ and the K3 surface $S'$ are defined over $\Q$.
Since $S_0$ has at worst transversal double points as singularities,
the exceptional divisor of $\Bl_{S_0} : T \to \P^4$
is birational to $S_0 \times \P^1$,
which is further birational to $S \times \P^1$
by Theorem~\ref{thm-HL}(1).
Similarly, 
the exceptional divisor of $\Bl_{S'_0} : T \to \P^4$
is birational to $S' \times \P^1$.
Thus,
$$c(\psi_\k) = [S_\k \times \P^1_\k] - [S'_\k \times \P^1_\k] \in \Z[\Bir/\k].$$
Since $S_\k$ and $S_\k'$ are non-isomorphic
K3 surfaces 
because $S_\C \not\simeq S'_\C$, 
they are not stably birational, 
see e.g. Corollary~\ref{cor-CYnSB}.
Hence $c(\psi_\k) \ne 0$,
so $\psi_\k$ is motivically non-trivial.
\end{proof}

\subsection{K-trivial
threefolds}\label{ssec-IMOU}
\hfill

\begin{theorem}\label{thm:CY3}
Let $\k$ be an infinite field.
There exist motivically non-trivial smooth projective L-links:
\begin{equation}\label{eq:L-link-CY3}
 \xymatrix{
    & & T \ar[dl]_{\Bl_{Z}} \ar[dr]^{\Bl_{Z'}} & & \\
    Z \ar@{^{(}->}[r] & \XX \ar@{-->}[rr]^\psi &  & \XX' & \ar@{_{(}->}[l] Z'\\
}
\end{equation}
where $\XX$ and $\XX'$ are five-dimensional
$G_2$-Grassmannians, and
$Z$ and $Z'$ are 
K-trivial
threefolds
of Picard rank $1$.
\end{theorem}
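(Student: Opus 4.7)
The plan is to adapt the Ito--Miura--Okawa--Ueda construction \cite{IMOUG2}, originally carried out over $\C$, to an arbitrary infinite field $\k$, then check the L-link axioms, and finally establish motivic non-triviality via Totaro's Theorem~\ref{thm:BurtCY} together with the separably rationally connected MRC-type statement of Appendix~\ref{app:MRC}.

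\emph{Step 1: Geometric setup over $\k$.} The split form of $G_2$ exists over any field, so the five-dimensional $G_2$-Grassmannians $\XX$ and $\XX'$ arise as $\k$-homogeneous spaces $G_2/P$ for the two maximal parabolics. The IMOU construction produces $Z \subset \XX$ and $Z' \subset \XX'$ as zero loci of sections of tautological bundles parametrised by an open subset $U$ of a Grassmannian of subspaces inside a fixed representation of $G_2$. Since $U$ is an open subset of a rational homogeneous variety and $\k$ is infinite, $U(\k)$ is Zariski dense in $U$; hence we may choose parameters over $\k$ so that $Z$, $Z'$ are smooth Calabi--Yau threefolds of Picard rank one and the birational map $\psi: \XX \dto \XX'$ induced by the IMOU construction is resolved by the common blow-up $T \to \XX, \XX'$. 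All ingredients of diagram~\eqref{eq:L-link-CY3} then descend to $\k$ by a spreading-out argument applied to the $\C$-statements in \cite{IMOUG2}.

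\emph{Step 2: Checking the L-link axioms.} The two $G_2$-Grassmannians are smooth and rational over $\k$ via the Bruhat decomposition of the split group, and both have the same class in $\Kt_0(\Var/\k)$, so (L1) holds. By Lemma~\ref{lem:GP-strongly-rational} each is strongly rational, hence strongly birational to the other, giving (L2). The exceptional divisors are birational to projective bundles over the irreducible threefolds $Z$ and $Z'$, so (L3) is automatic.

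\emph{Step 3: Motivic non-triviality.} By Lemma~\ref{lemma:L-link}, $c(\psi)=[\P^{m-1}\times Z]-[\P^{m'-1}\times Z']$, so we must show $Z$ and $Z'$ are not stably birational over $\k$. The key point is that $K_Z$ and $K_{Z'}$ are trivial, in particular nef, and $\rho(Z)=\rho(Z')=1$; hence by Theorem~\ref{thm:BurtCY} any birational map $Z \dto Z'$ would be an isomorphism. To rule out $Z \simeq Z'$ even over $\k$, I invoke the non-isomorphism over $\C$ from \cite{IMOUG2} (which is detected by a discrete invariant persisting under base change to $\C$), so $Z_\C \not\simeq Z'_\C$ and hence $Z \not\simeq Z'$.

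\emph{Step 4: From non-birationality to non-stable-birationality.} This is where the main technical obstacle lies, as in positive characteristic a K-trivial threefold can be uniruled and the classical MRC fibration is unavailable. The plan is to apply the separably rationally connected version of the MRC fibration from Appendix~\ref{app:MRC}: a stable birational isomorphism $Z\times\P^n \dashrightarrow Z'\times\P^{m}$ would descend to a birational isomorphism between the bases of the respective separably rationally connected quotients, and because $K_Z$ and $K_{Z'}$ are nef these bases are $Z$ and $Z'$ themselves. Combined with Theorem~\ref{thm:BurtCY} this again forces $Z \simeq Z'$, contradicting Step~3. Therefore $c(\psi) \ne 0$ and the L-link is motivically non-trivial.
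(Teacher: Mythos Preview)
Your overall strategy matches the paper's, but there is a genuine gap in positive characteristic. In Step~1 you write that the diagram ``descends to $\k$ by a spreading-out argument applied to the $\C$-statements in \cite{IMOUG2}'', and in Step~3 you deduce $Z \not\simeq Z'$ by base-changing to $\C$. Neither move is available when $\chr(\k) > 0$: one cannot spread out from $\C$ to a field of positive characteristic, and there is no base-change map from such a $\k$ to $\C$. Since the theorem is claimed for \emph{all} infinite fields, this is exactly the case that needs work.

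The paper remedies this by working over $\Z$ from the outset: the $G_2$-flag variety $G/B$, the line bundle $L$, and the rank-two bundle $E = p_*L$ are defined integrally, so that Bertini over an infinite field produces smooth $Z_\k$, $Z'_\k$ directly, and a theorem of Danilov together with Grothendieck--Lefschetz identifies $X_\k = Z(\sigma)$ with the single blow-up of $G_\k/P_\k$ along $Z_\k$. K-triviality of $Z_\k$ is established by lifting the section $\sigma$ to a characteristic-zero local ring (using $H^{>0}(G_\k/B_\k,L_\k)=0$ and Grauert base change), so that triviality of $\omega_{Z_K}$ specializes to triviality of $\omega_{Z_\k}$. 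Non-isomorphism is proved not by invoking the $\C$-result but by a numerical argument that survives specialization: the cubes of the ample generators of $\NS(Z_{\ol\k})$ and $\NS(Z'_{\ol\k})$ are $14$ and $42$ respectively (these intersection numbers are computed on the characteristic-zero lift and are constant in the flat family), hence $Z_\k \not\simeq Z'_\k$. Once this is in place, your Steps~2 and~4 are essentially the paper's argument, via Lemma~\ref{lem:GP-strongly-rational} and Corollary~\ref{cor-CYnSB}.
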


Here, a K-trivial variety is a smooth projective variety $X$
with $\go_X \simeq \cO_X$.

\medskip

The construction of Theorem \ref{thm:CY3}
is given by Ito--Miura--Okawa--Ueda~\cite{IMOUG2} 
over a field of characteristic zero. 
We explain that the construction extends
to an arbitrary infinite field.
The proof of Theorem \ref{thm:CY3} occupies
the rest of the section.

Let $G$ be the connected and simply connected 
split simple group scheme of type $G_2$ over $\Z$.
Let $B \subset G$
be the Borel subgroup, and
$P,P' \subsetneq G$ 
the two maximal parabolic subgroups of $G$ containing $B$.
The quotients $G/B$, $G/P$, and $G/P'$ 
are smooth and projective over $\Z$ 
by~\cite[Theorem 13.33]{MilneAlgGp} 
and~\cite[Corollary XXII.5.8.5]{SGA3}.
The natural projection $p : G/B \to G/P$
is also smooth. 
Since $p_\Q : G_\Q/B_\Q \to G_\Q/P_\Q$
is a $\P^1$-fibration, so is $p : G/B \to G/P$.
Likewise, the other projection
$p' : G/B \to G/P'$ is also a smooth $\P^1$-fibration.

Our assumptions on $G$ imply that
$X^*(G) = 0$ and 
$\Pic(G) = 0$~\cite[Remark VII.1.7.a)]{RaynaudAmple},
where $X^*(G)$ is the character group of $G$.
It follows that
$$\Pic(G/P) \simeq \X^*(P) \simeq \Z$$
by~\cite[Proposition VII.1.5]{RaynaudAmple}.
Similarly, $\Pic(G/P') \simeq \Z$.
Let $M$ and $M'$ 
denote the ample generators of $\Pic(G/P)$ and $\Pic(G/P')$
respectively.
The tensor product 
$$L \cnec p^*M \otimes p'^*M'$$
is then ample as well.
Since $\deg({L_\k}_{|F_\k}) = 1$ when $\chr(\k) = 0$
where $F_\k \simeq \P^1_\k$ is a fiber of $p_\k : G_\k/B_\k \to G_\k/P_\k$
(see e.g.~\cite{IMOUG2}),
the same holds true for any field $\k$.
It follows that 
$E \cnec p_*L$ is locally free of rank $2$ and
$G/B \simeq \P(E)$ over $G/P$.

Now let $\k$ be an infinite field
and let $\gs \in H^0(G_\k/B_\k,L_\k)$ be a general section. 
Let $X_\k = Z(\gs) \subset G_\k/B_\k$ and let
$Z_\k \cnec Z(p_*\gs) \subset G_\k/P_\k$ 
with $p_*\gs$
regarded as a section of $E_\k = (p_*L)_\k$.
Similarly, let $Z'_\k \cnec Z(p'_*\gs) \subset G_\k/P'_\k$.
Since $L_\k$ is very ample~\cite[Theorem 3]{RamRam} 
and $\k$ is infinite,
$X_\k$ is smooth  
by Bertini's theorem~\cite[Th\'eor\`eme 6.3]{JouanolouBertini},~\cite[Theorem 1.10]{MukaiK31820}.
Consider the commutative diagram
\begin{equation}\label{eq:G-P-diagram}
\xymatrix{
    & & X_\k \ar[ddl]_{\tau} \ar[ddr]^{\tau'}
    \ar@{->}[d] & & \\
    &  & G_\k/B_\k \ar[dl]\ar[dr] &  & \\
    & G_\k/P_\k \ar@{-->}[rr]^{\psi} 
    & & G_\k/P'_\k & 
}
\end{equation}
which defines the maps $\tau$ and $\tau'$.
By construction, 
$\tau$ maps $X_\k \bss \tau^{-1}(Z_\k)$
isomorphically onto its image
and $\tau^{-1}(Z_\k) \to Z_\k$ is a $\P^1$-fibration.
By~\cite[Theorem 2]{DanilovDecomp},
$\tau$ is thus a sequence of blow ups along smooth centers
of codimension $2$ lying above $Z_\k$.
Since $X_{\k}$ has Picard rank
$\rho(X_\k) = \rho(G_\k/B_\k) = 2$
by Grothendieck--Lefschetz' theorem,
necessarily $\tau$ is the blow up of $G_\k/P_\k$ along $Z_\k$
and $Z_\k$ is connected.
Similarly, $\tau'$ is the blow up of $G_\k/P'_\k$ along $Z'_\k$,
and $Z'_\k$ is connected.
We thus have a birational map 
$$\psi \cnec \tau' \circ \tau^{-1} : G_\k/P_\k \dto G_\k/P'_\k.$$

By construction, $\psi$ is regular away from $Z_\k$
and $\psi^{-1}$ is regular away from $Z'_\k$.

\begin{lem}\label{lem-nSBG2}
$Z_\k$, $Z'_\k$ are 
K-trivial threefolds,
which are not stably birational to each other.
\end{lem}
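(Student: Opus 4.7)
The lemma asserts two independent facts about the pair $(Z_\k, Z'_\k)$: that they are K-trivial threefolds, and that they are not stably birational. I would address these separately.

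\textbf{K-triviality.} The construction preceding the lemma already shows that $X_\k$ is a smooth hypersurface in $G_\k/B_\k$ cut out by a section of $L_\k$, and identifies $\tau : X_\k \to G_\k/P_\k$ as the blow-up of $G_\k/P_\k$ along the smooth connected center $Z_\k$; in particular $Z_\k$ has codimension $2$ in the $5$-dimensional ambient variety, is smooth of dimension $3$, and is cut out scheme-theoretically by a regular section of the rank-$2$ bundle $E_\k$. Adjunction then yields
$$\omega_{Z_\k} \simeq \bigl(\omega_{G_\k/P_\k} \otimes \det E_\k\bigr)\bigm|_{Z_\k}.$$
To show the right-hand side is trivial I would use the K-triviality of the hypersurface $X_\k$ (which follows from $\omega_{G/B} \simeq L^{-1}$, a root-theoretic identity for $G_2$ that may be verified over $\Q$ and then transferred to $\k$ by flatness of $\mathrm{Pic}$) together with the blow-up formula $K_{X_\k} = \tau^*K_{G_\k/P_\k} + E_{Z_\k}$ and the explicit relation $\tau^*\det E_\k = \mathcal{O}(E_{Z_\k})$. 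The argument for $Z'_\k$ is identical with $P$ replaced by $P'$.

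\textbf{Non-stable-birationality.} I would split on the characteristic of $\k$. When $\chr(\k) = 0$, stable birational equivalence is preserved under extension of the base field, so it suffices to treat $\k = \C$. There the construction coincides with that of Ito--Miura--Okawa--Ueda, and $Z_\C, Z'_\C$ are shown to be non-stably-birational in~\cite{IMOUG2}.

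When $\chr(\k) > 0$, I would argue by contradiction. Assume $Z_\k$ and $Z'_\k$ are stably birational. Both are K-trivial of Picard rank $1$, so their canonical classes are nef, and in particular they admit no non-trivial separably rationally connected quotient. The MRC-type statement from Appendix~\ref{app:MRC} then promotes a stable birational equivalence between such varieties to a genuine birational equivalence $Z_\k \dashrightarrow Z'_\k$. Theorem~\ref{thm:BurtCY} further promotes this to an isomorphism $Z_\k \simeq Z'_\k$. To conclude, I would lift the whole $G_2$-configuration to characteristic zero — the data $(G, P, P', L)$ are defined over $\Z$, and $\sigma$ lifts over a mixed-characteristic DVR $R$ with residue field $\k$ by the vanishing of $H^1(G/B, L)$ (Borel--Weil--Bott and semicontinuity) — obtaining smooth proper families $\mathcal{Z}, \mathcal{Z}'$ over $R$ whose special fibers are $Z_\k, Z'_\k$. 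A deformation-theoretic rigidity argument for isomorphisms between K-trivial Picard-rank-$1$ threefolds would then propagate the isomorphism $Z_\k \simeq Z'_\k$ to an isomorphism of generic fibers in characteristic zero, contradicting the result of~\cite{IMOUG2} via the previous case.

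The hardest step is the lift-and-deform argument in positive characteristic: one must ensure that an isomorphism between the special fibers genuinely extends over $R$, which amounts to control over the automorphism/isomorphism scheme of the family. A cleaner alternative, if available, would be a characteristic-free invariant — for instance an $\ell$-adic étale-cohomological invariant or one built from the derived category — that distinguishes $Z_\k$ from $Z'_\k$ directly over any infinite field and obviates the need to lift.
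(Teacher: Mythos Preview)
Your K-triviality argument via adjunction is a reasonable direct route and differs from the paper's: the paper instead lifts $\sigma$ to characteristic zero over a local ring $R$ with residue field $\k$ (using $H^j(G_\k/B_\k,L_\k)=0$ for $j>0$ and Grauert base change), invokes \cite{IMOUG2} for $\omega_{Z_K}$ trivial, and specializes. Either approach should work, though your claimed identity $\tau^*\det E_\k \simeq \mathcal{O}(E_{Z_\k})$ would need to be justified carefully.

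For non-stable-birationality, your reduction from ``stably birational'' to ``isomorphic'' via Appendix~\ref{app:MRC} and Theorem~\ref{thm:BurtCY} is exactly right and matches the paper (this is Corollary~\ref{cor-CYnSB}). The gap is in how you then derive a contradiction in positive characteristic. You propose to lift the hypothetical isomorphism $Z_\k \simeq Z'_\k$ to the generic fiber over a mixed-characteristic DVR and contradict \cite{IMOUG2}. But propagating an isomorphism from the \emph{special} fiber to the \emph{generic} fiber is the wrong direction for specialization and is not a standard rigidity statement; there is no reason two families with isomorphic special fibers should have isomorphic generic fibers, even for K-trivial Picard-rank-one varieties. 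You correctly flag this as the hardest step, and indeed it does not go through as stated.

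The paper's argument is precisely the ``cleaner alternative'' you gesture at in your last paragraph, and it is entirely elementary: one lifts \emph{intersection numbers} rather than isomorphisms. Concretely, the paper shows (over $\ol\k$) that the ample generators $H,H'$ of $\NS(Z_\k),\NS(Z'_\k)$ satisfy $H = M_\k|_{Z_\k}$ and $H' = M'_\k|_{Z'_\k}$, and then computes
\[
H^3 = c_1(M_\k)^3 \cdot [Z_\k] = c_1(M_K)^3 \cdot [Z_K], \qquad
H'^3 = c_1(M'_K)^3 \cdot [Z'_K],
\]
using that intersection numbers are constant in the flat family over $R$. By \cite{IMOUG2} these equal $14$ and $42$, hence $H^3 \ne H'^3$ and $Z_\k \not\simeq Z'_\k$. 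This avoids any deformation theory of isomorphisms.
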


\begin{proof}
When $\chr(\k) = 0$, Lemma~\ref{lem-nSBG2} is proven in~\cite{IMOUG2}.
Assume that $\chr(\k) = p > 0$.
Let $R$ be an integral local ring of characteristic zero with residue field $\k$
(see~\cite[Satz 20]{HasseSchmidt} or~\cite[Theorem 2.10]{HSbis} for the existence).
Since $H^j(G_\k/B_\k,L_\k)=0$ for all $j > 0$~\cite[Theorem 2]{RamRam} (and for any field $\k$),
we have
$$H^0(G_\k/B_\k,L_\k) \simeq H^0(G_R/B_R,L_R) \otimes_R \k$$
by Grauert's base change.
Thus the section $\gs \in H^0(G_\k/B_\k,L_\k)$
defining $Z_\k$
can be lifted to a section $\gs_K \in H^0(G_K/B_K,L_K)$
where $K \cnec \Frac (R)$, which is of characteristic zero.
This defines a lifting of $Z_\k$ to a smooth projective subvariety 
$Z_K \subset G_K/P_K$.  
Since $\go_{Z_K}$ is trivial, so is $\go_{Z_\k}$.
The same argument shows that $\go_{Z'_\k}$
is trivial.

To show that $Z_\k$, $Z'_\k$ are not stably birational,
we may assume $\k = \ol{\k}$. First
we show that $Z_{\k}$ and $Z'_{\k}$
are not isomorphic.
We have seen that $\rk\,\NS(X_{\k}) = 2$.
As $X_{\k} \to Z_{\k}$ and $X_{\k} \to Z'_{\k}$ are $\bP^1$-bundles,
necessarily $\rk\,\NS(Z_{\k})= \rk\,\NS(Z'_{\k}) = 1$.
Let $H$ (resp. $H'$) 
be the ample generator of $\NS(Z_{\k})$ (resp. $\NS(Z'_{\k})$).
Let $a, a' \in \Z_{>0}$ such that
$$c_1({M_{\k}}_{|Z_{\k}}) = aH \ \ \ \text{ and } \ \ \
c_1({M'_{\k}}_{|Z'_{\k}}) = a'H'.$$
As 
$$c_1({M_{\k}}_{|Z_{\k}})^3 = c_1(M_{\k})^3 \cdot Z_{\k} 
= c_1(M_K)^3 \cdot Z_K
\ \ \
\text{ and }  \ \ \ c_1({M'_{\k}}_{|Z'_{\k}})^3 = c_1(M'_{\k})^3 \cdot Z'_{\k} = c_1(M'_K)^3 \cdot Z'_K$$
are distinct and equal to 
$14$ or $42$ by~\cite{IMOUG2},
necessarily $a = a' =1$.
Hence $H^3 \ne H'^3$,
so $Z_{\k}$ is not isomorphic to $Z'_{\k}$.
It follows from Corollary~\ref{cor-CYnSB}
that $Z_\k$ is not stably birational to $Z'_\k$.
\end{proof}

\begin{proof}[Proof of Theorem \ref{thm:CY3}]
We set $\XX = G_\k/P_\k$
and $\XX' = G_\k/P'_\k$.
Both $\XX$ and $\XX'$ have a 
Bruhat decomposition
with a single cell in every dimension so that
$[\XX] = [\P^5] = [\XX']$.
Furthermore, $\XX$ and $\XX'$ are strongly
birational, because they are both
strongly rational by Lemma~\ref{lem:GP-strongly-rational}.
Thus diagram \eqref{eq:G-P-diagram} gives us the L-link
\eqref{eq:L-link-CY3}.
Finally, since  
$Z_\k$ and $Z'_\k$ are not (stably) birational by Lemma~\ref{lem-nSBG2},
the L-link~\eqref{eq:L-link-CY3} is motivically non-trivial.
\end{proof}

\begin{remark}
In the proof of Theorem~\ref{thm:CY3}, 
the only place where we need the assumption that
$\k$ is infinite is to guarantee 
the existence of $\gs \in H^0(G_\k/B_\k,L_\k)$
such that the vanishing loci $Z_\k \subset G_\k/P_\k$ and $Z'_\k \subset G_\k/P'_\k$
of $p_*\gs$ and $p'_*\gs$ 
are smooth. 
Based on generic smoothness,
the same conclusion of Theorem~\ref{thm:CY3} thus holds
for all but finitely many finite fields $\k$.
\end{remark}

\section{Applications to Cremona groups}
\label{sec:cremona}

Let $X/\k$ be a variety.

\begin{definition}\label{Def-reg}
A birational automorphism $\phi \in \Bir(X)$ is pseudo-regularizable
if $\phi = \alpha^{-1} \circ \gamma \circ \alpha$ where
\begin{itemize}
    \item $\alpha: X \dashrightarrow X'$ is a birational map
    (we do not assume $X'$ is smooth nor projective);
    \item $\gamma \in \Bir(X')$ is a pseudo-automorphism, namely
    an isomorphism in codimension one.
\end{itemize}

If moreover the map $\gamma$
is biregular, we call $\phi$ a regularizable map.
\end{definition}

Pseudo-regularizable maps have been
studied in \cite{Cantat-Cornulier, Lonjou-Urech}
in the context of the actions of Cremona
groups on various combinatorial complexes.

\begin{example} \label{ex-regbir} \hfill
\begin{enumerate}
    \item Regular automorphisms, or more generally
birational maps
$\phi \in \Bir(X)$ which have an invariant dense open subset
$U \subset X$, are regularizable.
\item Finite order elements of $\Bir(X)$ are regularizable.
Indeed, given such 
an element $\phi \in \Bir(X)$, the complement 
\[
U := X \setminus \bigcup_{k = 1}^{\ord(\phi)-1} \Ex(\phi^k),
\]
is an invariant dense open subset, and 
we apply (1) to conclude.
By Weil's regularization theorem (see also~\cite{Kraft}), 
they are even projectively regularizable,
namely there exists a regularization $\alpha : X \dto X'$
with projective $X'$.
\end{enumerate}
\end{example}

Let us write 
$\Bir(X)^{\psreg}$ for the normal subgroup
generated by pseudo-regularizable elements.

\begin{lemma}\label{lem:regularizable}
We have 
\[
\Bir(X)^{\psreg} \subset \Ker(c|_{\Bir(X)}).
\]
\end{lemma}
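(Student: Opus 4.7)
The plan is to exploit two structural features of $c$: it takes values in an abelian group, and it is built exclusively from codimension-one data. Because $\Z[\Bir_{n-1}/\k]$ is abelian, $\Ker(c|_{\Bir(X)})$ is automatically a normal subgroup of $\Bir(X)$. It therefore suffices to prove that every pseudo-regularizable element lies in this kernel; the normal closure $\Bir(X)^{\psreg}$ will then be contained in it for free.

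Before handling conjugation, I would record a ``strong'' groupoid property of $c$: for any birational map $\alpha : X \dashrightarrow X'$ between $n$-dimensional $\k$-varieties, one has $c(\alpha) + c(\alpha^{-1}) = 0$. This follows immediately from Lemma \ref{lem-homc} applied to $\alpha^{-1} \circ \alpha = \Id_X$, together with the observation that $c(\Id_X) = 0$ since $\Ex(\Id_X) = \emptyset$. Now take a pseudo-regularizable $\phi = \alpha^{-1} \circ \gamma \circ \alpha$ as in Definition \ref{Def-reg}, with $\gamma \in \Bir(X')$ a pseudo-automorphism. Two further applications of Lemma \ref{lem-homc} yield
\[
c(\phi) \;=\; c(\alpha) + c(\gamma) + c(\alpha^{-1}) \;=\; c(\gamma),
\]
so the problem reduces to verifying $c(\gamma) = 0$.

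This last vanishing is a direct unwinding of definitions: the defining property of a pseudo-automorphism is that both $\Ex(\gamma)$ and $\Ex(\gamma^{-1})$ have codimension at least two in $X'$. Consequently each of the two sums in \eqref{eq:def-c}, which run only over codimension-one points of the exceptional loci, is empty, and $c(\gamma) = 0$. There is no real obstacle here; the lemma is essentially a tautology recording that $c$, being assembled from codimension-one exceptional data, is blind to pseudo-automorphisms, and the passage from a single pseudo-regularizable element to the normal closure is automatic because the target group is abelian.
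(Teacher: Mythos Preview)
Your proof is correct and follows essentially the same argument as the paper: reduce to a single pseudo-regularizable element via normality of the kernel, apply additivity of $c$ to the conjugation, and observe that $c(\gamma)=0$ because a pseudo-automorphism has no codimension-one exceptional locus. Your write-up is in fact slightly more careful than the paper's, which blurs the distinction between pseudo-regularizable elements and the normal subgroup they generate; your explicit remark that the kernel is normal because the target is abelian makes this step transparent.
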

\begin{proof}
It suffices to show that
for any pseudo-regularizable element
$\phi \in \Bir(X)^{\psreg}$ we have
$c(\phi) = 0$.
By definition, for any such element 
we have $\phi = \alpha^{-1} \circ \gamma \circ \alpha$ where $\gamma$ is a pseudo-automorphism.
Thus $c(\gamma) = 0$ and 
\begin{equation*}
c(\phi) = -{c}(\alpha) + {c}(\gamma) + {c}(\alpha) = 0.
\end{equation*}
\end{proof}

\begin{theorem}\label{thm:bir-generation}
Assume that $X/\k$ is a variety
that
satisfies one of the following conditions:
\begin{enumerate}
    \item[(a)] $\k$ is a number field,
    or a function field over a number field,
    over a finite field, or over an algebraically closed field,
    and $X$ is birational to $\P^3 \times W$ for a
    separably rationally
    connected variety $W$
    (e.g. $X = \P^n$, $n \ge 3$).
    \item[(b)] $\k \subset \C$ is any subfield
    and $X$ is birational to $\P^4 \times W$ for a  rationally
    connected variety $W$
    (e.g. $X = \P^n$, $n \ge 4$).
    \item[(c)] $\k$ is any infinite
    field
    and $X$ is birational to $\P^5 \times W$ for a separably rationally
    connected variety $W$ 
    (e.g. $X = \P^n$, $n \ge 5$).
\end{enumerate}
We have $c|_{\Bir(X)} \ne 0$. 
As a consequence, 
$\Bir(X)$ is not generated by pseudo-regularizable elements. 
\end{theorem}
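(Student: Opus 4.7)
By Lemma~\ref{lem:regularizable}, it suffices to prove that $c|_{\Bir(X)}\neq 0$ in each of the cases (a), (b), (c). The plan is: (i) in each case, the appropriate theorem among Theorems~\ref{thm:ell-curves}, \ref{thm:K3}, \ref{thm:CY3} produces a motivically non-trivial L-link on a strongly rational variety $\cX$ of dimension $n\in\{3,4,5\}$; (ii) taking the product with $\id_W$ upgrades this to a birational self-map of $\cX\times W$ whose $c$-invariant remains non-trivial; (iii) conjugating by a birational isomorphism $\alpha\colon X\dto \cX\times W$ transfers non-vanishing to $\Bir(X)$.

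For step (i), in case (a) take $\cX=Q^3$ (strongly rational by Lemma~\ref{lem:GP-strongly-rational}) with the L-link~\eqref{eq:L-link-curves}; in case (b) take $\cX=\P^4$; in case (c) take $\cX$ to be a five-dimensional $G_2$-Grassmannian. In each case, composing the L-link $\psi$ with the strong birationality $\gamma$ provided by Definition~\ref{def:strong-birat}, Lemma~\ref{lemma:L-link}(2) yields $\phi_0\in\Bir(\cX)$ with
\[
c(\phi_0) = [\P^{m-1}\times Z_1] - [\P^{m'-1}\times Z_2],
\]
where $Z_1, Z_2$ are the L-link centers, and where $m=m'=2$ in all three constructions.

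For step (ii), the map $\phi_0$ factors as $j\circ i^{-1}$ with $i,j$ open inclusions; the product $\phi_0\times\id_W$ factors accordingly, and the irreducibility of $W$ together with the formula for $\ti{c}$ in Theorem~\ref{thm-Defc} gives
\[
c(\phi_0\times\id_W) = [\P^{m'-1}\times Z_2\times W] - [\P^{m-1}\times Z_1\times W] \in \Z[\Bir/\k].
\]
The crux is to show this is nonzero, i.e., that $Z_1\times W$ is not birational to $Z_2\times W$. Theorems~\ref{thm:ell-curves}, \ref{thm:K3}, \ref{thm:CY3} already establish that $Z_1$ and $Z_2$ are not stably birational --- being non-isomorphic smooth projective curves in case (a), non-isomorphic K3 surfaces in case (b), and non-isomorphic K-trivial threefolds of Picard rank one in case (c) (the latter via Theorem~\ref{thm:BurtCY}). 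To propagate the non-birationality through the product with $W$, one invokes the (separable) rational connectedness of $W$: in characteristic zero the MRC fibration of $Z_i\times W\times\P^N$ recovers the non-uniruled variety $Z_i$, so any stable birational equivalence $Z_1\times W\sim Z_2\times W$ would force $Z_1\simeq Z_2$; in positive characteristic, where $Z_i$ may itself be uniruled, the same conclusion is reached using the weaker MRC-type statement for separably rationally connected fibrations proved in Appendix~\ref{app:MRC}.

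For step (iii), set $\phi:=\alpha^{-1}\circ(\phi_0\times\id_W)\circ\alpha\in\Bir(X)$. Additivity of $c$ (Lemma~\ref{lem-homc}), together with $c(\alpha^{-1})=-c(\alpha)$, gives $c(\phi)=c(\phi_0\times\id_W)\neq 0$, so $c|_{\Bir(X)}\neq 0$, and Lemma~\ref{lem:regularizable} completes the proof. The main difficulty lies in step (ii): although cancelling a rationally connected factor from a stable birational equivalence is morally clear, there is no such formal cancellation rule available in $\Z[\StBir/\k]$, and one genuinely needs a geometric input --- the classical MRC theory in characteristic zero, and its separable analogue from Appendix~\ref{app:MRC} to cover the remaining small fields in (a) and the positive-characteristic setting of (c).
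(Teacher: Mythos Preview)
Your proof is correct and follows essentially the same approach as the paper: reduce via Lemma~\ref{lem:regularizable} to non-vanishing of $c$, manufacture $\phi_0$ from the L-links of Theorems~\ref{thm:ell-curves}, \ref{thm:K3}, \ref{thm:CY3} via Lemma~\ref{lemma:L-link}(2), take the product with $\id_W$, and use the results of Appendix~\ref{app:MRC} (what the paper packages as Corollary~\ref{cor-CYnSB}) to see that the birational classes $[Z_1\times\P^1\times W]$ and $[Z_2\times\P^1\times W]$ remain distinct. Two cosmetic points: your displayed formula for $c(\phi_0\times\id_W)$ has the sign flipped relative to your formula for $c(\phi_0)$, and strictly speaking the non-birationality you need is of $Z_1\times\P^1\times W$ and $Z_2\times\P^1\times W$ (not $Z_i\times W$), but since $\P^1\times W$ is again (separably) rationally connected your argument applies verbatim.
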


In particular, $\Bir(X)$ is not generated by 
any collection of elements from Example~\ref{ex-regbir}.

\begin{proof}
By Lemma~\ref{lem:regularizable}, 
it suffices
to explain that $c|_{\Bir(X)}$ is nonzero
for $\k$ and $X$ as in
the statement of the theorem.

Let us first assume that $W = \Spec(\k)$.
In case (a) (resp. (b) and (c)) we 
use the motivically non-trivial L-link constructed in Proposition~\ref{prop:elliptic} 
(resp. Theorem~\ref{thm:K3} and Theorem~\ref{thm:CY3}) ,
which together with 
Lemma~\ref{lemma:L-link} gives a map $\phi \in \Bir(\P^3)$
(resp. $\Bir(\P^4)$, $\Bir(\P^5)$)
such that 
$$c(\phi) = [Z \times \P^1] - [Z' \times \P^1] \ne 0 \in \Z[\Bir/\k].$$
where $Z$ and $Z'$ are non-isomorphic
K-trivial
varieties of Picard number $1$.
For an arbitrary $W$ as in Theorem~\ref{thm:bir-generation},
it then follows from Corollary~\ref{cor-CYnSB}
that
$$c(\phi \times \Id_W) = [Z \times \P^1 \times W] - [Z' \times \P^1 \times W] 
\ne 0 \in \Z[\Bir/\k].$$
This proves the non-vanishing of $c|_{\Bir(X)}$, and Theorem~\ref{thm:bir-generation} follows.
\end{proof}

\begin{corollary}\label{cor:homZ}
Let $X/\k$ be as in cases (a) or (b) in Theorem \ref{thm:bir-generation}.
Then we have a surjective homomorphism
$
\Bir(X) \to A
$
where $A = \bigoplus_J \Z$
is a free abelian group
with $J$ a set 
of the same cardinality as $\k$.
In particular, $A$ is a direct summand
of $\Bir(X)^{\ab}$.
\end{corollary}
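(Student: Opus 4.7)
The plan is to produce the required surjection by composing the motivic invariant
$c : \Bir(X) \to \Z[\Bir_{n-1}/\k]$, which by Lemma~\ref{lem-homc} is a homomorphism with values in a free abelian group, with a judicious projection onto a sub-basis indexed by a set of cardinality $|\k|$.

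First, I would invoke Theorem~\ref{thm:ell-curves} (in case (a)) or Theorem~\ref{thm:K3} (in case (b)) to extract a set $I$ of cardinality $|\k|$ of isomorphism classes of centers $Z$, each of which, via Lemma~\ref{lemma:L-link} and the product construction with $W$, yields an element $\phi_Z \in \Bir(X)$ with
\[
c(\phi_Z) = [Z \times \P^1 \times W] - [Z' \times \P^1 \times W] \ \in \ \Z[\Bir_{n-1}/\k],
\]
where $Z' = \Jac^2(Z)$ in case (a) and $Z'$ is the unique Fourier--Mukai partner of $Z$ in case (b); in either case $Z \not\simeq Z'$. Corollary~\ref{cor-CYnSB} (applied in the appendix) guarantees that non-isomorphic K-trivial, Picard rank one varieties give non-stably-birational (in particular non-birational) products with $\P^1 \times W$, so the symbols $[Z \times \P^1 \times W]$ for different $Z$ span distinct basis vectors of $\Z[\Bir_{n-1}/\k]$.

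The main obstacle is that the elements $e_Z - e_{Z'}$ need not be $\Z$-linearly independent, so a naive projection could have nontrivial kernel on the subgroup generated by them. To handle this, I would introduce the equivalence relation on $I$ generated by $Z \sim Z'$ (and $Z \sim Z''$ whenever $(Z'')' = Z$). Each equivalence class is finite: in case (a) iterating $\Jac^2$ corresponds to multiplication by $2$ in the Weil--Ch\^atelet group of $\Jac^0(Z)$, acting on the class of $[Z]$ which is of order $5$; in case (b), $(Z')' \simeq Z$ since $Z$ is the unique Fourier--Mukai partner of $Z'$, so classes have size at most $2$. Since $|I| = |\k|$ is infinite and each class is finite, the set of equivalence classes again has cardinality $|\k|$, and I would fix a system of representatives $J \subset I$. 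By construction, for every $Z \in J$ one has $Z' \notin J$.

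Finally, I would define
\[
\chi : \Bir(X) \xrightarrow{c} \Z[\Bir_{n-1}/\k] \twoheadrightarrow A := \bigoplus_{Z \in J} \Z,
\]
where the surjection is the projection onto the coordinates indexed by $\{[Z \times \P^1 \times W]\}_{Z \in J}$. Then $\chi(\phi_Z) = e_Z$ for every $Z \in J$, so $\chi$ is surjective onto $A$. The direct summand statement follows at once: because $A$ is a free abelian group and hence a projective $\Z$-module, the induced surjection $\Bir(X)^{\ab} \twoheadrightarrow A$ splits, realizing $A$ as a direct summand of $\Bir(X)^{\ab}$.
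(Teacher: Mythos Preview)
Your proof is correct and follows essentially the same approach as the paper's: compose $c$ with a projection of $\Z[\Bir_{n-1}/\k]$ onto a free summand indexed using the centers supplied by Theorems~\ref{thm:ell-curves} and~\ref{thm:K3}. The only cosmetic difference is that the paper observes the relation $Z \leftrightarrow Z'$ is an involution in both cases (so the partition is into pairs) and projects onto the span of the differences $[X_i \times \P^1 \times W] - [Y_i \times \P^1 \times W]$, whereas you allow finite equivalence classes and project onto the single coordinates $[Z \times \P^1 \times W]$ for a set of representatives $Z \in J$.
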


\begin{proof}
We use the same
L-links which appear in the proof of
Theorem~\ref{thm:bir-generation}.
By Theorem~\ref{thm:ell-curves}
in case (a), and Theorem~\ref{thm:K3}
in case (b),
these L-links are
parameterized by a set $I$ having the cardinality of $\k$.
 
Let $X_i$ and $Y_i$ be the centers of these links.
Since the $X_i$'s are mutually non-isomorphic $K$-trivial varieties of Picard rank one, their birational types are all distinct (see e.g.
Theorem~\ref{thm:BurtCY}).
Furthermore, $X_i$ and $Y_i$
are in a symmetric relation with each other.
Namely, in case (a), the relation
is $Y_i \simeq \Jac^2(X_i)$,
so that $X_i \simeq \Jac^2(Y_i)$,
because we work with index $5$ genus one curves, see \cite[2.2]{ShinderZhang}.
In case (b), the relation is:
$Y_{i, \C}$ is the 
unique nontrivial
Fourier--Mukai partner of $X_{i,\C}$ \cite[3.1]{HassettLai}.
Thus in either case,
$$J \cnec \Set{ \Set{X_i,Y_i} \subset I | i \in I}$$
forms a partition of $I$
and has the same cardinality as $\k$.

We let
$$A \cnec \bigoplus_{\Set{X_i,Y_i} \in J}\Z\left([X_i \times \P^1 \times W] - [Y_i \times \P^1 \times W]\right) \subset
\Z[\Bir/\k].$$
By construction this a direct summand 
of $\Z[\Bir/\k]$, and let $\pi: \Z[\Bir/\k] \to A$ be the  projection homomorphism.
For each $i$ there exists
$\phi \in \Bir(X)$ such that $$\pi(c(\phi)) = 
[X_i \times \P^1 \times W] - [Y_i \times \P^1 \times W].$$
Thus the composition
$\pi \circ c: \Bir(X) \to A$ is surjective.

The statement about the abelianization follows
because $\Bir(X)^{\ab}$ admits a surjective homomorphism onto the free $\Z$-module $A$.
This homomorphism admits a splitting, hence it is 
a projection onto a direct summand.
\end{proof}

\begin{remark}
(i) After the results of this paper were announced, Blanc, Schneider and Yasinsky have proved that for all $n \ge 4$, $\Cr_n(\C)$ 
admits a surjective homomorphism onto an uncountable free product of $\Z$
\cite[Theorem B]{BSY} (which is
a stronger statement
than what
the Corollary \ref{cor:homZ} says
for $X = \P^n$ and $\k = \C$), with a completely different method.

(ii) For $n = 3$ we do not know
if there exist nontrivial homomorphisms $\Cr_3(\C) \to \Z$
(or $\Cr_3(\ol{\Q}) \to \Z$).
Whether $\Cr_3(\C)$ is generated by involutions, or regularizable elements, is an 
outstanding open
question about Cremona groups.
\end{remark}

\appendix

\section{Stably birational geometry of K-trivial varieties}
\label{app:MRC}
\medskip

The following result is well-known in characteristic zero.
We present the proof 
which works over arbitrary fields as well as a more general statement (see Remark \ref{rem:burt}), both 
communicated to us by Burt Totaro.

\begin{theorem}[Totaro]\label{thm:BurtCY}
\label{pic1}
Let $X_1$ and $X_2$ be 
smooth projective varieties
over a field with a
birational map
$\phi:X_1 \dto X_2$.
Assume that $K_{X_1}$ and $K_{X_2}$ are nef.
Then $\phi$
is a pseudo-isomorphism.
If in addition 
 $\NS(X_1)_\Q$ 
 or $\NS(X_2)_\Q$
 is one-dimensional, then
$\phi$ is an isomorphism.
\end{theorem}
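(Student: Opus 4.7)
The plan is to prove the pseudo-isomorphism claim via a standard Negativity Lemma argument on a common resolution of $\phi$, and then to upgrade to an isomorphism under the Picard rank one assumption using an explicit projective embedding.

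First I would take $W$ to be a normal projective variety with proper birational morphisms $p \colon W \to X_1$ and $q \colon W \to X_2$ resolving $\phi$, for instance the normalization of the closure of the graph of $\phi$ in $X_1 \times X_2$; in characteristic zero one may further refine $W$ to be smooth via Hironaka. As $X_1$ and $X_2$ are smooth, both $p^*K_{X_1}$ and $q^*K_{X_2}$ are Cartier, and I would write
\[
K_W = p^* K_{X_1} + E_p = q^* K_{X_2} + E_q,
\]
where $E_p$ and $E_q$ are effective $\mathbb Q$-Weil divisors, $p$- and $q$-exceptional respectively. The discrepancy estimate over a smooth target forces the coefficient of any $q$-exceptional prime divisor $F \subset W$ in $E_q$ to be at least $\codim_{X_2}(q(F))-1 \geq 1$, and symmetrically for $E_p$.

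Next I would exploit that the Cartier difference $E_q - E_p = p^*K_{X_1} - q^*K_{X_2}$ satisfies, for every irreducible curve $C \subset W$ contracted by $q$,
\[
(E_q - E_p) \cdot C = K_{X_1} \cdot p_\ast C \geq 0,
\]
by the projection formula and the nefness of $K_{X_1}$. Decomposing $E_p = E_p' + E_p''$ into its $q$-exceptional and $q$-non-exceptional parts, a fiber-dimension count shows $E_p'' \cdot C \geq 0$ for every irreducible $q$-exceptional curve $C$, and hence $(E_q - E_p') \cdot C \geq 0$. The $q$-exceptional divisor $E_q - E_p'$ is therefore $q$-nef, so by the Negativity Lemma one gets $E_q \leq E_p'$. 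Now any $\phi$-exceptional prime divisor $D \subset X_1$ would have strict transform $\tilde D \subset W$ which is $q$-exceptional with $q(\tilde D)$ of codimension $\geq 2$ (forcing its coefficient in $E_q$ to be $\geq 1$) but not $p$-exceptional (since $p(\tilde D) = D$), contradicting $E_q \leq E_p'$. By symmetry $\phi^{-1}$ has no exceptional divisors either, so $\phi$ is a pseudo-isomorphism.

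For the Picard rank one refinement, a pseudo-isomorphism between smooth projective varieties induces an isomorphism on N\'eron--Severi groups, so we may assume $\rho(X_1) = \rho(X_2) = 1$; call $H_i$ the ample generator of $\NS(X_i)_{\mathbb Q}$. Let $U_1 \subset X_1$, $U_2 \subset X_2$ be the open subsets with codimension $\geq 2$ complements on which $\phi$ restricts to an isomorphism. The strict transform of $H_2$ under $\phi^{-1}$ defines a Weil, hence Cartier, divisor $D$ on $X_1$ (by smoothness), with $D \equiv c H_1$ for some $c \in \mathbb Q$. Testing against curves in $U_1$ gives $c > 0$, so $D$ is numerically ample, hence ample by Kleiman's criterion. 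For $m \gg 0$ the linear system $|mD|$ embeds $X_1$ into $\mathbb P^N$, and on $U_1$ it agrees with the composition $U_1 \simeq U_2 \hookrightarrow X_2 \hookrightarrow \mathbb P^N$ given by $|mH_2|$. Both $X_1$ and $X_2$ coincide with the projective closure of this common image in $\mathbb P^N$, so they are identified and $\phi$ extends to an isomorphism.

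The main technical hurdle will be making the discrepancy formalism run over arbitrary base fields, since in positive characteristic $W$ cannot in general be taken smooth. However, the coefficients of $E_p$ and $E_q$ at prime divisors of $W$ are valuative and well-defined on any normal model, the positivity of discrepancies over smooth targets is a purely valuative statement valid in all characteristics, and the Negativity Lemma holds for proper birational morphisms of normal varieties, so the argument goes through in general.
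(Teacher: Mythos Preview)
Your overall strategy matches the paper's: resolve $\phi$ by the normalized graph $W$ with projections $p,q$, write $K_W \sim_{\Q} p^*K_{X_1} + E_p \sim_{\Q} q^*K_{X_2} + E_q$, and appeal to the Negativity Lemma; for the Picard rank one refinement, identify section rings via the pseudo-isomorphism and take Proj. The second half of your argument is essentially the paper's.

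The gap is in your decomposition step. You split $E_p = E_p' + E_p''$ into $q$-exceptional and $q$-non-exceptional parts and assert that ``a fiber-dimension count shows $E_p'' \cdot C \ge 0$ for every irreducible $q$-exceptional curve $C$''. This is not justified: nothing prevents such a curve $C$ from lying inside a component $F$ of $E_p''$ (the map $q|_F$ is only generically finite and can have positive-dimensional fibers), in which case $F \cdot C$ can be negative. Worse, on the merely normal variety $W$ neither $E_p''$ nor $E_q - E_p'$ need be $\Q$-Cartier, so the intersection numbers you write down and the Negativity Lemma you invoke for $E_q - E_p'$ are not even well-defined; this is precisely the positive-characteristic difficulty you flag but do not resolve.

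The fix is to drop the decomposition entirely, as the paper does. The divisor $E_q - E_p = p^*K_{X_1} - q^*K_{X_2}$ \emph{is} $\Q$-Cartier. You already observed $(E_q - E_p)\cdot C \ge 0$ for every $q$-contracted curve $C$, so $-(E_p - E_q)$ is $q$-nef; since $q_*(E_p - E_q) = q_*E_p \ge 0$, the Negativity Lemma (Koll\'ar--Mori, Lemma~3.39, valid in any characteristic) gives $E_p \ge E_q$. The symmetric argument with $p$ yields $E_q \ge E_p$, hence $E_p = E_q$. Because the $X_i$ are smooth (hence terminal), every $p$-exceptional prime divisor appears with positive coefficient in $E_p$, and likewise for $E_q$; thus $p$ and $q$ have the same exceptional divisors and $\phi$ is a pseudo-isomorphism.
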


\begin{proof}
Let us show that $\phi$ is a pseudo-isomorphism, that is both
$\phi$
and $\phi^{-1}$ do not have any exceptional divisors.
We follow the proof of \cite[Corollary 3.54]{KollarMori}, which works
in any characteristic. Let $Y$ be the normalization of the closure
of the graph of $\phi$,
with birational morphisms $g_1\colon Y\to X_1$
and $g_2\colon Y\to X_2$. 
We have a linear equivalence of $\Q$-Weil divisors:
$$K_Y \sim_{\Q} g_i^*(K_{X_i})+Z_i$$
for $i=1,2$, where $Z_i$ is an effective $\Q$-divisor
whose support is the union
of all exceptional divisors of $g_i$ (because $X_i$ are smooth, hence terminal \cite[Claim 2.10.4]{KollarSingMMP}). 
So we have
$$g_1^*(K_{X_1})-g_2^*(K_{X_2})\sim_{\Q} Z_2-Z_1.$$
Here $g_1^*(K_{X_1})$ and $g_2^*(K_{X_2})$ are nef.
Applying the negativity lemma to $g_1 : Y \to X_1$
(resp. $g_2 : Y\to X_2$), we obtain that $Z_2-Z_1$
(resp. $Z_1-Z_2$) is effective \cite[Lemma 3.39]{KollarMori}.
(The negativity lemma holds in any characteristic,
because the proof uses resolution
of singularities only in dimension 2.)
Thus $Z_1=Z_2$ and so $g_1$
and $g_2$ have the same exceptional divisors, hence $\phi$ is a pseudo-isomorphism.

Thus $\phi^*$
gives an isomorphism
\[
\Pic(X_2) \simeq \Pic(X_1),
\]
which takes divisors
to effective divisors (but in general does not preserve ampleness).
Furthermore, 
we have
an induced
isomorphism $\NS(X_2) \simeq \NS(X_1)$ \cite[Example 19.1.6]{Fulton}.
Now assume that
$\NS(X_1)_\Q \simeq \NS(X_2)_\Q \simeq \Q$.
Since ampleness is a numerical condition, in this case
every nonzero
effective divisor is ample, in particular
$\phi^*$ 
takes ample divisors to 
ample divisors.
Take any ample
divisor 
$H_2 \in \Pic(X_2)$ 
and let $H_1 = \phi^*(H_2)$.
For 
every $m \ge 0$,
$\phi$ induces 
an isomorphism $H^0(X_1,\OO(mH_1)) \cong H^0(X_2,\OO(mH_2))$.
These are compatible with products,
that is we have a ring isomorphism
\[
\bigoplus_{m\geq 0} H^0(X_1,\OO(mH_1)) \simeq
\bigoplus_{m\geq 0} H^0(X_2,\OO(mH_2)).
\]
By taking Proj of both sides, it follows that $\phi$ is in fact
an isomorphism from $X_1$ to $X_2$.
\end{proof}

\begin{remark}\label{rem:burt}
The proof of Theorem~\ref{thm:BurtCY} 
goes through in the relative case: given proper morphisms of varieties over a field $f_1: X_1 \to S$ and $f_2: X_2 \to S$ 
with $X_1$ and $X_2$ smooth (or just $\Q$-Gorenstein
terminal),
assume 
that
$K_{X_i}$ is nef over $S$ for $i = 1, 2$.
Let $\phi: X_1 \dto X_2$
be a birational map over $S$.
Then $\phi$ is a pseudo-isomorphism.
\end{remark}

We need to recall some results about separable maps in positive characteristic.
A dominant map $\phi: X \dto Y$ between $\k$-varieties 
is called separable if the
corresponding field extension $\k(Y)/\k(X)$
is separable~\cite[\href{https://stacks.math.columbia.edu/tag/030I}{Tag 030I}]{stacks-project},
that is $\k(Y)$ is a finite
separable extension of a purely transcendental extension of
$\k(X)$. 
Equivalently,
$\k(X)$ is geometrically
reduced over $\k(Y)$~\cite[\href{https://stacks.math.columbia.edu/tag/05DT}{Tag 05DT}]{stacks-project}.
Recall that a variety $X$ is called separably uniruled
if there exists a 
separable
dominant map
$$Y \times \P^1 \dto X$$
for some variety $Y$ with $\dim Y = \dim X - 1$
(see e.g.~\cite[Definition IV.1.1]{Kollarrat}).
The following lemma should be well-known.

\begin{lem}\label{lem-defSU}
Let $X$ be a smooth complete variety over an algebraically closed field $\k$.
Suppose that there exist a variety $Y$ and
a separable dominant map
$$u: Y \times \P^1 \dto X$$
such that the proper transform of $\{y\} \times \P^1$ 
is not a point for general $y \in Y$.
Then $X$ is separably uniruled.
\end{lem}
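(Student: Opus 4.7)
The plan is to cut $Y$ down to the correct dimension by Bertini-style slicing, while preserving separability and dominance of the map to $X$. Set $n = \dim X$ and $d \cnec \dim Y - (n-1)$. Since $u$ is dominant, $d \ge 0$, and when $d = 0$ there is nothing to prove, so assume $d \ge 1$. The goal is to produce a subvariety $Y' \subset Y$ of dimension $n-1$ such that the restriction
\[
v : Y' \times \bP^1 \dashrightarrow X
\]
is dominant and separable; then $(Y', v)$ witnesses separable uniruledness of $X$ directly from the definition.

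I would take $Y'$ to be a general codimension-$d$ linear section of $Y$, after compactifying and choosing a projective embedding. For the dominance of $v$: for a generic $x \in X$, the fiber $F_x \cnec u^{-1}(x)$ has dimension $d$, and the hypothesis that $u|_{\{y\} \times \bP^1}$ is non-constant for generic $y$ forces the first projection $\pi : F_x \to Y$ to be generically finite onto its image, since the preimage of a point under a non-constant morphism $\bP^1 \to X$ is finite. Hence $\pi(F_x) \subset Y$ is $d$-dimensional. Letting $x$ vary produces a family of $d$-dimensional subvarieties of $Y$ whose total incidence variety in $X \times Y$ dominates $X$; a standard dimension count then shows that a general codimension-$d$ section $Y'$ meets $\pi(F_x)$ nontrivially for generic $x$, giving $x \in v(Y' \times \bP^1)$.

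The main obstacle will be showing that $v$ remains separable, since separability can be destroyed by restriction in positive characteristic. For this step I would work at the level of differentials: separability of $u$ means that $du : T_{Y \times \bP^1} \to u^*T_X$ is surjective at a generic point, and restricting to $Y' \times \bP^1$ replaces the $T_Y$-summand of the source with a general codimension-$d$ subspace while leaving the $T_{\bP^1}$-summand intact. The assertion is that for generic $Y'$ the restricted differential $dv$ is still surjective at a generic point of $Y' \times \bP^1$; this is a transversality statement in the spirit of the Bertini-type theorems of \cite[Th\'eor\`eme 6.3]{JouanolouBertini}, applied only in the $Y$-direction so that the $T_{\bP^1}$-contribution is preserved. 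Once $dv$ is surjective at a generic point, $v$ is smooth and in particular separable on a dense open, completing the proof.
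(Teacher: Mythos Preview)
Your approach has a genuine gap in the separability step: the Bertini-style transversality you sketch does not work in positive characteristic, because the $\P^1$-direction may lie in $\ker(du)$ at every point where $du$ is surjective, even though $u$ is separable and non-constant on the $\P^1$-fibers. Concretely, take $\chr(\k) = p > 0$, $X = \P^1$, $Y = \P^1$, and in affine coordinates $u(y,t) = y + t^p$. Then $\k(y,t) = \k(y + t^p, t)$ is purely transcendental over $\k(y+t^p)$, so $u$ is separable and dominant, and $u|_{\{y\}\times\P^1}$ is non-constant; yet $\partial u/\partial t \equiv 0$, so the $\P^1$-tangent direction lies in $\ker(du)$ everywhere. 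Cutting $Y$ down to any point $Y' = \{y_0\}$ yields $v(t) = y_0 + t^p$, which is purely inseparable. Thus no subvariety $Y' \subset Y$ of the correct dimension makes $v$ separable, and your strategy of restricting $u$ cannot succeed.

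The paper's proof avoids this by not attempting to preserve separability at all. Instead it invokes the free-curve criterion \cite[Theorem IV.1.9]{Kollarrat}: it suffices to exhibit a morphism $f:\P^1 \to X$ with $f^*T_X$ globally generated. After shrinking $Y$ so that $u$ is a smooth morphism on a dense open, one restricts to $\P^1_y = \{y\}\times\P^1$ for general $y$ and obtains a generically surjective map
\[
\cO_{\P^1_y}(2) \oplus \cO_{\P^1_y}^{\dim Y} \simeq T_{Y\times\P^1}|_{\P^1_y} \longrightarrow (u^*T_X)|_{\P^1_y} \simeq \bigoplus_i \cO(a_i).
\]
If some $a_i < 0$ then the $i$-th component map vanishes (no nonzero maps from $\cO(2)$ or $\cO$ to $\cO(a_i)$), contradicting generic surjectivity; hence all $a_i \ge 0$ and $u|_{\P^1_y}$ is free. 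In the counterexample above, $(u^*T_{\P^1})|_{\P^1_y} \simeq \cO(2p)$ is free even though $u|_{\P^1_y}$ is inseparable --- this is exactly the phenomenon your argument misses. Your transversality claim implicitly assumes $T_t\P^1 \not\subset \ker(du)$ at a general point, which need not hold.
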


\begin{proof}
By~\cite[Theorem IV.1.9]{Kollarrat},
it suffices to prove the existence of a
morphism
$f : \P^1 \to X$
which is free (this means that $f^*T_X$
is globally generated).
This is shown in~\cite[Lemma 1.2]{MR977778};
for the convenience of the reader, we reproduce the proof.
Since $\k$ is assumed
to be algebraically closed
(hence perfect), after shrinking $Y$, we can assume that $Y$ is smooth.
Therefore $u$ is a morphism outside of a locus of codimension two.
As such a locus does not dominate $Y$,
up to further shrinking $Y$, 
we can assume that $u$ is a morphism.

As $u$ is separable, by generic smoothness~\cite[\href{https://stacks.math.columbia.edu/tag/056V}{Tag 056V}]{stacks-project}
the tangent map $du : T_{Y \times \P^1} \to u^*T_X$
is surjective at a general point $(y,t) \in Y \times \P^1$.
Write $\P^1_y \cnec \{y\} \times \P^1$, then
$(du)|_{\P_y^1}$ is generically surjective.
Thus
we have 
a morphism of sheaves on $\P^1$
\[
\cO_{\P_y^1}(2) \oplus \cO_{\P_y^1}^{\dim Y} \simeq
T_{Y \times \P^1}|_{\P^1_y}  \longrightarrow 
(u^*T_X)|_{\P^1_y} \simeq
\bigoplus_{i = 1}^{\dim X} \cO(a_i)
\]
with torsion cokernel
and it follows immediately
that
all 
$a_i \ge 0$, so 
$(u^*T_X)|_{\P^1_y}$ 
is globally generated 
and $u|_{\P^1_y} : \P^1_y \to X$ is a free morphism.
\end{proof}

Recall that a variety $W$ is called separably rationally connected~\cite[IV.3.2.3]{Kollarrat}
if there exist a variety $S$ and a rational map
$u : S \times \P^1 \dto W$ such that the two-point evaluation map
\begin{equation}\label{eq:rat-conn}
    u_2 = u \times_S u : S \times \P^1 \times \P^1  \dto W \times W
\end{equation}
is dominant and separable.
Note that this condition implies that $u$ is dominant 
and separable.

\begin{lem}\label{lem-stbirCY} 
Let $Z$ and $Z'$ be smooth projective varieties 
of the same dimension
over 
an algebraically closed field $\k$.
Let $W$ be a separably rationally connected variety over $\k$.
Suppose that both $Z$ and $Z'$ are not separably uniruled.
Then for
every birational map
$\phi : Z \times W \dto Z' \times W$,
there is a unique birational map
$\ol{\phi} : Z \dto Z'$ making the diagram
\begin{equation}\label{eq-ZZW}
    \xymatrix{
Z \times W \ar[d] \ar@{-->}[r]^{\phi} & Z' \times W \ar[d] \\
Z  \ar@{-->}[r]^{\ol{\phi}} & Z'  \\
}
\end{equation}
commutative \textup(here, the vertical arrows are the
projections onto the first factors\textup).
\end{lem}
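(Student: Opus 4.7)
Let $\pi : Z' \times W \to Z'$ denote the first projection and $\pi_Z : Z \times W \to Z$ the first projection as well. Uniqueness of $\bar\phi$ is immediate from dominance of $\pi_Z$. For existence, the plan is to show that the composite $g \cnec \pi \circ \phi : Z \times W \dto Z'$ factors through $\pi_Z$, equivalently that $g_z \cnec g|_{\{z\} \times W} : W \dto Z'$ is constant for general $z \in Z$. Once this is established, the symmetric argument applied to $\phi^{-1}$ yields an inverse rational map $Z' \dto Z$, so $\bar\phi$ will automatically be birational.

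Suppose for contradiction that $g_z$ is non-constant for general $z \in Z$. Since $W$ is separably rationally connected, pick $u : S \times \P^1 \dto W$ as in \eqref{eq:rat-conn}, so that the two-point evaluation $u_2 : S \times \P^1 \times \P^1 \dto W \times W$ is dominant and separable; in particular so is $u$. Form the composition
\[
\wt g \cnec g \circ (\mathrm{id}_Z \times u) : Z \times S \times \P^1 \dto Z'.
\]
My aim is then to apply Lemma \ref{lem-defSU} with $Y = Z \times S$ to deduce that $Z'$ is separably uniruled, contradicting the hypothesis on $Z'$.

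The three hypotheses of Lemma \ref{lem-defSU} to check are dominance, separability, and non-constancy of $\wt g|_{\{(z,s)\} \times \P^1} = g_z \circ u_s$ for general $(z, s)$. Dominance is immediate from dominance of each factor in the composition. For non-constancy: since $g_z$ is non-constant for general $z$, one finds $w_1, w_2 \in W$ with $g_z(w_1) \neq g_z(w_2)$; dominance of $u_2$ then provides $s, t_1, t_2$ with $u(s, t_i) = w_i$, so $g_z \circ u_s$ takes distinct values on $\P^1$. A standard genericity argument in the irreducible variety $Z \times S$ upgrades this to the generic $(z, s)$.

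The main technical obstacle is verifying separability of $\wt g$. Its ingredients are: $\phi$ is separable since it is birational; $\pi$ is separable because $W$ is geometrically reduced over the perfect (in fact algebraically closed) field $\k$; $u$ is separable by assumption; base change preserves separability so $\mathrm{id}_Z \times u$ is separable; and composition of separable field extensions is separable (see e.g.~\cite[\href{https://stacks.math.columbia.edu/tag/030O}{Tag 030O}]{stacks-project}). Assembling these pieces is standard but requires care, particularly for non-algebraic extensions in positive characteristic. With separability in hand, Lemma \ref{lem-defSU} delivers the desired contradiction.
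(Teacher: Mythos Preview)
Your argument is essentially the same as the paper's: compose $p = \pi\circ\phi$ with the family $\mathrm{id}_Z\times u$ of rational curves coming from separable rational connectedness, and use Lemma~\ref{lem-defSU} together with the hypothesis that $Z'$ is not separably uniruled. The paper runs this directly rather than by contradiction: since $Z'$ is not separably uniruled, the composite contracts $\{(z,s)\}\times\P^1$ for general $(z,s)$, and then dominance of $u_2$ forces $p(z,-)$ to be constant for general $z$.

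One point in your write-up deserves tightening. From ``$g_z$ non-constant for general $z$'' you produce, via dominance of $u_2$, \emph{one} $s$ with $g_z\circ u_s$ non-constant, and then invoke a ``standard genericity argument'' to pass to general $(z,s)$. Existence of a single $s$ over each $z$ does not by itself give density in $Z\times S$; what you actually need is that for general $z$ the set of good $s$ is dense in $S$. This follows because the locus $\{(s,t_1,t_2): g_z(u(s,t_1))\ne g_z(u(s,t_2))\}$ is nonempty open in (the domain of) $S\times\P^1\times\P^1$, hence its projection to $S$ contains a dense open. The paper's direct formulation sidesteps this: once general curves are contracted, the fact that they connect general pairs of points (again by dominance of $u_2$) immediately gives constancy of $g_z$. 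The paper also spells out the graph construction for $\bar\phi$ and checks it has degree one, which you leave implicit in ``factors through $\pi_Z$''; this is standard but worth a sentence.
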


See~\cite[Theorem 2]{LiuSebag} for a similar result,
where 
a more general statement 
was proven in characteristic zero by Liu and Sebag.
Our proof follows a similar strategy
to that of \cite[Theorem 2]{LiuSebag},
relying on the statement analogous
to \cite[Lemma 5]{LiuSebag}.

\begin{proof}

Let 
$$p: Z \times W \dto Z' \times W \to  Z' $$
be the composition of $\phi$ with the projection.
Since $W$ is separably rationally connected,
there exist a variety $S$ and a rational map
$u : S \times \P^1 \dto W$ such that
the map $u_2$ \eqref{eq:rat-conn}
is dominant and separable.
Since $u$ is also dominant and separable,
so is the composition
$$\Phi: Z \times S \times \P^1 
\overset{\Id_Z \times u}{\dto} Z  \times W
\overset{p}{\dto} Z'.$$

Since $Z'$ is not separably uniruled, $\Phi$ contracts
$\{(z, s)\} \times \P^1$ for general $(z, s) \in Z \times S$.
In other words, $p(z,-): W \dto Z'$ 
contracts general rational curves of $W$ parameterized
by $S$ through $u$. 
Since these curves connect general points of $W$
because $u_2$ is dominant,
$p(z,-)$ contracts $W$ for general $z \in Z$. 

Let $\Gamma \subset (Z \times W) \times Z'$ be the closure of the graph
of $p$ and let $\ol{\Gamma} \subset Z \times Z'$ 
be the image of $\Gamma$.
The fiber of $\ol{\Gamma} \to Z$ over a general point 
$z \in Z$ is the point
$p(\{z\} \times W)$. 
Moreover, $\ol{\Gamma} \to Z$ has degree one,
because the composition of projections
$$(Z \times W) \times Z' \to Z \times Z' \to Z$$
induces the composition of surjective morphisms
$$\Gamma_w \cnec \Gamma \cap (Z \times \{w\} \times Z')
\to \ol{\Gamma} \to Z,$$
which is birational for general $w \in W$;
this is because $\Gamma_w$ is the graph of the rational
map $p|_{Z \times \{w\}}$.
Therefore $\ol{\Gamma}$
is the graph of a rational dominant map $\ol{\phi}: Z \dto Z'$,
and it makes~\eqref{eq-ZZW} commutative by construction.
It is clear from~\eqref{eq-ZZW} that $\phi$ uniquely determines $\ol{\phi}$.

To show that $\ol{\phi}$ is birational,
we apply the same construction
to $\phi^{-1}$, and by uniqueness
both compositions of
$\ol{\phi}$ with
$\ol{\phi^{-1}}$ are identity maps.
\end{proof}

\begin{cor}\label{cor-CYnSB} 
Let $Z$ and $Z'$ be K-trivial varieties 
of Picard number $1$
over 
an algebraically closed field $\k$.
If $Z \not\simeq Z'$,
then $Z \times W$ is not birational to $Z' \times W$
for any separably rationally connected variety $W$ over $\k$.
In particular, $Z$ is not stably birational to $Z'$.
\end{cor}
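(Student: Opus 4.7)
The plan is to reduce the statement to the combination of Lemma~\ref{lem-stbirCY} and Theorem~\ref{thm:BurtCY}. Suppose by contradiction that there is a birational map
\[
\phi: Z \times W \dto Z' \times W.
\]
The first goal is to show that $\phi$ descends to a birational map $\ol{\phi}: Z \dto Z'$. By Lemma~\ref{lem-stbirCY}, this follows once we know that $Z$ and $Z'$ are not separably uniruled. The second goal is then to upgrade $\ol{\phi}$ to an isomorphism using Theorem~\ref{thm:BurtCY}.

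First I would verify that a smooth projective K-trivial variety $X$ over an algebraically closed field cannot be separably uniruled. If it were, then by \cite[Theorem IV.1.9]{Kollarrat} (used in the proof of Lemma~\ref{lem-defSU}) there would exist a free morphism $f: \P^1 \to X$, i.e.\ $f^*T_X$ would be globally generated. Since the composition $T_{\P^1} \to f^*T_X$ coming from $df$ is generically injective (as $f$ is nonconstant), and $T_{\P^1} = \cO(2)$, a standard argument on $\P^1$ forces $f^*T_X$ to have degree at least $2$, hence $\deg f^*K_X \le -2$. This contradicts $K_X \sim 0$. Applying this to $Z$ and to $Z'$, neither is separably uniruled, so Lemma~\ref{lem-stbirCY} produces the desired birational map $\ol{\phi}: Z \dto Z'$ fitting into diagram~\eqref{eq-ZZW}.

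Now $Z$ and $Z'$ are smooth projective, have nef (in fact trivial) canonical class, and at least one of $\NS(Z)_\Q$ or $\NS(Z')_\Q$ is one-dimensional. Theorem~\ref{thm:BurtCY} then implies that $\ol{\phi}$ is an isomorphism, contradicting $Z \not\simeq Z'$. This proves the first assertion. For the final sentence, stable birationality of $Z$ and $Z'$ of equal dimension amounts to a birational map $Z \times \P^m \dto Z' \times \P^m$ for some $m \ge 0$; since $\P^m$ is separably rationally connected, the first part applies with $W = \P^m$.

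The main obstacle, and the only nontrivial ingredient beyond the quoted results, is the verification that K-trivial varieties are not separably uniruled in arbitrary characteristic; the rest is a routine application of Lemma~\ref{lem-stbirCY} followed by Theorem~\ref{thm:BurtCY}.
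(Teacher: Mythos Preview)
Your overall strategy matches the paper's: show that K-trivial varieties are not separably uniruled, invoke Lemma~\ref{lem-stbirCY} to descend $\phi$ to a birational map $\ol{\phi}:Z\dto Z'$, and then apply Theorem~\ref{thm:BurtCY} to conclude that $\ol{\phi}$ is an isomorphism. The paper handles the first step by simply citing \cite[Corollary~IV.1.11]{Kollarrat}, which says that a smooth projective separably uniruled variety has no nonzero pluricanonical forms, immediately contradicting $\omega_X\simeq\cO_X$.

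Your direct argument for that step has a gap in positive characteristic. The assertion that $df:T_{\P^1}\to f^*T_X$ is generically injective merely because $f$ is nonconstant is false in characteristic $p$: if $f$ factors through a power of the Frobenius of $\P^1$, then $df\equiv 0$. The repair is straightforward. Since $f^*T_X$ is globally generated and $\deg f^*T_X=-\deg f^*K_X=0$, one has $f^*T_X\simeq\cO_{\P^1}^{\dim X}$. If $df=0$, write $f=g\circ F_{\P^1}$; then $g^*T_X$ is again trivial (because $F_{\P^1}^*\cO(b)\simeq\cO(pb)$), so $g$ is again free and nonconstant. Iterating drops the degree of the map onto its image, so eventually one reaches a free nonconstant $h$ with $dh\neq 0$, and then a nonzero map $\cO(2)\to\cO^{\dim X}$ gives the contradiction. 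Alternatively, just cite \cite[Corollary~IV.1.11]{Kollarrat} as the paper does.
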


\begin{proof} 

Since K-trivial varieties are not separably uniruled~\cite[Corollary IV.1.11]{Kollarrat}, 
Theorem~\ref{thm:BurtCY}
and Lemma~\ref{lem-stbirCY} show that
$Z \times W$ is not birational to $Z' \times W$.
\end{proof}

\section*{Acknowledgments} 

We would like to thank 
J.\,Blanc,
P.\,Cascini,
F.\,Gounelas,
D.\,Huybrechts,
A.\,Kuznetsov,
\mbox{K.-W.\,Lai,}
S.\,Lamy,
K.\,Oguiso,
Z.\,Patakfalvi,
Yu.\,Prokhorov,
J.\,Schneider,
A. Shnidman~\cite{MO-Shnidman},
C.\,Shramov,
Y.\,Toda,
S.\,Zimmermann 
for their advice and interest in our work.
We thank B.\,Totaro for offering us a proof of Theorem~\ref{thm:BurtCY} which generalizes and replaces a  lemma from a previous draft.
We thank the referee for their comments and suggestions for
improving the paper.
This project was conceived when the second-named
author visited Osaka and Tokyo
in 2020, and we thank Yu.\,Kawamata,
K.\,Oguiso and S.\,Okawa for organizing and supporting this visit.

HYL is supported by
	Taiwan Ministry of Education Yushan Young Scholar Fellowship (NTU-110VV006),
	and Taiwan Ministry of Science and Technology (110-2628-M-002-006-).
	ES is supported by the EPSRC grant
    EP/T019379/1 ``Derived categories and algebraic K-theory of singularities'', and by the
    ERC Synergy grant ``Modern Aspects of Geometry: Categories, Cycles and Cohomology of Hyperkähler Varieties".

\bibliographystyle{plain}
\bibliography{cremona-paper}

\end{document}